\numberwithin{equation}{section}
\newtheorem{theorem}{Theorem}[section]
\newtheorem{proposition}[theorem]{Proposition}
\newtheorem{lemma}[theorem]{Lemma}
\newtheorem{conjecture}[theorem]{Conjecture}
\newtheorem{remark}[theorem]{Remark}
\theoremstyle{definition}
\newtheorem{defn}[theorem]{Definition}
\newcommand{\sign}{{\mathrm {sign}}}
\newcommand{\gr}{{\mathrm {gr}}}
\newcommand{\ann}{{\mathrm {ann}}}
\newcommand{\Stir}{{\mathrm {Stir}}}
\newcommand{\Hilb}{{\mathrm {Hilb}}}
\newcommand{\grFrob}{{\mathrm {grFrob}}}
\newcommand{\rev}{{\mathrm {rev}}}
\newcommand{\symm}{{\mathfrak{S}}}
\newcommand{\CC}{{\mathbb {C}}}
\newcommand{\RR}{{\mathbb{R}}}
\newcommand{\FF}{{\mathfrak{F}}}
\newcommand{\DD}{{\mathfrak{D}}}
\newcommand{\MMM}{{\mathcal{M}}}
\newcommand{\CCC}{{\mathcal{C}}}
\newcommand{\JJJ}{{\mathcal{J}}}
\newcommand{\HHH}{{\mathcal{H}}}
\newcommand{\AAA}{{\mathcal{A}}}
\newcommand{\BBB}{{\mathcal{B}}}
\newcommand{\RRRR}{{\mathfrak{R}}}
\newcommand{\III}{{\mathcal{I}}}
\newcommand{\NNNN}{{\mathfrak{N}}}
\newcommand{\ttheta}{{\bm \theta}}
\newcommand{\zz}{{\mathbf {z}}}
\newcommand{\xx}{{\mathbf {x}}}
\newcommand{\II}{{\mathbf {I}}}
\newcommand{\yy}{{\mathbf {y}}}
\newcommand{\aaa}{{\mathfrak {a}}}
\newcommand{\stair}{{\mathrm{st}}}
\newcommand{\VV}{{\mathbf {V}}}
\newcommand{\Gale}{{\mathrm {Gale}}}
\begin{document}

\title[The Hilbert series of the superspace coinvariant ring]
{The Hilbert series of the superspace coinvariant ring}

\author{Brendon Rhoades}
\address
{Department of Mathematics \newline \indent
University of California, San Diego \newline \indent
La Jolla, CA, 92093, USA}
\email{bprhoades@ucsd.edu}

\author{Andy Wilson}
\address
{Department of Mathematics \newline \indent
Kennesaw State University \newline \indent
Marietta, GA, 30060, USA}
\email{awils342@kennesaw.edu}

\begin{abstract}
Let $\Omega_n$ be the ring of polynomial-valued holomorphic differential forms on complex $n$-space, referred to in physics
as the superspace ring of rank $n$.
The symmetric group $\symm_n$ acts diagonally on $\Omega_n$ by permuting commuting and anticommuting generators simultaneously.
We let $SI_n \subseteq \Omega_n$ be the ideal generated by $\symm_n$-invariants with vanishing constant term and study the quotient
$SR_n = \Omega_n / SI_n$ of superspace by this ideal.
We calculate the doubly-graded Hilbert series of $SR_n$ and prove an `operator theorem' which characterizes the harmonic 
space $SH_n \subseteq \Omega_n$ attached to $SR_n$ in terms of the Vandermonde determinant and certain differential operators.
Our methods employ commutative algebra results which were used in the study of Hessenberg varieties.
Our results prove conjectures of N. Bergeron,  Li, Machacek, Sulzgruber, Swanson, Wallach, and Zabrocki.
\end{abstract}

\maketitle

\section{Introduction}
\label{Introduction}

Let $\xx_n = (x_1, \dots, x_n)$ be a list of $n$ variables and let $\CC[\xx_n]$ be the polynomial ring in these variables 
over $\CC$.
The symmetric group $\symm_n$ acts on $\CC[\xx_n]$ by subscript permutation; the fixed subspace
$\CC[\xx_n]^{\symm_n}$ is the algebra of {\em symmetric polynomials}. The {\em coinvariant ideal} $I_n \subseteq \CC[\xx_n]$
is the ideal $I_n := ( \CC[\xx_n]^{\symm_n}_+ )$ generated by the space $\CC[\xx_n]^{\symm_n}_+$ 
of symmetric polynomials with vanishing constant term and the {\em coinvariant ring}
$R_n := \CC[\xx_n] / I_n$
is the quotient of $\CC[\xx_n]$ by $I_n$.

The graded $\symm_n$-module $R_n$ is among the most important objects in algebraic combinatorics.
E. Artin proved \cite{Artin} that the `sub-staircase monomials' 
$\{ x_1^{a_1} \cdots x_n^{a_n} \,:\, a_i < i \}$ descend to a basis of $R_n$, so that 
$R_n$ has Hilbert series
\begin{equation}
\Hilb(R_n; q) = [n]!_q
\end{equation}
where we use the standard $q$-number and $q$-factorial notation
\begin{equation}
[n]_q := 1 + q + \cdots + q^{n-1} = \frac{1 - q^n}{1-q} \quad \quad \text{and} \quad \quad
[n]!_q := [n]_q [n-1]_q \cdots [1]_q. 
\end{equation}
Chevalley showed \cite{Chevalley} that $R_n \cong \CC[\symm_n]$ carries the regular representation of $\symm_n$
as an ungraded $\symm_n$-module and Borel showed \cite{Borel} that $R_n = H^{\bullet}(\mathrm{Fl}(n))$ presents the cohomology
of the type A complete flag variety.

Now let $\xx_n = (x_1, \dots, x_n)$ and $\yy_n = (y_1, \dots, y_n)$ be two sets of $n$ commuting variables and consider the polynomial
ring $\CC[\xx_n, \yy_n]$ over these variables with the diagonal action of $\symm_n$, viz.
\begin{equation}
w \cdot x_i := x_{w(i)} \quad \quad
w \cdot y_i := y_{w(i)} \quad \quad
(w \in \symm_n, \, \, 1 \leq i \leq n).
\end{equation}
Let $DI_n \subseteq \CC[\xx_n, \yy_n]$ be the ideal generated by the $\symm_n$-invariants with vanishing constant term.
Garsia and Haiman \cite{GH, HaimanQuotient} initiated the study of the {\em diagonal coinvariant ring}
\begin{equation}
DR_n := \CC[\xx_n, \yy_n] / DI_n.
\end{equation}
The quotient $DR_n$ is a doubly-graded $\symm_n$-module.
Haiman used the algebraic geometry of Hilbert schemes to prove \cite{Haiman} that $\dim DR_n = (n+1)^{n-1}$ and that,
as an ungraded $\symm_n$-module, the space $DR_n$ carries the sign-twisted permutation action of $\symm_n$ on size $n$ parking
functions. 
Carlsson and Oblomkov  used the Lusztig-Smelt paving of affine Springer fibers to give \cite{CO} a monomial basis 
of $DR_n$ which restricts to Artin's basis of $R_n$ when the $y$-variables are set to zero.

Next, let $\xx_n = (x_1, \dots, x_n)$ be a list of $n$ commuting variables and let $\ttheta_n = (\theta_1, \dots, \theta_n)$ be a list of $n$
anticommuting variables.
The {\em superspace} ring of rank $n$ is the tensor product
\begin{equation}
\Omega_n = \CC[\xx_n] \otimes \wedge \{ \ttheta_n \}
\end{equation}
of the polynomial ring in the $x$-variables and the exterior algebra over the $\theta$-variables.
This ring arises in physics, where the $x$-variables correspond to the states of bosons and the $\theta$-variables
correspond to the states of fermions; see e.g. \cite{PS}.
Accordingly, we shall refer to $x$-degree as {\em bosonic degree} and $\theta$-degree as {\em fermionic degree}.
The ring $\Omega_n$ also arises in differential geometry as the ring of polynomial-valued holomorphic differential forms on 
complex $n$-space (and we would write $dx_i$ instead of $\theta_i$); this explains our use of $\Omega$.

The symmetric group $\symm_n$ acts diagonally on superspace by the rule
\begin{equation}
w \cdot x_i = x_{w(i)} \quad \quad w \cdot \theta_i = \theta_{w(i)} \quad \quad (w \in \symm_n, \, \, 1 \leq i \leq n).
\end{equation}
Once again, we denote by $(\Omega_n)^{\symm_n}_+$ the subalgebra of invariant polynomials with vanishing constant term and 
consider the quotient ring
\begin{equation}
SR_n := \Omega_n / SI_n
\end{equation}
where the {\em supercoinvariant ideal} $SI_n \subseteq \Omega_n$ is given by
\begin{equation}
SI_n := \text{ideal generated by $(\Omega_n)^{\symm_n}_+$} \subseteq \Omega_n.
\end{equation}
Like $DR_n$, the quotient $SR_n$ is a bigraded $\symm_n$-module, this time with respect to bosonic and fermionic degree.

The study of $SR_n$ was initiated by the Fields Institute Combinatorics 
Group\footnote{Nantel Bergeron, Shu Xiao Li, John Machacek, 
Robin Sulzgruber, and Mike Zabrocki}
in roughly 2018.
This group conjectured that $\dim SR_n$ is the {\em ordered Bell number} counting ordered set partitions of $[n] := \{1, \dots, n \}$
and that, as an ungraded $\symm_n$-module, the quotient $SR_n$ carries the permutation action of $\symm_n$ on these 
ordered set partitions, up to sign twist.
Furthermore, this group conjectured that the doubly-graded $\symm_n$-structure of $SR_n$ was given by
\begin{equation}
\label{fields-conjecture}
\grFrob(SR_n; q, z) = \sum_{k = 1}^n z^{n-k} \cdot \Delta'_{e_{k-1}} e_n \mid_{t \rightarrow 0}
\end{equation}
where $q$ tracks bosonic degree, $z$ tracks fermionic degree, $e_n$ is the elementary symmetric function of degree $n$,
and $\Delta'_{e_{k-1}}$ is a {\em primed delta operator}
acting on the ring $\Lambda$ of symmetric functions; see \cite{HRW, Zabrocki} for more details. 
The identity \eqref{fields-conjecture} implies that the bigraded Hilbert series of $SR_n$ is given by
\begin{equation}
\label{fields-hilbert}
\Hilb(SR_n; q, z) = \sum_{k = 1}^n z^{n-k} \cdot [k]!_q \cdot \Stir_q(n,k)
\end{equation}
where the $q$-Stirling number $\Stir_q(n,k)$ is defined by the recursion
\begin{equation}
\Stir_q(n,k) = [k]_q \cdot \Stir_q(n-1,k) + \Stir_q(n-1,k-1)
\end{equation}
together with the initial condition
\begin{equation}
\Stir_q(0,k) = \begin{cases}
1 & k = 0 \\
0 & \text{otherwise}.
\end{cases}
\end{equation}
Equation~\eqref{fields-hilbert} was conjectured explicitly by Sagan and Swanson \cite[Conj. 6.5]{SS}.

The conjectures \eqref{fields-conjecture} and \eqref{fields-hilbert} were publicized at a BIRS meeting in January 2019.
This resulted in great excitement. Haglund, Rhoades, and Shimozono \cite{HRS} had introduced the quotient ring
\begin{equation}
R_{n,k} := \CC[\xx_n] / (x_1^k, x_2^k, \dots, x_n^k, e_n, e_{n-1}, \dots, e_{n-k+1})
\end{equation}
and had proven \cite{HRS2} that 
\begin{equation}
\grFrob(R_{n,k};q) = (\rev_q \circ \omega) \Delta'_{e_{k-1}} e_n \mid_{t = 0}.
\end{equation}
Pawlowski and Rhoades \cite{PR} introduced the moduli space $X_{n,k}$ of $n$-tuples of lines $(\ell_1, \dots, \ell_n)$
in $\CC^k$ such that $\ell_1 + \cdots + \ell_k = \CC^k$ and proved the cohomology presentation
\begin{equation}
H^{\bullet}(X_{n,k}) = R_{n,k}.
\end{equation}
The authors \cite{RWVan}  introduced the {\em superspace Vandermonde}
\begin{equation}
\delta_{n,k} := \varepsilon_n \cdot \left(
x_1^{k-1} \cdots x_{n-k}^{k-1} x_{n-k+1}^{k-1} x_{n-k+2}^{k-2} \cdots x_{n-1}^1 x_n^0 \times \theta_1 \cdots \theta_{n-k}
\right)
\end{equation}
and showed that the subspace $V_{n,k} \subseteq \Omega_n$ obtained by starting with $\delta_{n,k}$ and closing under 
the partial derivative operators $\frac{\partial}{\partial x_i}$ and linearity carries a graded $\symm_n$-action with 
graded character $\Delta'_{e_{k-1}} e_n \mid_{t = 0}$.
Of all of these models, the supercoinvariant ring $SR_n$ has the most intrinsic invariant-theoretic definition which extends to
arbitrary complex reflection groups $G \subseteq GL_n(\CC)$ in the most obvious way.

Zabrocki extended the conjecture \eqref{fields-conjecture} in a different direction
by introducing another set of commuting variables $\yy_n = (y_1, \dots, y_n)$ 
 and considering the triply-graded $\symm_n$-module obtained by quotienting
$\CC[\xx_n, \yy_n] \otimes \wedge \{ \ttheta_n \}$ by the ideal $I$ generated by  $\symm_n$-invariants with vanishing constant term.
Zabrocki conjectured \cite{Zabrocki} that 
\begin{equation}
\label{zabrocki-conjecture}
\grFrob \left(
\CC[\xx_n, \yy_n] \otimes \wedge \{ \ttheta_n \} / I ; q, t, z 
\right) = \sum_{k = 1}^n z^{n-k} \cdot \Delta'_{e_{k-1}} e_n
\end{equation}
where $q$ tracks $x$-degree, $t$ tracks $y$-degree, and $z$ tracks $\theta$-degree.
Observe that \eqref{zabrocki-conjecture} reduces to \eqref{fields-conjecture} if the $y$-variables are set to zero, and 
Haiman's theorem \cite{Haiman} when the $\theta$-variables are set to zero.
The conjecture \eqref{zabrocki-conjecture} was the first predicted algebraic model for $\Delta'_{e_{k-1}} e_n$;
the authors \cite{RWVan} gave a parallel conjectural model for $\Delta'_{e_{k-1}} e_n$ involving the superspace Vandermondes 
$\delta_{n,k}$.
The conjecture \eqref{zabrocki-conjecture} was extended to two sets of bosonic variables and two sets of fermionic variables by 
D'Adderio, Iraci, and Vanden Wyngaerd \cite{DIV} using $\Theta$-operators on symmetric functions;
the case of two sets of fermionic variables alone was solved by Iraci-Rhoades-Romero \cite{IRR} and Kim-Rhoades \cite{KR1};
see \cite{Kim, KR2} for a connection between this quotient and skein relations on set partitions.
F. Bergeron has a substantial family \cite{BergeronMulti, BergeronMultiSkew, Bergeron} of conjectures on coinvariant quotients
with multiple sets of bosonic and fermionic variables. 

Despite all of this activity, the equations \eqref{fields-conjecture} and \eqref{fields-hilbert} on the structure of $SR_n$
remained frustratingly conjectural.
The methods which were used to successfully analyze objects like $R_{n,k}, X_{n,k},$ and $V_{n,k}$ have not yet been extended
to study $SR_n$.
Swanson and Wallach \cite{SW1, SW2} proved that the $\sign$-isotypic component of \eqref{fields-conjecture} is correct,
 and that the fermionic degree $n-k$ piece of $SR_n$ has top bosonic degree $(n-k) \cdot (k-1) + {k \choose 2}$ 
 as predicted by \eqref{fields-hilbert}; this was the only significant progress on $SR_n$.
 In fact, before this paper, even the dimension of $SR_n$ was unknown.

In this paper we will prove that the formula \eqref{fields-hilbert} calculates the bigraded Hilbert series of $SR_n$
(Theorem~\ref{hilbert-series}). We will also prove 
(Theorem~\ref{superharmonic-space-characterization})
an `operator conjecture' of 
Swanson and Wallach \cite{SW2} which describes the harmonic space $SH_n \subseteq \Omega_n$ attached to the 
supercoinvariant ring $SR_n$ using certain `higher Euler operators' on $\Omega_n$  which act by polarization.\footnote{This 
characterization of $SH_n$ was conjectured earlier in unpublished work of N. Bergeron, S. X. Li, J. Machacek,
R. Sulzgruber, and M. Zabrocki.}
The space $SH_n$ is helpful for machine computations because $SH_n \cong SR_n$ as doubly-graded $\symm_n$-modules,
and yet members of $SH_n$ are honest superspace elements $f \in \Omega_n$ rather than cosets $f + SI_n \in SR_n$.
The $\symm_n$-module structure of $SR_n$, ungraded or (bi)graded, remains open.

We turn to a description of our methods.
The analysis of $R_{n,k}$ and its variations relied on the remarkably well-behaved Gr\"obner theory of its defining ideal
$(x_1^k, \dots, x_n^k, e_n, \dots, e_{n-k+1}) \subseteq \CC[\xx_n]$.
This facilitated multiple provable combinatorial bases \cite{GR, HRS, Meyer, PR} of $R_{n,k}$ from which its structure 
as a graded vector space or $\symm_n$-module could be studied.
There exists an extension of Gr\"obner theory to the superspace ring $\Omega_n$, but the Gr\"obner theory of the supercoinvariant
ideal $SI_n \subseteq \Omega_n$ has proven to be inscrutable.
Combinatorially, this has translated into a failure of using straightening arguments to
show that nice potential bases of $SR_n$ span this quotient ring.
Indeed, our approach does not prove the existence of any specific basis of $SR_n$. For a potential road from our methods
to an Artin-like basis of $SR_n$ conjectured by Sagan and Swanson \cite[Conj. 6.7]{SS}, see
Theorem~\ref{basis-recipe}, Conjecture~\ref{artin-conjecture}, and Proposition~\ref{artin-special-case}.

Since the direct analysis of $SR_n$ by means of a basis has proven elusive, we adopt an indirect approach which stands,
in a nutshell, on the elimination of fermionic variables. This allows us to trade supercommutative algebra problems in 
$\Omega_n$ for commutative algebra problems in $\CC[\xx_n]$, for which more tools have been developed.

For a given subset $J \subseteq [n]$,
we use a miraculous identity (Lemma~\ref{miracle-identity}) involving partial derivatives of complete homogeneous symmetric 
polynomials to deduce the existence of
a regular sequence $p_{J,1}, \dots, p_{J,n} \in \CC[\xx_n]$
 (Lemma~\ref{regular-sequence-lemma}) in $\CC[\xx_n]$.
 These regular sequences are used to prove (Proposition~\ref{upper-bound-dimension}) that the bigraded Hilbert series
 of $SR_n$ is bounded above by the expression \eqref{fields-hilbert}.

Next, we introduce a family $\DD_J$ of combinatorially defined
differential operators acting on $\Omega_n$ which are indexed by subsets $J \subseteq [n]$.
We prove (Lemma~\ref{f-product}) that the $\DD_J$ exhibit a triangularity property with respect to the Gale order on subsets 
$J \subseteq [n]$ with leading term given by the polynomial
\begin{equation}
f_J := \prod_{j \in J} x_j \left(
\prod_{i \, > \, j} (x_j - x_i)
\right) \in \CC[\xx_n].
\end{equation}
This leads to a general recipe (Theorem~\ref{basis-recipe}) for constructing bases of $SR_n$ from bases of the various
commutative quotient rings 
$\CC[\xx_n] / (I_n : f_J)$ by the colon ideals
\begin{equation}
(I_n : f_J) := \{ g \in \CC[\xx_n] \,:\, g \cdot f_J \in I_n \}.
\end{equation}
By identifying $(I_n : f_J)$ with the ideal $(p_{J,1}, \dots, p_{J,n})$ cut out by the regular sequence in $\CC[\xx_n]$ 
used to prove the upper bound on $\Hilb(SR_n;q,z)$ (Theorem~\ref{colon-ideal-identification}),
we are able to prove both the operator theorem characterizing the superharmonic space $SH_n$
(Theorem~\ref{superharmonic-space-characterization})
and the formula \eqref{fields-hilbert} for the bigraded Hilbert series of $SR_n$
(Theorem~\ref{hilbert-series}).

The rest of the paper is organized as follows. 
In {\bf Section~\ref{Background}} we give background material on superspace and commutative algebra.
In {\bf Section~\ref{Upper}} we bound the bigraded Hilbert series of $SR_n$ from above using regular sequences.
In {\bf Section~\ref{Differential}} we introduce the differential operators $\DD_J$ and relate them to the colon ideals $(I_n : f_J)$.
In {\bf Section~\ref{Operator}} we prove our main results: the operator theorem and the Hilbert series of $SR_n$.
We also present a conjecture for an Artin-like basis of $\CC[\xx_n] / (I_n : f_J)$ and prove this conjecture in a special case.
We close in {\bf Section~\ref{Conclusion}} with some open problems.

\section{Background}
\label{Background}

\subsection{Superspace}
As  in the introduction, the superspace ring 
$\Omega_n = \CC[\xx_n] \otimes \wedge \{ \ttheta_n \}$ is the tensor product 
of a symmetric algebra of rank $n$ and an exterior algebra of rank $n$,
both over  $\CC$.
A {\em monomial} in $\Omega_n$ is a nonzero product of the generators $\xx_n = (x_1, \dots, x_n)$ 
and $\ttheta_n = (\theta_1, \dots, \theta_n)$.
A {\em bosonic monomial} is a monomial which only involves the generators $\xx_n$ whereas a {\em fermionic monomial}
is a monomial which only involves the generators $\ttheta_n$.
For any subset $J \subseteq [n]$, we let $\theta_J$ be the product of the fermionic generators $\theta_j$
indexed by $j \in J$ in increasing order; we have a direct sum decomposition
\begin{equation}
\Omega_n = \bigoplus_{J \subseteq [n]} \CC[\xx_n] \cdot \theta_J.
\end{equation}

The {\em Gale order} $\leq_\Gale$ on subsets $J \subseteq [n]$ of the same cardinality will be used heavily. This partial order is defined 
by
\begin{equation}
\{ a_1 < \cdots < a_r \} \leq_\Gale \{ b_1 < \cdots < b_r \} \text{ if $a_i \leq b_i$ for all $i$.}
\end{equation}
This order will be used to compare fermionic monomials $\theta_J$ in the superspace ring $\Omega_n$.

The ring $\Omega_n$ may be identified with polynomial valued differential forms on $\CC^n$; as such, it carries
a plethora of derivative operators.  For $1 \leq i \leq n$, let $\partial_i: \CC[\xx_n] \rightarrow \CC[\xx_n]$ be the usual
partial differentiation with respect to $x_i$.
By acting on the first tensor factor of $\Omega_n = \CC[\xx_n] \otimes \wedge \{ \ttheta_n \}$,
 this extends to an action $\partial_i: \Omega_n \rightarrow \Omega_n$.
For $1 \leq i \leq n$, let $\partial^{\theta}_i: \wedge \{ \ttheta_n \} \rightarrow \wedge \{ \ttheta_n \}$ be the {\em contraction}
operator defined on fermionic monomials by
\begin{equation}
\partial^\theta_i: \theta_{j_1} \cdots \theta_{j_r} = \begin{cases}
(-1)^{s-1} \theta_{j_1} \cdots \widehat{\theta_{j_s}} \cdots \theta_{j_r} & \text{if $j_s = i$ for some $s$,} \\
0 & \text{otherwise}
\end{cases}
\end{equation}
for any distinct indices $1 \leq j_1, \dots, j_r \leq n$ where $\widehat{\cdot}$ denotes omission.
By acting on the second tensor factor of $\Omega_n = \CC[\xx_n] \otimes \wedge \{ \ttheta_n \}$,
we have a fermionic derivative operator $\partial^{\theta}_i: \Omega_n \rightarrow \Omega_n$.

We let $d: \Omega_n \rightarrow \Omega_n$ be the Euler operator of differential geometry defined by
\begin{equation}
d: f \mapsto \sum_{i = 1}^n \partial_i f \cdot \theta_i
\end{equation}
for all $f \in \Omega_n$. This operator lowers bosonic degree by 1 while raising fermionic degree by 1.
We will need `higher' versions $d_j: \Omega_n \rightarrow \Omega_n$
 $(j \geq 1)$ of these operators given by
\begin{equation}
d_j: f \mapsto \sum_{i = 1}^n \partial_i^j f \cdot \theta_i.
\end{equation}
The operator $d_j$ decreases bosonic degree by $j$ while raising fermionic degree by 1. We have 
$d_1 = d$. If $J = \{ j_1 < j_2 < \cdots \}$
is a set of positive integers, we write 
\begin{equation}
d_J := d_{j_1} d_{j_2} \cdots
\end{equation}
for the corresponding product of higher Euler operators.

Considering bosonic and fermionic degree separately, superspace $\Omega_n$ admits a bigrading
\begin{equation}
\Omega_n = \bigoplus_{i \geq 0} \bigoplus_{j = 0}^n (\Omega_n)_{i,j} \quad \text{ where } \quad 
(\Omega_n)_{i,j} = \CC[\xx_n]_i \otimes \wedge^j \{ \ttheta_n \}.
\end{equation}
The diagonal action of the symmetric group $\symm_n$ on $\Omega_n$ preserves this bigrading.
As in the introduction, we let $(\Omega_n)^{\symm_n}$ be the fixed subalgebra for this action.

Let $I \subseteq \Omega_n$ be a bihomogeneous ideal in superspace (such as $SI_n$). Analysis of the quotient ring
$\Omega_n/I$ is often complicated by the fact that its elements $f + I$ are cosets rather than  superspace elements $f \in \Omega_n$.
The theory of {\em (superspace) harmonics} is a powerful technique for replacing cosets with honest elements of superspace.
We turn to a description of this method.

The partial derivative operators $\partial_i, \partial^\theta_i: \Omega_n \rightarrow \Omega_n$ satisfy the relations
\begin{equation}
\partial_i \partial_j = \partial_j \partial_i \quad \quad
\partial_i \partial^{\theta}_j = \partial^{\theta}_j \partial_i \quad \quad
\partial^{\theta}_i \partial^{\theta}_j = - \partial^{\theta}_j \partial^{\theta}_i
\end{equation}
for all $1 \leq i, j \leq n$.  Since these are the defining relations of $\Omega_n$, for any superspace element 
$f = f(x_1, \dots, x_n, \theta_1, \dots, \theta_n) \in \Omega_n$ we get an operator 
\begin{equation}
\partial f = f(\partial_1, \dots, \partial_n, \partial^{\theta}_1, \dots, \partial^{\theta}_n): \Omega_n \rightarrow \Omega_n
\end{equation}
by replacing each $x_i$ in $f$ with the bosonic derivative $\partial_i$ and each $\theta_i$ in $f$ with the fermionic 
derivative $\partial^{\theta}_i$. This leads to an action of superspace on itself given by
\begin{equation}
\odot: \Omega_n \times \Omega_n \rightarrow \Omega_n \quad \quad
f \odot g := (\partial f)(g).
\end{equation}
The $\odot$-action gives $\Omega_n$-module structure on $\Omega_n$.

We use the $\odot$-action to construct an inner product on $\Omega_n$ as follows. 
Let $\overline{\cdot}: \Omega_n \rightarrow \Omega_n$ be the conjugate-linear involution which fixes all bosonic monomials,
satisfies $\overline{ \theta_{i_1} \cdots \theta_{i_r} } = \theta_{i_r} \cdots \theta_{i_1}$ for all fermionic monomials
$\theta_{i_1} \cdots \theta_{i_r}$, and sends any scalar $c \in \CC$ to its complex conjugate $\overline{c}$.
The pairing
\begin{equation}
\langle -, - \rangle: \Omega_n \times \Omega_n \rightarrow \Omega_n \quad \quad 
\langle f, g \rangle := \text{constant term of $f \odot \overline{g}$}. 
\end{equation}
is easily seen to be an inner product, with the monomials $\{ x_1^{a_1} \cdots x_n^{a_n} \cdot \theta_I \}$ forming an
orthogonal (but not orthonormal) basis.

Now suppose $I \subseteq \Omega_n$ is a bihomogeneous ideal. We have the equality 
\begin{equation}
I^{\perp} = \{ g \in \Omega_n \,:\, f \odot g = 0 \text{ for all $f \in I$} \}
\end{equation}
of subspaces of $\Omega_n$, where $I^{\perp}$ is calculated with respect to the above inner product.
The subspace $I^{\perp} \subseteq \Omega_n$ is the {\em harmonic space} attached to $I$.
We have a direct sum decomposition $\Omega_n = I \oplus I^{\perp}$ and an isomorphism of bigraded vector spaces
$\Omega_n/I \cong I^{\perp}$.  If $I$ is $\symm_n$-stable, the isomorphism 
$\Omega_n/I \cong I^{\perp}$ is also an isomorphism of bigraded $\symm_n$-modules.
The harmonic model $I^{\perp}$ of $\Omega_n/I$ is  useful because its members are honest superspace elements rather than cosets.

We close this subsection with a combinatorial identity due to Sagan and Swanson which will be useful in our analysis
of $SR_n$.  For a subset $J \subseteq [n]$, we define the {\em $J$-staircase} to be the sequence
$\stair(J) = (\stair(J)_1, \dots, \stair(J)_n)$
where
\begin{equation}
\stair(J)_1 := \begin{cases}
0 & 1 \in J \\
1 & 1 \notin J
\end{cases}
\end{equation}
and 
\begin{equation}
\stair(J)_{i+1} := \begin{cases}
\stair(J)_i & i+1 \in J \\
\stair(J)_i + 1 & i+1 \notin J.
\end{cases}
\end{equation}
For example, if $n = 7$ and $J = \{ 3,5,6\}$ we have $\stair(J) = (\stair(J)_1, \dots, \stair(J)_7) = (1,2,2,3,3,3,4)$.
Observe that $\stair(\varnothing) = (1, 2, \dots, n)$ is the usual staircase.

\begin{lemma}
\label{ss-dimension}
{\em (Sagan-Swanson \cite{SS})}
We have the polynomial identity
\begin{equation}
\sum_{J \subseteq [n]} \left(  \prod_{i = 1}^n [\stair(J)_i]_q  \right) \cdot z^{|J|} = 
\sum_{k = 1}^n z^{n-k} \cdot [k]!_q \cdot \Stir_q(n,k).
\end{equation}
\end{lemma}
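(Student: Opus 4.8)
The plan is to prove the identity by setting up a bijection (or a recursive decomposition) on subsets $J \subseteq [n]$ that tracks the statistic $\prod_{i=1}^n [\stair(J)_i]_q$ and matches it against the $q$-Stirling side. The cleanest route is to fix the cardinality: write $k = n - |J|$, so that the coefficient of $z^{n-k}$ on the left is $\sum_{|J| = n-k} \prod_{i=1}^n [\stair(J)_i]_q$, and it suffices to show
\begin{equation}
\sum_{\substack{J \subseteq [n] \\ |J| = n-k}} \prod_{i=1}^n [\stair(J)_i]_q = [k]!_q \cdot \Stir_q(n,k).
\end{equation}
Both sides can be attacked by a recursion on $n$: on the right, one uses the defining recursion $\Stir_q(n,k) = [k]_q \Stir_q(n-1,k) + \Stir_q(n-1,k-1)$, noting $[k]!_q \Stir_q(n,k) = [k]_q \cdot [k]!_q \Stir_q(n-1,k) + [k]!_q \Stir_q(n-1,k-1)$, where in the second term $[k]!_q = [k]_q \cdot [k-1]!_q$.

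First I would analyze how $\stair(J)$ behaves when the element $n$ is deleted from consideration. If $n \in J$, then $\stair(J)_n = \stair(J)_{n-1}$ and the truncation $(\stair(J)_1, \dots, \stair(J)_{n-1})$ equals $\stair(J \setminus \{n\})$ computed inside $[n-1]$; the new factor contributed by position $n$ is $[\stair(J)_{n-1}]_q$, and $\stair(J)_{n-1}$ is determined by $J \cap [n-1]$. If $n \notin J$, then $\stair(J)_n = \stair(J)_{n-1} + 1$; since in this case every $J$ with $|J| = n-k$ and $n \notin J$ restricts to a $J' = J \subseteq [n-1]$ with $|J'| = n - k = (n-1) - (k-1)$, the position-$n$ factor is $[\stair(J')_{n-1} + 1]_q$. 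So the left-hand sum splits as a sum over $J' \subseteq [n-1]$ with $|J'| = (n-1)-k$ of $[\stair(J')_{n-1}]_q \prod_{i<n}[\stair(J')_i]_q$, plus a sum over $J' \subseteq [n-1]$ with $|J'| = (n-1)-(k-1)$ of $[\stair(J')_{n-1}+1]_q \prod_{i<n}[\stair(J')_i]_q$. The second piece immediately gives, once we know the $k-1$ case, a factor related to $[k]!_q \Stir_q(n-1,k-1)$ — but only if the extra term $[\stair(J')_{n-1}+1]_q$ averages out correctly.

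The main obstacle, and the crux of the argument, is precisely this bookkeeping: the factor attached to position $n$ depends on $\stair(J)_{n-1}$, which is \emph{not} constant over the sum, so the recursion does not factor termwise. The fix is to prove a refined identity that keeps track of the last staircase entry as a parameter — i.e., show by induction on $n$ that $\sum_{|J|=n-k,\ \max\text{-entry}=m} \prod_i [\stair(J)_i]_q$ has a clean closed form, or equivalently introduce an auxiliary variable marking $\stair(J)_n$ and prove a two-variable refinement. One natural choice: group subsets $J \subseteq [n]$ by the position of their largest "gap run," which directly controls $\stair(J)_n$; since $\stair(J)_n$ counts $|[n] \setminus J| $ shifted, one gets $\stair(J)_n = k$ when $|J| = n-k$ only if $1 \notin J$, and in general $\stair(J)_n \le k$. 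I would then verify the base case $n = 0$ (both sides equal $1$ when $k=0$), check the edge cases $k = n$ (only $J = \varnothing$ contributes $[n]!_q = [n]!_q \Stir_q(n,n)$) and $k$ large, and push the induction through using the gap decomposition. Alternatively, a purely bijective proof is available: expand $[k]!_q \Stir_q(n,k) = \sum [\text{something}]$ as a generating function over ordered set partitions or over certain "Stirling tableaux" with a maj-like statistic, and match each such object to a unique subset $J$ with $\prod_i [\stair(J)_i]_q$ recording the same statistic by reading off descents/blocks; I expect the recursive proof to be shorter and less error-prone than carefully constructing and inverting such a bijection.
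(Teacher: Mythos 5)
The paper does not actually prove this lemma---it is quoted from Sagan--Swanson \cite{SS}---so your proposal has to be judged on its own. Your overall plan (extract the coefficient of $z^{n-k}$, delete the element $n$, and run the $q$-Stirling recursion) is exactly the right one, and in fact it finishes immediately; the ``main obstacle'' you identify is not real. The point you missed is that $\stair(J)_i = i - |J\cap[i]|$ counts the elements of $[i]$ \emph{not} in $J$, so the last entry of the truncated staircase, $\stair(J')_{n-1} = (n-1)-|J'|$, is determined by the cardinality $|J'|$ alone and hence \emph{is} constant on each of your two subsums. Concretely, in your first subsum ($n\in J$, so $|J'|=(n-1)-k$) the position-$n$ factor is $[\stair(J')_{n-1}]_q=[k]_q$, and in your second subsum ($n\notin J$, so $|J'|=(n-1)-(k-1)$) it is $[\stair(J')_{n-1}+1]_q=[k]_q$ as well. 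By induction on $n$ the coefficient of $z^{n-k}$ therefore equals
\begin{equation*}
[k]_q\cdot [k]!_q\,\Stir_q(n-1,k) \;+\; [k]_q\cdot [k-1]!_q\,\Stir_q(n-1,k-1) \;=\; [k]!_q\,\Stir_q(n,k),
\end{equation*}
which is the identity you wanted. (Edge cases are harmless: any $J$ containing $1$ contributes $[0]_q=0$, which is why $k=0$ is absent on the right.)

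Because of this misreading you also assert two false facts---that $\stair(J)_n=k$ ``only if $1\notin J$'' and that in general $\stair(J)_n\le k$; in fact $\stair(J)_n=n-|J|=k$ always---and you then propose, but do not carry out, unnecessary repairs (a two-variable refinement tracking the last entry, a ``gap-run'' grouping, or a bijection with ordered set partitions). As written the proposal is therefore incomplete: the recursion is set up correctly but never closed. The fix is the one-line observation above, after which your argument is a complete and quite short proof.
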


\subsection{Commutative Algebra}
 Our overarching strategy for analyzing $SR_n$ is to transfer problems involving
the superspace ring $\Omega_n$ to problems involving the better-understood polynomial ring $\CC[\xx_n]$.
We review the relevant notions from commutative algebra.

A commutative graded $\CC$-algebra 
$A = \bigoplus_{i \geq 0} A_i$ is {\em Artinian} if 
$A$ is a finite-dimensional $\CC$-vector space.
The {\em Hilbert series} of $A$ is 
\begin{equation}
\Hilb(A;q) := \sum_{i \geq 0} \dim_\CC(A_i) \cdot q^i,
\end{equation}
assuming each graded piece $A_i$ is finite-dimensional.

A sequence $f_1, \dots, f_n$ of $n$ polynomials in $\CC[\xx_n]$ of homogeneous positive degrees 
is a {\em regular sequence} if, for each $0 \leq i \leq n-1$,
we have a short exact sequence
\begin{equation}
0 \rightarrow 
\CC[\xx_n]/(f_1, \dots, f_i) \xrightarrow{\, \, \times f_{i+1} \, \, }
\CC[\xx_n]/(f_1, \dots, f_i) \xrightarrow{ \, \, \mathrm{can.} \, \,}
\CC[\xx_n]/(f_1, \dots ,f_i, f_{i+1}) \rightarrow 0
\end{equation}
where the first map is induced by multiplication by $f_{i+1}$ and the second map is the canonical projection.
If the regular sequence $f_1, \dots, f_n$ consists of homogeneous polynomials, the quotient ring 
$\CC[\xx_n]/(f_1, \dots, f_n)$ is a finite-dimensional graded vector space with Hilbert series
\begin{equation}
\Hilb(\CC[\xx_n]/(f_1, \dots, f_n); q) = 
[\deg f_1]_q \cdots [\deg f_n]_q.
\end{equation}
An Artinian graded quotient $\CC[\xx_n]/\aaa$ of $\CC[\xx_n]$ is a {\em complete intersection} if 
$\aaa = (f_1, \dots, f_n)$ for some length $n$ regular sequence $f_1, \dots, f_n \in \CC[\xx_n]$ of homogeneous polynomials.

The regularity of a sequence $f_1, \dots, f_n \in \CC[\xx_n]$ of polynomials of homogeneous positive degree
can be interpreted in terms of the variety cut out by $f_1, \dots, f_n$.  Given any set $S \subseteq \CC[\xx_n]$
of polynomials, write
\begin{equation}
\VV(S) := \{ \zz \in \CC^n \,:\, f(\zz) = 0 \text{ for all $f \in S$} \}
\end{equation}
for the locus of points in $\CC^n$ on which the polynomials in $S$ vanish.

\begin{lemma}
\label{regular-sequence-criterion}
Let $f_1, \dots, f_n \in \CC[\xx_n]$ be a list of $n$ homogeneous polynomials in $\CC[\xx_n]$ of positive degree.
The sequence $f_1, \dots, f_n$ is a regular sequence if and only if the variety 
$\VV(f_1, \dots, f_n) \subseteq \CC^n$ cut out by these polynomials consists of the origin $\{ 0 \}$ alone.
\end{lemma}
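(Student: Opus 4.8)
The plan is to prove the equivalence in Lemma~\ref{regular-sequence-criterion} by combining Krull's principal ideal theorem with the graded Nakayama lemma, handling the two implications separately.

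First I would prove the forward direction. Suppose $f_1, \dots, f_n$ is a regular sequence. The quotient ring $\CC[\xx_n]/(f_1, \dots, f_n)$ is then finite-dimensional over $\CC$ (its Hilbert series is the finite product $[\deg f_1]_q \cdots [\deg f_n]_q$, as recorded in the excerpt). A finite-dimensional graded quotient of $\CC[\xx_n]$ has Krull dimension $0$, which forces the affine variety $\VV(f_1, \dots, f_n)$ to be a finite set of points; since the $f_i$ are homogeneous of positive degree, that finite set is stable under scaling and hence must be $\{0\}$. Conversely, any point $\zz \in \VV(f_1,\dots,f_n)$ would give a nonzero element in every graded piece of a localization, contradicting finite-dimensionality, so $\VV(f_1,\dots,f_n) = \{0\}$.

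For the reverse direction, suppose $\VV(f_1, \dots, f_n) = \{0\}$. By Hilbert's Nullstellensatz, $\sqrt{(f_1,\dots,f_n)} = (x_1,\dots,x_n)$, so the ideal $\aaa = (f_1,\dots,f_n)$ is $(x_1,\dots,x_n)$-primary, whence $\CC[\xx_n]/\aaa$ is Artinian and $\dim \CC[\xx_n]/\aaa = 0$. Now I would use the fact that $\CC[\xx_n]$ is a Cohen--Macaulay ring of Krull dimension $n$: a sequence of $n$ homogeneous elements $f_1, \dots, f_n$ generating an ideal whose zero locus is a single point cuts the dimension down by $n$ in $n$ steps, so $f_1, \dots, f_n$ is a system of parameters. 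In a Cohen--Macaulay local (or graded-local) ring, every system of parameters is a regular sequence. Equivalently, one can argue inductively via Krull's height theorem that $\dim \VV(f_1, \dots, f_i) = n - i$ for each $i$, so that $f_{i+1}$ avoids all minimal primes of $(f_1, \dots, f_i)$ and hence is a nonzerodivisor on $\CC[\xx_n]/(f_1,\dots,f_i)$ (using that this quotient is Cohen--Macaulay, hence unmixed, so its associated primes are exactly its minimal primes); this is precisely the short exact sequence defining regularity.

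The main obstacle is the reverse direction, specifically justifying that $f_{i+1}$ is a nonzerodivisor modulo $(f_1, \dots, f_i)$: knowing the vanishing locus shrinks correctly controls only the \emph{minimal} primes, so one genuinely needs the unmixedness coming from Cohen--Macaulayness of the polynomial ring (or, alternatively, one invokes the standard theorem that in a Cohen--Macaulay ring any partial system of parameters is a regular sequence). I would simply cite the relevant standard commutative algebra references (e.g. Matsumura or Bruns--Herzog) for the Cohen--Macaulay property of $\CC[\xx_n]$ and for the equivalence of systems of parameters with regular sequences in that setting, rather than reproving it. Everything else is a routine application of the Nullstellensatz and the dimension theory of graded rings.
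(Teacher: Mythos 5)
Your argument is correct. Note that the paper itself offers no proof of this lemma: it is stated in the Background section as a standard commutative algebra fact, so there is no in-paper argument to compare against. Your two directions are exactly the standard justification one would cite: a regular sequence of homogeneous positive-degree elements gives a finite-dimensional quotient with Hilbert series $[\deg f_1]_q \cdots [\deg f_n]_q$, forcing the cone $\VV(f_1,\dots,f_n)$ to be $\{0\}$; conversely, the Nullstellensatz makes $(f_1,\dots,f_n)$ primary to the irrelevant maximal ideal, hence a homogeneous system of parameters, and such a system is a regular sequence because $\CC[\xx_n]$ is Cohen--Macaulay. You correctly isolate the genuine subtlety in the converse, namely that the dimension count only controls minimal primes and one needs unmixedness (Cohen--Macaulayness of $\CC[\xx_n]/(f_1,\dots,f_i)$, available inductively once $f_1,\dots,f_i$ is known to be regular) to rule out embedded associated primes; your inductive formulation handles this cleanly, and the passage between graded and local notions of regularity is harmless here since all the $f_i$ are homogeneous.
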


Let $\aaa \subseteq \CC[\xx_n]$ be an ideal and let $f \in \CC[\xx_n]$ be a polynomial. The {\em colon ideal}
(or {\em ideal quotient}) is 
\begin{equation}
(\aaa : f) := \{ g \in \CC[\xx_n] \,:\, f \cdot g \in \aaa \} \subseteq \CC[\xx_n].
\end{equation}
It is not difficult to check that $(\aaa : f)$ is an ideal in $\CC[\xx_n]$ which contains $\aaa$, and that
$(\aaa : f) = \CC[\xx_n]$ if and only if $f \in \aaa$.

Colon ideals will play a crucial role in our work, and we will need a criterion for determining a generating set for them. 
Let $A = \bigoplus_{i = 0}^d A_i$ be a finite-dimensional graded $\CC$-algebra with $A_d \neq 0$.
The algebra $A$ is a {\em Poincar\'e duality algebra} if 
\begin{itemize}
\item its top component $A_d \cong \CC$ is a 1-dimensional complex vector spaces, and
\item for any $0 \leq i \leq d$, the multiplication map $A_i \otimes A_{d-i} \longrightarrow A_d \cong \CC$ is
a perfect pairing.
\end{itemize}
If $A = \bigoplus_{i = 0}^d A_d$ is a Poincar\'e duality algebra with $d \neq 0$, the maximal degree $d$ is called the 
{\em socle degree} of $A$.
The following commutative algebra lemma will be remarkably useful to us.

\begin{lemma}
\label{colon-ideal-equality}
{\em (Abe-Horiguchi-Masuda-Murai-Sato \cite[Lem. 2.4]{AHMMS})}
Suppose $\aaa, \aaa' \subseteq \CC[\xx_n]$ are homogeneous ideals and $f \in \CC[\xx_n]$ is a homogeneous polynomial
of degree $k$
with $f \notin \aaa$.  Suppose $\aaa' \subseteq (\aaa : f)$. If $\CC[\xx_n]/\aaa'$ is a Poincar\'e duality algebra of socle degree $r$
and $\CC[\xx_n]/\aaa$ is a Poincar\'e duality algebra of socle degree $r + k$, then  $\aaa' = (\aaa : f)$.
\end{lemma}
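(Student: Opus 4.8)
The plan is to deduce the equality of ideals from the single containment $\aaa' \subseteq (\aaa : f)$ by showing that the surjection of graded $\CC$-algebras
\[
\pi \colon \CC[\xx_n]/\aaa' \twoheadrightarrow \CC[\xx_n]/(\aaa : f)
\]
induced by that containment is injective. Write $B' = \CC[\xx_n]/\aaa'$, $B = \CC[\xx_n]/\aaa$, and $Q = \CC[\xx_n]/(\aaa : f)$. The whole argument rests on one elementary fact about Poincar\'e duality algebras, which I would isolate and prove first: \emph{if $A = \bigoplus_{i=0}^d A_i$ is a Poincar\'e duality algebra of socle degree $d$, then every nonzero homogeneous ideal $J \subseteq A$ contains the top component $A_d$.} Indeed, choosing a nonzero homogeneous element $x \in J$ of degree $i$, nondegeneracy of the pairing $A_i \otimes A_{d-i} \to A_d$ produces $y \in A_{d-i}$ with $0 \neq xy \in A_d$; since $\dim_\CC A_d = 1$ this forces $A_d = \CC \cdot xy \subseteq J$.

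Next I would transfer socle information from $B$ to $Q$ via multiplication by $f$. The $\CC[\xx_n]$-linear map $\CC[\xx_n] \to B$ sending $g \mapsto fg + \aaa$ is homogeneous of degree $k = \deg f$ with kernel exactly $(\aaa : f)$, so it induces a degree-$k$ isomorphism of $Q$ onto the principal ideal $\overline{f}\, B \subseteq B$, where $\overline{f}$ is the image of $f$. Since $f \notin \aaa$, this ideal is nonzero, so by the fact above it contains $B_{r+k}$; as $B$ is a Poincar\'e duality algebra of socle degree $r+k$ we conclude $(\overline{f}\, B)_{r+k} = B_{r+k} \cong \CC$. Undoing the degree shift, $Q_r \cong (\overline{f}\, B)_{r+k} \neq 0$.

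Finally I would combine this with the Poincar\'e duality structure of $B'$. Its socle degree being $r$ gives $\dim_\CC B'_r = 1$, and since $\pi$ is a graded surjection we get $\pi(B'_r) = Q_r \neq 0$; as $B'_r$ is one-dimensional, $\pi$ is injective on $B'_r$, i.e. $\ker \pi \cap B'_r = 0$. But $\ker \pi = (\aaa : f)/\aaa'$ is a homogeneous ideal of the Poincar\'e duality algebra $B'$, so if it were nonzero it would contain $B'_r$ by the fact above --- a contradiction. Hence $\ker \pi = 0$, $\pi$ is an isomorphism, and $\aaa' = (\aaa : f)$.

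The argument is short, and the only genuinely structural input is the lemma that nonzero homogeneous ideals of a Poincar\'e duality algebra contain the socle; so I do not expect a conceptual obstacle. The point that requires care is the degree bookkeeping in the middle step --- correctly tracking the shift by $k = \deg f$ when identifying $Q$ with $\overline{f}\, B$ --- since it is precisely this shift that makes the numerical hypothesis ``the socle degree of $\CC[\xx_n]/\aaa$ is $r + \deg f$'' do its work, matching $Q_r$ with the one-dimensional socle of $B$ on one side and with the one-dimensional socle of $B'$ on the other. One should also record the routine verifications that $\pi$ is a well-defined graded surjection, that $(\aaa : f)$ is exactly the kernel of multiplication-by-$f$ into $B$, and that $f \notin \aaa$ is equivalent to $\overline{f}\, B \neq 0$.
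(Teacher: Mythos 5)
Your proof is correct. Note that the paper itself does not prove this lemma: it is quoted from \cite[Lem.~2.4]{AHMMS}, with only the remark that the argument there works over any field. Your self-contained argument is essentially that standard one: the two structural inputs are exactly (i) that any nonzero homogeneous ideal of a Poincar\'e duality algebra contains the one-dimensional socle, and (ii) that multiplication by $f$ embeds $\CC[\xx_n]/(\aaa : f)$ into $\CC[\xx_n]/\aaa$ with a degree shift by $k$, so that the socle-degree hypotheses match up; your degree bookkeeping ($Q_r \cong (\overline{f}B)_{r+k} = B_{r+k} \neq 0$, hence the surjection $\CC[\xx_n]/\aaa' \twoheadrightarrow \CC[\xx_n]/(\aaa:f)$ has trivial kernel) is accurate. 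The only verification left implicit is that $(\aaa : f)$ is itself homogeneous (needed so that the kernel of $\pi$ is a homogeneous ideal of $\CC[\xx_n]/\aaa'$), which follows routinely from the homogeneity of $\aaa$ and $f$, and your proof indeed uses nothing about the ground field, consistent with the paper's remark.
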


We remark that \cite[Lem. 2.4]{AHMMS} was stated over the field $\RR$ of real numbers, but its proof goes through
without change for arbitrary fields.

The polynomial ring $\CC[\xx_n]$ inherits a theory of harmonics from the superspace ring $\Omega_n$.
Partial differentiation yields an action $\odot: \CC[\xx_n] \times \CC[\xx_n] \rightarrow \CC[\xx_n]$ of 
the polynomial ring $\CC[\xx_n]$ on itself which gives rise to an inner product
\begin{equation}
\langle -, - \rangle: \CC[\xx_n] \times \CC[\xx_n] \rightarrow \CC \quad \quad 
\langle f, g \rangle = \text{ constant term of } f \odot \overline{g}.
\end{equation}
If $I \subseteq \CC[\xx_n]$ is a homogeneous ideal, we have a direct sum decomposition $\CC[\xx_n] = I \oplus I^{\perp}$
and an identification
\begin{equation}
I^{\perp} = \{ g \in \CC[\xx_n] \,:\, f \odot g = 0 \text{ for all $f \in I$} \}
\end{equation}
of the harmonic space $I^{\perp}$ as a subspace of $\CC[\xx_n]$.

The harmonic theory of the classical coinvariant ideal $I_n \subseteq \CC[\xx_n]$ is given as follows. Let 
$\delta_n \in \CC[\xx_n]$ be the {\em Vandermonde determinant}
\begin{equation}
\delta_n := \prod_{i < j} (x_j - x_i) \in \CC[\xx_n].
\end{equation}
Then $I_n^{\perp}$ is a cyclic $\CC[\xx_n]$-module under the $\odot$-action generated by $\delta_n$.
In symbols, we have
\begin{equation}
I_n^{\perp} = \CC[\xx_n] \odot \delta_n.
\end{equation}
We write $H_n$ for the subspace $I_n^{\perp} = \CC[\xx_n] \odot \delta_n \subseteq \CC[\xx_n]$; we have 
an isomorphism $R_n \cong H_n$ of graded $\symm_n$-modules.
The annihilator of $\delta_n$ under the $\odot$-action is precisely the coinvariant ideal $I_n$:
\begin{equation}
\ann_{\CC[\xx_n]}(\delta_n) = 
\{ f \in \CC[\xx_n] \,:\, f \odot \delta_n = 0 \} = I_n.
\end{equation}

\section{Upper Bound}
\label{Upper}

\subsection{A regular sequence in $\CC[\xx_n]$}
Our first lemma gives a general technique for constructing interesting elements of the supercoinvariant ideal $SI_n$.

\begin{lemma}
\label{si-differential-ideal}
The supercoinvariant ideal $SI_n \subseteq \Omega_n$
contains the classical coinvariant ideal $I_n \subseteq \CC[\xx_n]$ and
 is closed under the action of the Euler operator
$d: \Omega_n \rightarrow \Omega_n$.
\end{lemma}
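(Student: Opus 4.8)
The plan is to prove the two assertions in turn, with the bulk of the work going into $d$-stability. For the containment $I_n \subseteq SI_n$: the diagonal $\symm_n$-action on $\Omega_n$ restricts, on the polynomial subring $\CC[\xx_n] = \CC[\xx_n] \otimes 1$, to the usual permutation action, so $\CC[\xx_n]^{\symm_n}_+ \subseteq (\Omega_n)^{\symm_n}_+$. Since $I_n$ is generated inside $\CC[\xx_n]$ by $\CC[\xx_n]^{\symm_n}_+$, every element of $I_n$ is a $\CC[\xx_n]$-linear combination of elements of $\CC[\xx_n]^{\symm_n}_+$, hence an $\Omega_n$-linear combination of elements of $(\Omega_n)^{\symm_n}_+$, hence lies in $SI_n$. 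So $I_n \subseteq SI_n$ (and consequently the ideal of $\Omega_n$ generated by $I_n$ sits inside $SI_n$ too).

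For closure under $d$, I would first record two features of the Euler operator. (i) \emph{$d$ is $\symm_n$-equivariant and lands in the augmentation ideal.} From $w \cdot \partial_i(f) = \partial_{w(i)}(w \cdot f)$ and $w \cdot \theta_i = \theta_{w(i)}$ one gets $w \cdot d(f) = \sum_i \partial_{w(i)}(w \cdot f)\,\theta_{w(i)} = d(w \cdot f)$; and since $d$ raises fermionic degree by $1$, it carries $(\Omega_n)^{\symm_n}$ into $(\Omega_n)^{\symm_n}_+$, which in particular is therefore $d$-stable. (ii) \emph{$d$ is a signed derivation.} Each $\partial_i$ is an ordinary (even) derivation of $\Omega_n$, being the $x_i$-derivative extended $\wedge\{\ttheta_n\}$-linearly; combining $\partial_i(fg) = \partial_i(f)\,g + f\,\partial_i(g)$ with the graded-commutativity relation $g\,\theta_i = (-1)^{\epsilon} \theta_i\, g$ valid for $g$ of fermionic degree $\epsilon$ yields
\[
d(fg) \;=\; (-1)^{\epsilon}\,(df)\,g \;+\; f\,(dg) \qquad (g \text{ bihomogeneous of fermionic degree } \epsilon),
\]
extended $\CC$-bilinearly in general. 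Granting (i) and (ii), take $\omega \in SI_n$ and write $\omega = \sum_\alpha g_\alpha h_\alpha$ with the $h_\alpha \in (\Omega_n)^{\symm_n}_+$ bihomogeneous and $g_\alpha \in \Omega_n$ (possible since $(\Omega_n)^{\symm_n}$ is a bigraded subspace). Then $d(g_\alpha h_\alpha) = (-1)^{|h_\alpha|}(d g_\alpha)h_\alpha + g_\alpha (d h_\alpha)$; the first summand lies in $SI_n$ because $h_\alpha \in SI_n$ and $SI_n$ is an ideal, and the second lies in $SI_n$ because $d h_\alpha \in (\Omega_n)^{\symm_n}_+ \subseteq SI_n$ by (i). Summing over $\alpha$ gives $d\omega \in SI_n$.

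The argument involves no substantial obstacle: the only place demanding care is the bookkeeping of signs in the super-Leibniz rule (ii) --- equivalently, remembering that in $d(f) = \sum_i \partial_i f \cdot \theta_i$ the new $\theta_i$ is appended on the \emph{right} --- and even that sign is immaterial to the conclusion, since both summands of the Leibniz expansion lie in $SI_n$ regardless. The same reasoning shows that $SI_n$ is in fact stable under each higher Euler operator $d_j$ (replace $\partial_i$ by $\partial_i^j$ throughout), a fact we note here although only $d = d_1$ is needed for the present lemma.
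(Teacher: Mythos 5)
Your proof is correct and takes essentially the same route as the paper: $\symm_n$-equivariance of $d$ together with the super-Leibniz rule, applied to an expression of an element of $SI_n$ in terms of invariant generators with vanishing constant term. The only (harmless) difference is that the paper also cites $d \circ d = 0$, which your argument renders unnecessary by observing directly that $d$ sends any invariant into $(\Omega_n)^{\symm_n}_+$ because it raises fermionic degree.
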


\begin{proof}
The operator $d$ commutes with the action of $\symm_n$ on $\Omega_n$, so
the result follows from the Leibniz formula 
\begin{equation}
d(fg) = df \cdot g \pm f \cdot dg \quad \quad 
\end{equation} 
which holds for any bihomogeneous $f, g \in \Omega_n$ 
(the sign is $+$ if $f$ has even fermionic degree and $-$ otherwise)
and the relation $d \circ d = 0$.
\end{proof}

Ideals in $\Omega_n$ which are closed under the action of $d$ are called {\em differential ideals}.
To the knowledge of the authors, the supercoinvariant ideal $SI_n$ is the first differential ideal which has received 
significant attention in algebraic combinatorics.

The most important elements of $SI_n$ arising from Lemma~\ref{si-differential-ideal} are as follows. Let $h_r, e_r \in \CC[\xx_n]$
be the complete homogeneous and elementary symmetric polynomials
\begin{equation}
h_r := \sum_{1 \leq i_1 \leq \cdots \leq i_r \leq n} x_{i_1} \cdots x_{i_r} \quad \quad
e_r := \sum_{1 \leq i_1 < \cdots < i_r \leq n} x_{i_1} \cdots x_{i_r}.
\end{equation}
Here and throughout, if $S \subseteq [n]$ is an index set, we use $h_r(S)$ and $e_r(S)$ to denote the complete 
homogeneous and elementary symmetric polynomials of degree $r$ in the variables indexed by $S$. For example, we have
\begin{equation*}
h_2(134) = x_1^2 + x_1 x_3 + x_1 x_4 + x_3^2 + x_3 x_4 + x_4^2 \quad  \text{and} \quad
e_2(134) = x_1 x_3 + x_1 x_4 + x_3 x_4.
\end{equation*}
For any subset $S \subseteq [n]$,
it is well-known that 
\begin{equation}
\label{classical-coinvariant-ideal-membership}
h_r(S) \in I_n  \quad \text{whenever $r > n - |S|$.}
\end{equation}
Indeed, \eqref{classical-coinvariant-ideal-membership} follows inductively from the 
identity $h_r(S \cup i) = x_i h_{r-1}(S \cup i) + h_r(S)$ which holds whenever $i \notin S$. By Lemma~\ref{si-differential-ideal},
we have
\begin{equation}
\label{supercoinvariant-ideal-membership}
dh_r(S) \in SI_n  \quad \text{whenever $r > n - |S|$.}
\end{equation}
Elements of $SI_n$ of the form \eqref{classical-coinvariant-ideal-membership} and \eqref{supercoinvariant-ideal-membership}
are the only ones we will need.

For any subset $J \subseteq [n]$, we construct a sequence $(q_{J,1}, q_{J,2}, \dots, q_{J,n})$  of superspace elements
as follows. 
Given $J \subseteq [n]$, the sequence $(q_{J,1}, q_{J,2}, \dots, q_{J,n})$ in $\Omega_n$ is defined by
\begin{equation}
q_{J,i} := \begin{cases}
h_i(\{i, i+1, \dots, n\}) \cdot \theta_J & i < \min(J) \\
dh_r(J \cup \{i+1, \dots, n\}) \cdot \theta_{J - \max(J \cap \{1, \dots, i\})} & i \geq \min(J)
\end{cases}
\end{equation}
where in the second branch $r = n - |J \cup \{i+1, \dots, n\}| + 1$.

The superspace elements $q_{J,i}$ may be visualized (and remembered)
as follows. Consider a linear array of $n$ boxes labeled $1, \dots, n$ from left to right, where the boxes in positions $j \in J$
are decorated with a $\theta$. We consider moving a pointer from left to right along this array.
When $n = 7$ and $J = \{3,5,6\}$, the picture is shown in Figure~\ref{pointer-figure}.
\begin{itemize}
\item
When the pointer is at a position $i$ which is strictly to the left of all of the $\theta$ decorations, the corresponding superspace
element is
$q_{J,i} = h_i(\{i, i+1, \dots, n\}) \cdot \theta_J$.  
\item
When the pointer is at a position $i$ which is weakly to the right of at least
one $\theta$ decoration, the corresponding superspace element is 
$q_{J,i} = dh_r(J \cup \{i+1, \dots, n \}) \cdot \theta_{\overline{J}}$, where $\overline{J}$ consists of all elements of $J$ except for the 
closest element $j \in J$ weakly to the right of the pointer and $r = n - |J \cup \{i+1, \dots, n\}| + 1$ is the minimal degree such that 
$h_r(J \cup \{i+1, \dots, n\}) \in I_n$ lies in the classical coinvariant ideal.
\end{itemize}
In our example, we have
\begin{equation*}
q_{J,1} = h_1(1234567) \cdot \theta_{356} \quad 
q_{J,2} = h_2(234567) \cdot \theta_{356} \quad 
q_{J,3} = d h_3(34567) \cdot \theta_{56} \quad 
q_{J,4} = d h_4(3567) \cdot \theta_{56} 
\end{equation*}
\begin{equation*}
q_{J,5} = d h_4(3567) \cdot \theta_{36} \quad
q_{J,6} = d h_4(3567) \cdot \theta_{35} \quad
q_{J,7} = d h_5(356) \cdot \theta_{35}.
\end{equation*}
We record some basic observations about the polynomials $q_{J,i}$.

\begin{figure}
\begin{center}
\begin{tikzpicture}[scale = 0.4]
  \begin{scope}
    \clip (0,0) -| (7,1) -| (0,0);
    \fill [color=black!10] (0,0) -| (7,1) -| (0,0);
    \draw [color=black!70] (0,0) grid (7,1);
\end{scope}

\node at (0.5,1.9) {$\downarrow$};

\node at (2.5, 0.5) {$\theta$};
\node at (4.5, 0.5) {$\theta$};
\node at (5.5, 0.5) {$\theta$};

\node at (0.5,-0.7) {1};
\node at (1.5,-0.7) {2};
\node at (2.5,-0.7) {3};
\node at (3.5,-0.7) {4};
\node at (4.5,-0.7) {5};
\node at (5.5,-0.7) {6};
\node at (6.5,-0.7) {7};

\draw [thick] (0,0) -| (7,1) -| (0,0);
\draw [very thick, color = red] (2,-1.5) -- (2,3);

  \begin{scope}
    \clip (10,0) -| (17,1) -| (10,0);
    \fill [color=black!10] (11,0) -| (17,1) -| (11,0);
    \draw [color=black!70] (10,0) grid (17,1);
\end{scope}

\draw [thick] (10,0) -| (17,1) -| (10,0);
\draw [very thick, color = red] (12,-1.5) -- (12,3);

\node at (11.5,1.9) {$\downarrow$};

\node at (12.5, 0.5) {$\theta$};
\node at (14.5, 0.5) {$\theta$};
\node at (15.5, 0.5) {$\theta$};

\node at (10.5,-0.7) {1};
\node at (11.5,-0.7) {2};
\node at (12.5,-0.7) {3};
\node at (13.5,-0.7) {4};
\node at (14.5,-0.7) {5};
\node at (15.5,-0.7) {6};
\node at (16.5,-0.7) {7};

  \begin{scope}
    \clip (20,0) -| (27,1) -| (20,0);
     \fill [color=black!10] (22,0) -| (27,1) -| (22,0);
    \draw [color=black!70] (20,0) grid (27,1);
\end{scope}

\draw [thick] (20,0) -| (27,1) -| (20,0);
\draw [very thick, color = red] (22,-1.5) -- (22,3);
\draw [thick] (22,0) -- (23,1);
\draw [thick] (22,1) -- (23,0);

\node at (22.5,1.9) {$\downarrow$};

\node at (22.5, 0.5) {$\theta$};
\node at (24.5, 0.5) {$\theta$};
\node at (25.5, 0.5) {$\theta$};

\node at (20.5,-0.7) {1};
\node at (21.5,-0.7) {2};
\node at (22.5,-0.7) {3};
\node at (23.5,-0.7) {4};
\node at (24.5,-0.7) {5};
\node at (25.5,-0.7) {6};
\node at (26.5,-0.7) {7};

  \begin{scope}
    \clip (30,0) -| (37,1) -| (30,0);
    \fill [color=black!10] (32,0) -| (33,1) -| (32,0);
    \fill [color=black!10] (34,0) -| (37,1) -| (34,0);
    \draw [color=black!70] (30,0) grid (37,1);
\end{scope}

\draw [thick] (30,0) -| (37,1) -| (30,0);
\draw [very thick, color = red] (32,-1.5) -- (32,3);

\node at (33.5,1.9) {$\downarrow$};

\draw [thick] (32,0) -- (33,1);
\draw [thick] (32,1) -- (33,0);

\node at (32.5, 0.5) {$\theta$};
\node at (34.5, 0.5) {$\theta$};
\node at (35.5, 0.5) {$\theta$};

\node at (30.5,-0.7) {1};
\node at (31.5,-0.7) {2};
\node at (32.5,-0.7) {3};
\node at (33.5,-0.7) {4};
\node at (34.5,-0.7) {5};
\node at (35.5,-0.7) {6};
\node at (36.5,-0.7) {7};

\end{tikzpicture}

\begin{tikzpicture}[scale = 0.4]
  \begin{scope}
    \clip (0,0) -| (7,1) -| (0,0);
    \fill [color=black!10] (2,0) -| (3,1) -| (2,0);
    \fill [color=black!10] (4,0) -| (7,1) -| (4,0);
    \draw [color=black!70] (0,0) grid (7,1);
\end{scope}

\node at (4.5,1.9) {$\downarrow$};

\draw [thick] (4,0) -- (5,1);
\draw [thick] (4,1) -- (5,0);

\node at (2.5, 0.5) {$\theta$};
\node at (4.5, 0.5) {$\theta$};
\node at (5.5, 0.5) {$\theta$};

\node at (0.5,-0.7) {1};
\node at (1.5,-0.7) {2};
\node at (2.5,-0.7) {3};
\node at (3.5,-0.7) {4};
\node at (4.5,-0.7) {5};
\node at (5.5,-0.7) {6};
\node at (6.5,-0.7) {7};

\draw [thick] (0,0) -| (7,1) -| (0,0);
\draw [very thick, color = red] (2,-1.5) -- (2,3);

  \begin{scope}
    \clip (10,0) -| (17,1) -| (10,0);
    \fill [color=black!10] (12,0) -| (13,1) -| (12,0);
    \fill [color=black!10] (14,0) -| (17,1) -| (14,0);
    \draw [color=black!70] (10,0) grid (17,1);
\end{scope}

\draw [thick] (10,0) -| (17,1) -| (10,0);
\draw [very thick, color = red] (12,-1.5) -- (12,3);

\node at (15.5,1.9) {$\downarrow$};

\draw [thick] (15,0) -- (16,1);
\draw [thick] (15,1) -- (16,0);

\node at (12.5, 0.5) {$\theta$};
\node at (14.5, 0.5) {$\theta$};
\node at (15.5, 0.5) {$\theta$};

\node at (10.5,-0.7) {1};
\node at (11.5,-0.7) {2};
\node at (12.5,-0.7) {3};
\node at (13.5,-0.7) {4};
\node at (14.5,-0.7) {5};
\node at (15.5,-0.7) {6};
\node at (16.5,-0.7) {7};

  \begin{scope}
    \clip (20,0) -| (27,1) -| (20,0);
    \fill [color=black!10] (22,0) -| (23,1) -| (22,0);
    \fill [color=black!10] (24,0) -| (26,1) -| (24,0);
    \draw [color=black!70] (20,0) grid (27,1);
\end{scope}

\draw [thick] (20,0) -| (27,1) -| (20,0);
\draw [very thick, color = red] (22,-1.5) -- (22,3);

\node at (26.5,1.9) {$\downarrow$};

\draw [thick] (25,0) -- (26,1);
\draw [thick] (25,1) -- (26,0);

\node at (22.5, 0.5) {$\theta$};
\node at (24.5, 0.5) {$\theta$};
\node at (25.5, 0.5) {$\theta$};

\node at (20.5,-0.7) {1};
\node at (21.5,-0.7) {2};
\node at (22.5,-0.7) {3};
\node at (23.5,-0.7) {4};
\node at (24.5,-0.7) {5};
\node at (25.5,-0.7) {6};
\node at (26.5,-0.7) {7};

\end{tikzpicture}
\end{center}

\caption{The pointer construction for the superspace elements $q_{J,i} \in \Omega_n$ and the polynomials
$p_{J,i} \in \CC[\xx_n]$. Here $n = 7$ and $J = \{3,5,6\}$. Boxes whose positions in $J$ are indicated with a $\theta$.
Shaded boxes indicate the set of bosonic variables involved at each stage; boxes with a $\theta$ are always shaded.
The degree of the $h$-polynomial in $q_{J,i}$ and $p_{J,i}$ is the number of unshaded boxes, plus one.
Once the pointer crosses the red line 
(i.e. reaches the minimum element of $J$), the definition of $q_{J,i}$ and $p_{J,i}$ involves derivatives. The pointer points 
to shaded boxes to the left of the right line, and an unshaded box or $\theta$ box to the right of the red line. The $\theta$ decoration
with an $\times$ corresponds to an unused $\theta$-variable $\theta_s$ in the case of $q_{J,i}$, or a partial derivative $\partial_s$
in the case of $p_{J,i}$. The $\times$ appears on the closest $\theta$ which is weakly to the left of the pointer.}
\label{pointer-figure}
\end{figure}
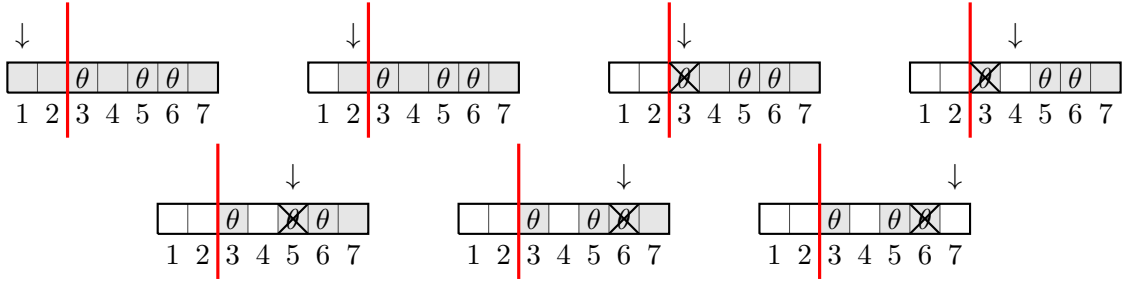

\begin{lemma}
\label{q-polynomial-observations}
Let $J \subseteq [n]$ and let $(q_{J,1}, q_{J,2}, \dots, q_{J,n})$ be the associated sequence of elements of $\Omega_n$.
For any $1 \leq i \leq n$, the superspace element $q_{J,i}$ satisfies the following properties.
\begin{enumerate}
\item We have $q_{J,i} \in SI_n$.
\item The superspace element $q_{J,i}$ is bihomogeneous with fermionic degree $|J|$ and bosonic degree $\stair(J)_i$ where
$\stair(J) = (\stair(J)_1, \dots, \stair(J)_n)$ is the $J$-staircase.
\item The element $q_{J,i}$ lies in the subspace $\bigoplus_{J \leq_{\Gale} K} \CC[\xx_n] \cdot \theta_K$ of $\Omega_n$ spanned by monomials whose
fermionic parts are greater than or equal to $J$ in Gale order.
\end{enumerate}  
\end{lemma}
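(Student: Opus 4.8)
The plan is to verify each of the three properties directly from the piecewise definition of $q_{J,i}$, splitting into the two cases $i < \min(J)$ and $i \geq \min(J)$ according to the position of the pointer relative to the $\theta$-decorations.

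\textbf{Property (1): membership in $SI_n$.} In the first case $i < \min(J)$, we have $q_{J,i} = h_i(\{i,\dots,n\}) \cdot \theta_J$, and since $|\{i,\dots,n\}| = n-i+1$, the inequality $i > n - (n-i+1)$ holds, so $h_i(\{i,\dots,n\}) \in I_n$ by \eqref{classical-coinvariant-ideal-membership}; multiplying by $\theta_J$ keeps us in the ideal $SI_n \supseteq I_n$. In the second case $i \geq \min(J)$, we have $q_{J,i} = dh_r(J \cup \{i+1,\dots,n\}) \cdot \theta_{J'}$ where $r = n - |J \cup \{i+1,\dots,n\}| + 1 > n - |J \cup \{i+1,\dots,n\}|$, so $h_r(J \cup \{i+1,\dots,n\}) \in I_n$; then $dh_r(J \cup \{i+1,\dots,n\}) \in SI_n$ by \eqref{supercoinvariant-ideal-membership} (i.e. by Lemma~\ref{si-differential-ideal}), and multiplying by the fermionic monomial $\theta_{J'}$ stays in the ideal.

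\textbf{Property (2): bidegree.} The fermionic degree is immediate: in the first case the fermionic part is $\theta_J$ of degree $|J|$; in the second case $d$ raises fermionic degree by $1$, so $dh_r(\cdots)$ has fermionic degree $1$, and $\theta_{J - \max(J \cap \{1,\dots,i\})}$ has degree $|J| - 1$, for a total of $|J|$. For the bosonic degree, in the first case $h_i(\cdots)$ has bosonic degree $i = \stair(J)_i$ (valid since $i < \min(J)$ means $1,\dots,i \notin J$, so $\stair(J)_i = i$ by the recursion). In the second case $d$ lowers bosonic degree by $1$, so $dh_r(\cdots)$ has bosonic degree $r - 1 = n - |J \cup \{i+1,\dots,n\}|$; the main computation here is to check that this equals $\stair(J)_i$, which I would do by a short induction on $i$ running from $\min(J)$ to $n$, matching the recursion $\stair(J)_{i+1} = \stair(J)_i + [i+1 \notin J]$ against the change in $|J \cup \{i+1,\dots,n\}|$ as $i$ increments.

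\textbf{Property (3): Gale triangularity.} In the first case the fermionic part is exactly $\theta_J$, so $q_{J,i}$ lies in $\CC[\xx_n] \cdot \theta_J$ and $J \leq_\Gale J$ trivially. In the second case, $d$ contributes each $\theta_s$ with $s \in [n]$, so the fermionic monomials appearing in $q_{J,i}$ are $\theta_s \cdot \theta_{J'}$ where $J' = J - j_0$ with $j_0 := \max(J \cap \{1,\dots,i\})$ (the closest element of $J$ weakly left of the pointer), and these are nonzero only when $s \notin J'$. The resulting fermionic index set is $K = (J - j_0) \cup \{s\}$, which has the same cardinality $|J|$ as $J$; I must check $J \leq_\Gale K$. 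Since $K$ differs from $J$ only by removing $j_0$ and inserting $s$, sorting both sets and comparing entrywise shows $J \leq_\Gale K$ precisely when $s \geq j_0$ in the appropriate sorted position — but in fact the relevant constraint is that $\partial_s$ acts nontrivially on $h_r(J \cup \{i+1,\dots,n\})$ only when $x_s$ appears, i.e. $s \in J \cup \{i+1,\dots,n\}$, hence either $s \in J$ with $s > j_0$ (as $j_0$ is the largest element of $J$ that is $\leq i$, any other element of $J$ in the relevant variable set is $> i \geq j_0$) or $s \in \{i+1,\dots,n\}$ so $s > i \geq j_0$. In either case $s > j_0$, and replacing $j_0$ by a larger element $s$ in $J$ only moves entries up in the sorted order, giving $J \leq_\Gale K$. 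The main obstacle is this last bookkeeping: carefully confirming that every $\theta_s$ surviving in $dh_r(J \cup \{i+1,\dots,n\})$ has index $s$ strictly exceeding the removed element $j_0$, and then translating "swap out a small element for a larger one" into the entrywise Gale inequality — this requires being precise about which variables actually occur in $h_r(J \cup \{i+1,\dots,n\})$.
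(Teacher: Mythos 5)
Your overall strategy is exactly the paper's: verify (1) from the memberships \eqref{classical-coinvariant-ideal-membership} and \eqref{supercoinvariant-ideal-membership}, verify (2) by tracking how the bosonic degree changes as the pointer moves (your sketched induction on $i$ is the routine check the paper subsumes under ``by construction''), and verify (3) by expanding $d h_r(J \cup \{i+1,\dots,n\})\cdot\theta_{J'}$ term by term. Parts (1) and (2) are fine.

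In (3), however, your bookkeeping of which indices $s$ actually survive is wrong, even though the conclusion is salvageable. Write $j_0 = \max(J \cap \{1,\dots,i\})$ and $J' = J - j_0$. You correctly record the two constraints $s \in J \cup \{i+1,\dots,n\}$ (so that $\partial_s$ acts nontrivially) and $s \notin J'$ (so that $\theta_s\theta_{J'} \neq 0$), but your ensuing dichotomy ignores the second one. An index $s \in J$ with $s > j_0$ lies in $J'$, so that term vanishes; such $s$ should not appear in your case analysis at all, and the justification you give for that branch --- that ``any other element of $J$ in the relevant variable set is $> i$'' --- is false, since \emph{all} of $J$ lies in $J \cup \{i+1,\dots,n\}$ (e.g.\ $J=\{3,5,6\}$, $i=5$, $s=3$). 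Conversely, the index $s = j_0$ does survive, contradicting your assertion ``in either case $s > j_0$.'' The correct enumeration is: $s = j_0$, giving $K = J$ and hence $J \leq_\Gale K$ trivially; or $s \in \{i+1,\dots,n\} - J$, so $s > i \geq j_0$ and your ``replace $j_0$ by a strictly larger element not in $J$'' argument gives $J <_\Gale K$. With that correction the proof closes and matches the paper's, which simply observes that the surviving $\theta_K$ in this expansion satisfy $J \leq_\Gale K$; as written, though, your case analysis both asserts a false intermediate claim and fails to cover a surviving term.
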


\begin{proof}
The memberships \eqref{classical-coinvariant-ideal-membership} and \eqref{supercoinvariant-ideal-membership} and the construction
of $q_{J,i}$ imply (1).
Moving the pointer from $i-1$ to $i$ does not change the bosonic degree of $q_{J,i}$ when the box $i$ is decorated with a 
$\theta$, and increases the bosonic degree of $q_{J,i}$ by 1 otherwise, so (2) also holds by construction. To see why (3) is true,
observe that the only surviving fermionic monomials $\theta_K$ in the expression
\begin{multline}
dh_r(J \cup \{i+1, \dots, n\}) \cdot \theta_{J - \max(J \cap \{1, \dots, i\})} = \\ 
\sum_{k \in J \cup \{i+1, \dots, n \}}  \partial_k h_r(J \cup \{i+1, \dots, n\}) \cdot \theta_k \cdot \theta_{J - \max(J \cap \{1, \dots, i\})}
\end{multline}
satisfy $J \leq_\Gale K$.
\end{proof}

We will be interested in the projections of the $q_{J,i}$ to $\CC[\xx_n] \cdot \theta_J$.
To this end, define polynomials $(p_{J,1}, p_{J,2}, \dots, p_{J,n}) \in \CC[\xx_n]$ by the rule
\begin{equation}
p_{J,i} =
\begin{cases}
 h_i(i, i+1, \dots, n\}) & j < \min(J) \\
\partial_s( h_r(J \cup \{i+1, \dots, n\})) & s = \max(I \cap \{1, \dots, i\})
 \end{cases}
\end{equation}
where (as in the definition of $q_{J,i}$) in the second branch $r := n - |J \cup \{i+1, \dots, n\}| + 1$.
As with the superspace elements $q_{J,i}$, the polynomials $p_{J,i}$ are easily visualized using the pointer construction.
The index $s$ on the partial derivative operator $\partial_s$ is the maximal element of $j$ weakly to the left of the pointer.
As the pointer moves from left to right, the degree of the $h$-polynomial increases and its number of arguments decreases.
When $n = 7$ and $J = \{3,5,6\}$, Figure~\ref{pointer-figure} yields
\begin{equation*}
p_{J,1} = h_1(1234567) \quad 
p_{J,2} = h_2(234567) \quad 
p_{J,3} = \partial_3 h_3(34567) \quad
p_{J,4} = \partial_3 h_4(3567)
\end{equation*}
\begin{equation*}
p_{J,5} = \partial_5 h_4(3567) \quad 
p_{J,6} = \partial_6 h_4(3567) \quad
p_{J,7} = \partial_6 h_5(356).
\end{equation*}

By Lemma~\ref{q-polynomial-observations} (3), we have
\begin{equation}
\label{q-p-equivalence}
q_{J,i} \equiv p_{J,i} \cdot \theta_J  \mod \bigoplus_{J <_\Gale K} \CC[\xx_n] \cdot \theta_K
\end{equation}
for all subsets $J \subseteq [n]$ and $1 \leq i \leq n$.
The polynomials $p_{J,i} \in \CC[\xx_n]$ 
are the `Gale-leading terms' of the $q_{J,i} \in \Omega_n$
and will give us access to the tools of classical commutative algebra in 
$\CC[\xx_n]$.  
In particular, we will prove that $p_{J,1}, \dots, p_{J,n}$ is a regular sequence in $\CC[\xx_n]$ as long as $1 \notin J$.
Our first step in doing so is an identity involving partial derivatives of homogeneous symmetric polynomials
in partial variable sets.

\begin{lemma}
\label{miracle-identity}
If $S \subseteq [n]$ is any subset with $a,b \in S$ and $c \notin S$ 
then
\begin{equation}
\label{differential-h-identity}
\partial_a h_r(S) =
\partial_b h_r(S) + (x_c - x_b) \cdot \partial_b h_{r-1}(S \cup c) -
(x_c - x_a) \cdot  \partial_a h_{r-1}(S \cup c)
\end{equation}
for all $r > 1$. 
\end{lemma}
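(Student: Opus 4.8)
The plan is to derive the identity \eqref{differential-h-identity} from a single master recursion relating $h_r(S)$ to $h_{r-1}$ in one more variable, and then apply $\partial_a$ and $\partial_b$ to it and subtract. The key elementary fact is that for $c \notin S$ one has the telescoping identity $h_r(S \cup c) = h_r(S) + x_c \cdot h_{r-1}(S \cup c)$, equivalently $h_r(S) = h_r(S\cup c) - x_c\, h_{r-1}(S \cup c)$. The idea is to differentiate this relation and exploit the fact that $\partial_a$ and $\partial_b$ both act on the variable sets $S$ and $S \cup c$ (since $a, b \in S \subseteq S \cup c$, while $c \neq a,b$), so that the "extra" variable $x_c$ is treated as a constant by $\partial_a$ and $\partial_b$.

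First I would record the auxiliary recursion $\partial_a h_r(S \cup c) = h_{r-1}(S\cup c) + x_c \cdot \partial_a h_{r-1}(S \cup c)$, obtained by applying $\partial_a$ to $h_r(S\cup c) = h_r(S) + \cdots$ after re-expressing... actually cleaner: apply $\partial_a$ directly to $h_r(S) = h_r(S \cup c) - x_c h_{r-1}(S\cup c)$ to get
\[
\partial_a h_r(S) = \partial_a h_r(S \cup c) - x_c \cdot \partial_a h_{r-1}(S \cup c),
\]
and similarly with $b$ in place of $a$. Subtracting the two gives
\[
\partial_a h_r(S) - \partial_b h_r(S) = \big(\partial_a h_r(S\cup c) - \partial_b h_r(S \cup c)\big) - x_c\big(\partial_a h_{r-1}(S\cup c) - \partial_b h_{r-1}(S\cup c)\big).
\]
Now the point is that the symmetric polynomial $h_r(S \cup c)$ is symmetric in all of its variables, in particular in $x_a$ and $x_b$; a standard fact about symmetric polynomials is then that $\partial_a h_r(T) - \partial_b h_r(T) = (x_a - x_b)\cdot(\text{something})$, and more precisely $\partial_a h_r(T) - \partial_b h_r(T) = (x_a - x_b)\, \partial_a\partial_b h_r(T)$ — but I would instead use the sharper substitution: for a polynomial $g$ symmetric in $x_a, x_b$, one has $\partial_a g = \partial_b g$ after swapping $a \leftrightarrow b$, which lets one rewrite $\partial_b h_r(S\cup c)$ in terms of $h$ with a smaller variable set. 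The cleanest route is: since $a \in S\cup c$, apply the single-variable recursion $h_r(S \cup c) = h_r((S \cup c) \setminus a) + x_a h_{r-1}(S\cup c)$ and differentiate by $\partial_a$, yielding $\partial_a h_r(S\cup c) = h_{r-1}(S \cup c) + x_a \partial_a h_{r-1}(S \cup c)$; do the same with $b$, subtract, and get $\partial_a h_r(S\cup c) - \partial_b h_r(S \cup c) = x_a \partial_a h_{r-1}(S\cup c) - x_b \partial_b h_{r-1}(S\cup c)$.

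Substituting this (and doing nothing to the $h_{r-1}$ term) into the displayed difference yields
\[
\partial_a h_r(S) - \partial_b h_r(S) = x_a \partial_a h_{r-1}(S\cup c) - x_b \partial_b h_{r-1}(S\cup c) - x_c \partial_a h_{r-1}(S\cup c) + x_c \partial_b h_{r-1}(S\cup c),
\]
and regrouping the right-hand side as $(x_c - x_b)\partial_b h_{r-1}(S\cup c) - (x_c - x_a)\partial_a h_{r-1}(S \cup c)$ gives exactly \eqref{differential-h-identity}. So the proof is just: differentiate the basic one-variable $h$-recursion twice, once for the full set $S$ and once for $S \cup c$, and assemble. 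The only place requiring care — the "main obstacle," though it is minor — is bookkeeping which recursion is being applied to which variable set and checking the hypothesis $r > 1$ is exactly what is needed so that $h_{r-1}$ and $h_{r-2}$ terms that appear are still genuine (nonnegative-degree) complete homogeneous polynomials; one should double-check the edge behavior when $r = 2$, where $h_{r-1} = h_1$ is linear and $\partial$ of it is a constant, which is fine. I expect no serious difficulty; the identity is "miraculous" in appearance only because the natural proof passes through the auxiliary set $S \cup c$ that does not appear symmetrically in the final formula.
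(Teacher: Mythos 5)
Your proposal is correct, and it rests on the same two elementary facts as the paper's own proof: the recursion $h_r(T) = h_r(T \setminus t) + x_t\, h_{r-1}(T)$ and the resulting vanishing of $\partial_t$ on $h_r(T \setminus t)$; the paper merely packages the computation as an expand-and-regroup verification of the right-hand side, whereas you differentiate the recursions (once passing from $S$ to $S \cup c$, once stripping $x_a$ or $x_b$ inside $S \cup c$) and subtract. Your forward derivation is a clean, arguably more transparent arrangement of the same argument, so no changes are needed (the stray remark about $h_{r-2}$ terms is harmless, since none appear).
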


In Lemma~\ref{miracle-identity} we allow the possibility $a = b$, in which case the claimed equation is trivial.

\begin{proof}
The RHS of 
Equation~\eqref{differential-h-identity} may be expanded and regrouped to give
\begin{multline}
\label{script-one}
\partial_b h_r(S) + (x_c - x_b) \partial_b h_{r-1}(S \cup c) -
(x_c - x_a) \partial_a h_{r-1}(S \cup c)  = \\
\left[ \partial_b (h_r(S) + x_c h_{r-1}(S \cup c)) - \partial_a(x_c h_{r-1}(S \cup c)) \right]  - [x_b \partial_b h_{r-1}(S \cup c)] + 
[x_a \partial_a h_{r-1}(S \cup c)].
\end{multline}
Since $h_r(S) + x_c h_{r-1}(S \cup c) = h_r(S \cup c)$, the expression in the first set of brackets $[ \, \cdots ]$ on the 
RHS of Equation~\eqref{script-one} equals
$[\partial_b h_r(S \cup c) - \partial_a h_r(S \cup c) + \partial_a h_r(S)]$, the expression in the second set of brackets equals
$[ \partial_b (x_b h_{r-1}(S \cup c)) - h_{r-1}(S \cup c) ]$, and the expression in the third set of brackets equals
$ [ \partial_a (x_a h_{r-1}(S \cup c)) - h_{r-1}(S \cup c)]$.  Plugging all this in yields
\begin{multline}
\label{script-two}
\left[ \partial_b (h_r(S) + x_c h_{r-1}(S \cup c)) - \partial_a(x_c h_{r-1}(S \cup c)) \right]  - [x_b \partial_b h_{r-1}(S \cup c)] + 
[x_a \partial_a h_{r-1}(S \cup c)] \\  = 
[\partial_b h_r(S \cup c) - \partial_a h_r(S \cup c) + \partial_a h_r(S)] -
[ \partial_b (x_b h_{r-1}(S \cup c)) - \cancel{h_{r-1}(S \cup c)} ] \\ + 
 [ \partial_a (x_a h_{r-1}(S \cup c)) - \cancel{h_{r-1}(S \cup c)}]
\end{multline}
with the indicated cancellations.
After performing these cancellations, the RHS of Equation~\eqref{script-two} may be regrouped as 
\begin{multline}
\label{script-three}
[\partial_b h_r(S \cup c) - \partial_a h_r(S \cup c) + \partial_a h_r(S)] -
[ \partial_b (x_b h_{r-1}(S \cup c))]  + 
 [ \partial_a (x_a h_{r-1}(S \cup c))] \\ 
 = \partial_a h_r(S) + \left\{  \partial_b (h_r(S \cup c) - x_b h_{r-1}(S \cup c))  \right\} -
 \left\{ \partial_a(h_r(S \cup c) - x_a h_{r-1}(S \cup c))   \right\}.
\end{multline}
Since the expression $h_r(S \cup c) - x_b h_{r-1}(S \cup c) = h_r( (S \cup c) - b )$ is independent of $x_b$,
the partial derivative $\partial_b$ 
in the first set of curly braces $\{ \, \cdots \}$ on the RHS of Equation~\eqref{script-three} vanishes; the expression
in the second set of curly braces vanishes for similar reasons.
This completes the proof of Equation~\eqref{differential-h-identity}.
\end{proof}

The polynomial identity in Lemma~\ref{miracle-identity} is, to the authors, somewhat miraculous; it would be nice
to have a conceptual understanding of ``why" it should be true.
We use this identity to show that the ideal $\III_J$ generated by the polynomials $p_{J,1}, \dots, p_{J,n} \in \CC[\xx_n]$
contains certain strategic partial derivatives.

\begin{lemma}
\label{script-ideal-containment}
Let $J \subseteq [n]$ and write $\III_J = (p_{J,1}, \dots, p_{J,n}) \subseteq \CC[\xx_n]$ 
for the ideal generated by $p_{J,1}, \dots, p_{J,n}$.
 For any index $j \in J$, we have $\partial_j h_{n-|J|+1}(J) \in \III_J$.
\end{lemma}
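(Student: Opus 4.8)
The strategy is to peel $\partial_j h_{n-|J|+1}(J)$ down to the generators $p_{J,\bullet}$ by iterating the identity of Lemma~\ref{miracle-identity}, following the ``pointer'' that defines the $p_{J,i}$. Write $J = \{j_1 < \cdots < j_m\}$, put $r_0 := n-m+1$, and adopt the conventions $j_0 := 0$, $j_{m+1} := n+1$. The key is to introduce the family of index sets that actually occur along the pointer: for $1 \le t \le m$ and $j_t < a \le j_{t+1}$, set
\[
S_{t,a} := \{j_1, \dots, j_t\} \cup \{a, a+1, \dots, n\}, \qquad r_{t,a} := n - |S_{t,a}| + 1 = a - t.
\]
Reading off the definition of $p_{J,i}$ at $i = a-1$ (which lies in the block range $j_t \le i < j_{t+1}$) gives $p_{J, a-1} = \partial_{j_t} h_{r_{t,a}}(S_{t,a})$, so all of these lie in $\III_J$. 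Two bookkeeping facts are immediate: $S_{t,j_{t+1}} = S_{t-1,j_t}$ with $r_{t,j_{t+1}} = r_{t-1,j_t}$ (blocks overlap at their boundary), and $S_{m,n+1} = J$ with $r_{m,n+1} = r_0$.

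The lemma is then the case $(t,a) = (m,n+1)$ of the following statement, which I would prove by induction on $(t,a)$ ordered lexicographically (first by the block index $t$, then by $a$): \emph{for every $s$ with $1 \le s \le m$ and every $(t,a)$ with $s \le t \le m$ and $j_t < a \le j_{t+1}$, one has $\partial_{j_s} h_{r_{t,a}}(S_{t,a}) \in \III_J$.} When $t = s$ this element is literally $p_{J,a-1}$, so the base cases are free. For $t > s$ there are two sub-cases. If $a = j_t + 1$, then $S_{t,a} = S_{t-1,j_t}$ and $r_{t,a} = r_{t-1,j_t}$, so $\partial_{j_s} h_{r_{t,a}}(S_{t,a})$ is the quantity attached to the strictly earlier pair $(t-1,j_t)$ (note $t-1 \ge s$), handled by induction. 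If $a \ge j_t + 2$, apply Lemma~\ref{miracle-identity} with $S = S_{t,a}$, with the roles of $a,b$ there played by $j_s, j_t \in S_{t,a}$, and with $c = a-1$; this is legitimate because $a-1 \notin S_{t,a}$ and $r_{t,a} = a - t \ge 2$. Since $S_{t,a} \cup \{a-1\} = S_{t,a-1}$ and $r_{t,a} - 1 = r_{t,a-1}$, the identity reads
\[
\partial_{j_s} h_{r_{t,a}}(S_{t,a}) = p_{J,a-1} + (x_{a-1} - x_{j_t})\, p_{J,a-2} - (x_{a-1} - x_{j_s})\, \partial_{j_s} h_{r_{t,a-1}}(S_{t,a-1}),
\]
where the first two terms are generators of $\III_J$ (one checks $a-1, a-2 \in [j_t, j_{t+1})$, so that $p_{J,a-1}, p_{J,a-2}$ are the block-$t$ generators attached to $S_{t,a}, S_{t,a-1}$) and the last term lies in $\III_J$ by the inductive hypothesis at $(t,a-1)$. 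Hence the left side lies in $\III_J$, and taking $(t,a) = (m,n+1)$ yields $\partial_{j_s} h_{r_0}(J) \in \III_J$ for every $s$, i.e. for every $j \in J$.

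The only genuinely delicate point — and the part I would regard as the heart of the argument — is setting up the right inductive statement: guessing the family $\{S_{t,a}\}$, and noticing that for a fixed target $j_s$ the descent in $(t,a)$ need only run down to block $t = s$, never into the ``derivative-free'' block $t = 0$ (where the analogous statement $\partial_{j_s} h_i([i,n]) \in \III_J$ is false, e.g. at $i = 1$). Once this is in place, every correction term produced by Lemma~\ref{miracle-identity} is visibly either one of the generators $p_{J,\bullet}$ or an instance of the induction hypothesis, and the remaining verifications are the routine inequalities $j_t < a \le j_{t+1}$ together with the elementary expansion $h_r(T \cup c) = h_r(T) + x_c h_{r-1}(T \cup c)$ already used in the proof of Lemma~\ref{miracle-identity}. (The case $1 \in J$, where $p_{J,1} = \partial_1 h_1(\{1,\dots,n\}) = 1$ and hence $\III_J = \CC[\xx_n]$, is trivially subsumed.)
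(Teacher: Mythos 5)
Your proof is correct, and it runs on the same engine as the paper's---iterating Lemma~\ref{miracle-identity} along the pointer that defines the $p_{J,i}$---but the induction is packaged quite differently. The paper proves a stronger claim about the sub-ideal $\III'_J = (p_{J,j_0}, \dots, p_{J,n}) \subseteq \CC[x_{j_0}, \dots, x_n]$, $j_0 = \min(J)$, and inducts on the number of variables: one sweep of the miracle identity converts the generators indexed by $j_0, \dots, c_0-1$ (with $c_0$ the smallest non-element of $J$ above $j_0$) into the generators of $\III'_{J'}$ for the rotated set $J' = (J - j_0) \cup c_0$ living in one fewer variable, and the result follows by that structural reduction. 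You instead stay in $\CC[\xx_n]$ throughout and make all intermediate memberships explicit, proving $\partial_{j_s} h_{r_{t,a}}(S_{t,a}) \in \III_J$ by lexicographic induction on $(t,a)$: each use of Lemma~\ref{miracle-identity} produces the two honest generators $p_{J,a-1}, p_{J,a-2}$ plus one lexicographically earlier instance, and the block-boundary identification lets the descent pass from block $t$ to block $t-1$. Your version buys a self-contained induction with no change of ambient ring and no rotation trick, and it isolates exactly which memberships are needed; the paper's version buys the structural observation that the problem for $J$ is the same problem for a smaller $J'$, at the cost of bookkeeping about which variables the generators involve. One typo to fix: your first ``bookkeeping fact'' should read $S_{t,\,j_t+1} = S_{t-1,\,j_t}$ and $r_{t,\,j_t+1} = r_{t-1,\,j_t}$; as written with $j_{t+1}$ it is false unless $j_{t+1} = j_t + 1$. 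The corrected statement is the one you actually invoke in the sub-case $a = j_t + 1$, so the argument itself is unaffected.
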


\begin{proof}
We prove the following claim, which is stronger than the lemma and amenable to induction.

{\bf Claim:}
{\em The polynomials in question lie in the ideal
\begin{equation}
\III'_J := (p_{J,j_0}, p_{J,j_0 + 1}, \dots, p_{J,n}) \subseteq \CC[x_{j_0}, x_{j_0 + 1}, \dots, x_n]
\end{equation}
where $j_0 = \min(J)$ is the smallest element of $J$.}

The pointer construction makes it clear that  the generators of $\III'_J$ do not involve the variables $x_1, x_2, \dots, x_{j_0 - 1}$
and so lie in the polynomial ring $\CC[x_{j_0}, x_{j_0 + 1}, \dots, x_n]$ generated by the remaining variables.
We prove the Claim by induction on the number $n - j_0 + 1$ of variables in the ambient ring of $\III'_J$.

If $J = \{n-r+1, \dots, n-1, n\}$ is a terminal subset of $[n]$, the polynomials in the Claim are generators of the ideal $\III'_J$.
 Furthermore, for any subset $J \subseteq [n]$, if $j = \max(J)$ is the largest element of $J$, then 
$\partial_j h_{n-|J|+1}(J) = p_{J,n}$ is also a generator of $\III'_J$.

By the above paragraph, we may assume that $j_0 = \min(J) \neq \max(J)$ and that there exists an element 
$c \in [n] - J$ with $c > j_0$. Let $c_0 := \min \{ j_0 < c \leq n \,:\, c \notin J \}$ be the smallest such $c$ and define $S \subseteq [n]$
by
\begin{equation}
S := \{ j_0, j_0 + 1, \dots , n-1, n \} - \{ c_0 \}.
\end{equation}
Observe that the elements $j_0, j_0 + 1, \dots, c_0 - 2, c_0 - 1$ of $S$ lie in $J$.
Let $r := n - |S| + 1$. We apply Lemma~\ref{miracle-identity} iteratively as follows.
\begin{itemize}
\item Since $\partial_{c_0 - 1} h_r(S), \partial_{c_0 - 1} h_{r-1}(S \cup c_0), \partial_{c_0 - 2}(S \cup c_0) \in \III'_J$, 
Lemma~\ref{miracle-identity} with $a = c_0 - 2, b = c_0 - 1$, and $c = c_0$ implies $\partial_{c_0 - 2} h_r(S) \in \III'_J$.
\item Since $\partial_{c_0 - 2} h_r(S), \partial_{c_0 - 2} h_{r-1}(S \cup c_0), \partial_{c_0 - 3}(S \cup c_0) \in \III'_J$,
Lemma~\ref{miracle-identity} with $a = c_0 - 3, b = c_0 - 2,$ and $c = c_0$ implies $\partial_{c_0 - 3} h_r(S) \in \III'_J$.
\item Since $\partial_{c_0 - 3} h_r(S), \partial_{c_0 - 3} h_{r-1}(S \cup c_0), \partial_{c_0 - 4}(S \cup c_0) \in \III'_J$,
Lemma~\ref{miracle-identity} with $a = c_0 - 3, b = c_0 - 2,$ and $c = c_0$ implies $\partial_{c_0 - 4} h_r(S) \in \III'_J$, and so on.
\end{itemize}
We see that the polynomials
\begin{equation}
p'_{J,j_0} := \partial_{j_0} h_r(S)  \quad
p'_{J,j_0 + 1} := \partial_{j_0 + 1} h_r(S) \quad  \dots  \quad 
p'_{J,c_0 - 1} := \partial_{c_0 - 1} h_r(S)
\end{equation}
lie in $\III'_J$ so that 
\begin{equation}
\label{half-prime-containment}
(p'_{J,j_0}, p'_{J,j_0 + 1}, \dots, p'_{J,c_0 - 1}, p_{J,c_0 + 1}, p_{J, c_0 + 2}, \dots, p_{J, n}) \subseteq \III'_J
\end{equation}
as ideals in $\CC[x_{j_0}, x_{j_0 + 1}, \dots, x_n]$.
But the generators on the ideal on the LHS of \eqref{half-prime-containment} do not involve the variable $x_{c_0}$.
In fact, if we consider the variable set
\begin{equation}
\xx := (x_{j_0}, x_{j_0 + 1}, \dots, x_{c_0 - 1}, x_{c_0 + 1}, \dots, x_{n-1}, x_n)
\end{equation}
obtained from
 our old variable set $(x_{j_0}, x_{j_0 + 1}, \dots, x_n)$ by removing $x_{c_0}$, then
 \begin{equation}
(p'_{J,j_0}, p'_{J,j_0 + 1}, \dots, p'_{J,c_0 - 1}, p_{J,c_0 + 1}, p_{J, c_0 + 2}, \dots, p_{J, n}) = \III'_{J'}
\end{equation}
as ideals in $\CC[\xx]$ where $J' = (J - j_0) \cup c_0$ is the corresponding cyclic rotation of the set $J$.
Since the variable set $\xx$ contains fewer variables than the original set $\{ x_{j_0}, x_{j_0 + 1}, \dots, x_n \}$, 
we are done by induction.
\end{proof}

An example may help clarify Lemma~\ref{script-ideal-containment} and its proof.  Suppose $n = 7$ and $J = \{3,5,6\}$.
We have $\III_J = (p_{J,1}, \dots, p_{J,7})$ where 
\begin{equation*}
p_{J,1} = h_1(1234567) \quad 
p_{J,2} = h_2(234567) \quad 
p_{J,3} = \partial_3 h_3(34567) \quad
p_{J,4} = \partial_3 h_4(3567)
\end{equation*}
\begin{equation*}
p_{J,5} = \partial_5 h_4(3567) \quad 
p_{J,6} = \partial_6 h_4(3567) \quad
p_{J,7} = \partial_6 h_5(356).
\end{equation*}
Our aim is to show that the ideal $\III_J$ contains the elements
\begin{equation*}
\partial_3 h_5(356), \quad \partial_5 h_5(356), \quad \partial_6 h_5(356).
\end{equation*}
To this end, we reason as follows.
\begin{itemize}
\item The element $\partial_6 h_5(356) = p_{J,7}$ is a generator of $\III_J$. This was one of the desired memberships.
\item Since $\partial_3 h_3(34567) = p_{J,3}, \partial_3 h_4(3567) = p_{J,4},$ and $\partial_5 h_4(3567) = p_{J,5}$ are elements
of $\III_J$, Lemma~\ref{miracle-identity} with $S = \{3,5,6,7\}, a = 3, b = 5,$ and $c = 4$ implies
$\partial_3 h_4(3567) \in \III_J$.
\item Since $\partial_3 h_4(3567), \partial_6 h_4(3567) = p_{J,6},$ and $\partial_6 h_5(356)$ are elements of $\III_J$,
Lemma~\ref{miracle-identity} with $S = \{3,5,6\}, a = 3, b = 6,$ and $c = 7$ implies
$\partial_3 h_5(356) \in \III_J$. This was one of the desired memberships.
\item Since $\partial_5 h_4(3567) = p_{J,5}, \partial_6 h_4(3567) = p_{J,6}, \partial_6 h_5(356) \in \III_J$,
Lemma~\ref{miracle-identity} with $S = \{3,5,6\}, a = 5, b = 6,$ and $c = 7$ implies
$\partial_5 h_5(356) \in \III_J$. This was the remaining desired membership.
\end{itemize}
Observe that we did not use the generators $p_{J,1}, p_{J,2} \in \III_J$ to derive these memberships, so that in fact we showed
membership in the smaller ideal
\begin{equation*}
\III'_J = (p_{J,3}, p_{J,4}, p_{J,5}, p_{J,6}, p_{J,7}) \subseteq \CC[x_3,x_4,x_5,x_6,x_7].
\end{equation*}

\begin{lemma}
\label{regular-sequence-lemma}
Let $J \subseteq [n]$ with $\stair(J) = (\stair(J)_1, \dots, \stair(J)_n)$. If $1 \notin J$,
the sequence of polynomials $p_{J,1},  \dots, p_{J,n}$ is a regular sequence in $\CC[\xx_n]$
of homogeneous degrees $\stair(J)_1, \dots, \stair(J)_n$.
\end{lemma}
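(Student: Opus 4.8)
The plan is to use Lemma~\ref{regular-sequence-criterion}: since $p_{J,1}, \dots, p_{J,n}$ are homogeneous of the stated positive degrees $\stair(J)_i$ (this is immediate from the pointer construction, exactly as in Lemma~\ref{q-polynomial-observations}(2)), it suffices to show that the common zero locus $\VV(p_{J,1}, \dots, p_{J,n}) \subseteq \CC^n$ is just the origin. So the whole proof reduces to: if $\zz = (z_1, \dots, z_n) \in \CC^n$ satisfies $p_{J,i}(\zz) = 0$ for all $i$, then $\zz = 0$.

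First I would dispose of the ``easy'' generators, namely those $p_{J,i} = h_i(\{i, i+1, \dots, n\})$ with $i < \min(J)$. These are exactly the top-degree Newton-type relations showing up in the presentation of $\CC[x_i, \dots, x_n]$ as a coinvariant ring: $h_1(\{i,\dots,n\}), h_2(\{i+1,\dots,n\}), \dots$ Actually, since $1 \notin J$, we have $\min(J) \geq 2$, so $p_{J,1} = h_1(\{1,\dots,n\})$ is among them; and the family $\{p_{J,i} : i < \min(J)\}$ together with the Newton identity trick forces $z_1 = z_2 = \cdots$? Not quite — I need to be careful. Let me instead argue: for $i < \min(J)$, $h_i(\{i,\dots,n\})$ vanishing for all such $i$ is the statement that $(z_1,\dots,z_n)$ lies in the zero locus of $h_1(\{1,\dots,n\}), h_2(\{2,\dots,n\}), \dots, h_{j_0-1}(\{j_0-1,\dots,n\})$ where $j_0 = \min(J)$. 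By the classical theory (these are a ``partial'' regular sequence, e.g. the first $j_0 - 1$ generators of a triangular complete intersection presenting $\CC[\xx_n]$), this cuts the dimension down: I would prefer to phase the argument around the substitution $x_1 = \cdots = x_{j_0 - 1} = 0$, or better, directly show by a descending induction on $k$ from $n$ down that at a common zero one gets $z_k = 0$ for all $k$ not in a controlled exceptional set, then handle the exceptional variables using Lemma~\ref{script-ideal-containment}.

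The cleaner route, and the one I would actually pursue, is to invoke Lemma~\ref{script-ideal-containment} up front: it tells us $\partial_j h_{n-|J|+1}(J) \in \III_J$ for every $j \in J$. Combined with the already-present generators $p_{J,i}$ for $i \geq \min(J)$, which are partial derivatives $\partial_s h_r(J \cup \{i+1, \dots, n\})$ of complete homogeneous polynomials in the ``telescoping'' variable sets $J \cup \{i+1,\dots,n\}$, I expect to be able to reverse-engineer that $\zz$ vanishes on a large complete-intersection locus. Concretely, I would show that the vanishing of all $p_{J,i}$ at $\zz$ (plus the derived elements from Lemma~\ref{script-ideal-containment}) implies the vanishing at $\zz$ of $\partial_j h_{m+1}(\{j_1, \dots, j_{|J|}\})$ for all $j \in J$ where $m = n - |J|$, and simultaneously the vanishing of the ``Artin staircase'' generators in the non-$J$ variables; the standard fact that $\{\partial_j h_{m+1}(S)\}_{j \in S}$ for an $|S|$-element set $S$ cuts out only the origin in $\CC^S$ (this is the harmonic/coinvariant presentation for $\symm_{|S|}$, since $h_{m+1}(S) = h_{|S|}(S)$ generates the relevant ideal up to the action of lowering operators — more precisely the partials of the top $h$ already generate the coinvariant ideal of $\CC[x_s : s \in S]$) then forces $z_j = 0$ for $j \in J$, and the remaining $z_k$ for $k \notin J$ are killed by the telescoping $h$-generators by a downward induction on $k$.

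The main obstacle I anticipate is the bookkeeping in the induction on $k \notin J$ interleaved with the variables in $J$: one must show that after setting the ``$J$-variables'' to zero (justified by the previous paragraph, or done simultaneously), the remaining generators $p_{J,i}$ with $i \geq \min(J)$ specialize to exactly the staircase sequence $h_{\stair(J)_i}(\{i, i+1, \dots, n\} \setminus J)$ (or a nonzero scalar multiple thereof, since differentiating $h_r$ in a variable and then setting other variables to zero behaves predictably), which is a genuine regular sequence in $\CC[x_k : k \notin J, k \geq \min(J)]$ of the right degrees. Verifying that specialization identity — that $\partial_s h_r(J \cup \{i+1,\dots,n\})$, after setting $x_j = 0$ for $j \in J$, equals $h_{r-1}$ of the surviving variables up to a unit — is the one computational lemma I would need to nail down carefully; everything else is assembling Lemma~\ref{regular-sequence-criterion}, Lemma~\ref{script-ideal-containment}, and the classical structure of coinvariant ideals. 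An alternative that sidesteps some of this: prove regularity directly by showing $\CC[\xx_n]/\III_J$ has dimension $\prod_i \stair(J)_i = \prod_i [\stair(J)_i]_q \big|_{q=1}$ via a Hilbert-series upper bound (each generator cuts dimension by at most its degree, and Lemma~\ref{ss-dimension} plus the global count from Proposition~\ref{upper-bound-dimension} pins it down), but I expect the geometric argument above to be more self-contained at this stage of the paper.
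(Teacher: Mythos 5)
Your overall route is the same as the paper's: reduce via Lemma~\ref{regular-sequence-criterion} to showing $\VV(\III_J)=\{0\}$, use Lemma~\ref{script-ideal-containment} to force the $J$-coordinates of a common zero to vanish, then set $x_j=0$ for $j\in J$ and kill the remaining coordinates with a triangular system. The genuine gap is at the pivotal step: the claim that the $|J|$ partials $\partial_j h_{n-|J|+1}(J)$, $j\in J$, have no common nonzero zero in $\CC^J$ is not a standard fact, and your justification for it is incorrect. With $m=n-|J|$ and $S=J$ one has $h_{m+1}(S)=h_{|S|}(S)$ only when $n+1=2|J|$, and the parenthetical assertion that ``the partials of the top $h$ already generate the coinvariant ideal of $\CC[x_s : s\in S]$'' is false: those partials are $|J|$ forms all of the same degree $n-|J|$, so even as a regular sequence they cut out a quotient of dimension $(n-|J|)^{|J|}$, not $|J|!$, and the coinvariant ideal needs generators of degrees $1,2,\dots,|S|$. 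The paper does not prove this vanishing statement either; it imports it from Swanson--Wallach \cite[Lem.~6.2]{SW2} (the same input is reused later in Lemma~\ref{almost-box-h-basis}). Without that citation or an actual argument, your proof is missing exactly the nontrivial ingredient.

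Two secondary issues. First, your specialization formula is off: for $i\notin J$ with $i\geq\min(J)$ the generator $p_{J,i}=\partial_s h_r(J\cup\{i+1,\dots,n\})$ does not involve $x_i$, and setting $x_j=0$ for $j\in J$ gives $h_{\stair(J)_i}(\{i+1,\dots,n\}\setminus J)$, not $h_{\stair(J)_i}(\{i,\dots,n\}\setminus J)$; for the largest non-$J$ index this can vanish identically (e.g.\ $n=7$, $J=\{3,5,6\}$: $p_{J,7}=\partial_6 h_5(356)$ specializes to $0$, and $h_1(1247)$, $h_2(247)$, $x_7^3$ by themselves have nontrivial common zeros). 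So the specialized generators indexed outside $J$ are not by themselves a regular (or even Artinian-cutting) system in the non-$J$ variables; you must also use the specializations of the $J$-indexed generators (e.g.\ $p_{J,3}\mid_{x_J\to 0}=h_2(x_4,x_7)$), or interleave the two families in a downward induction, to conclude the remaining coordinates vanish. Second, your fallback dimension-count alternative is circular at this point of the paper: Proposition~\ref{upper-bound-dimension} rests on Lemma~\ref{straightening-lemma}, which already invokes Lemma~\ref{regular-sequence-lemma}; moreover the inequality ``each generator cuts dimension by at most its degree'' points the wrong way, since failure of regularity makes the quotient infinite-dimensional rather than too small.
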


If $1 \in J$, then $p_{J,1} = \partial_1 h_1(x_1, \dots, x_n) = \partial_1 (x_1 + \cdots + x_n) = 1$
 is a unit in $\CC[\xx_n]$. Correspondingly, we have
$\stair(J)_1 = 0$.
Since members of regular sequences are required to be of positive homogeneous degree, we must exclude this 
case from Lemma~\ref{regular-sequence-lemma}.

\begin{proof}
Since $1 \notin J$, the sequence $\stair(J)$ has positive entries.
The assertion on degrees is Lemma~\ref{q-polynomial-observations} (2).
As in Lemma~\ref{script-ideal-containment}, let 
$\III_J = ( p_{J,1}, \dots, p_{J,n} ) \subseteq \CC[\xx_n]$.
By Lemma~\ref{regular-sequence-criterion}, it is enough to
show that the variety $\VV(\III) \subseteq \CC^n$
cut out by $\III$ consists of $\{ 0 \}$ alone.
We use elimination to focus on coordinates in $\CC^n$ indexed by $J$.

Swanson and Wallach proved \cite[Lem. 6.2]{SW2} that
that the polynomials $\partial_j h_{n-|J|+1}(J)$ for $j \in J$
have no common zero in $\CC^J$. By 
Lemma~\ref{script-ideal-containment}, for any locus point $a = (a_1, \dots, a_n) \in \VV(\III_J)$, 
we must 
have $a_j = 0$ for any $j \in J$. Setting the variables $\{x_j \,:\, j \in J \}$ to zero in the remaining polynomials
\begin{equation}
p_{J,i} \mid_{x_j \rightarrow 0 \text{ for } j \in J}  \quad \quad (i \notin J)
\end{equation}
gives a sequence of positive degree homogeneous polynomials in $\CC[x_i \,:\, i \notin J]$ which are easily seen to be triangular.
We conclude that $a_i = 0$ for $i \notin J$, so that $a = 0$ as required.
\end{proof}

Lemma~\ref{regular-sequence-lemma} implies that the quotient ring $\CC[\xx_n]/(p_{J,1}, \dots, p_{J,n})$ has Hilbert series 
\begin{equation}
\Hilb( \CC[\xx_n]/(p_{J,1}, \dots, p_{J,n}); q) = 
[\stair(J)_1]_q \cdots [\stair(J)_n]_q.
\end{equation}
This formula remains true when $1 \in J$, for then $p_{J,1} = 1$ and $\CC[\xx_n]/(p_{J,1}, \dots, p_{J,n}) = 0$. 
In particular, there exists a set $\BBB_n(J) \subseteq \CC[\xx_n]$ of homogeneous polynomials with degree generating 
function
$[\stair(J)_1]_q \cdots [\stair(J)_n]_q$ such that $\BBB_n(J)$ descends to a vector space basis of 
$\CC[\xx_n]/(p_{J,1}, \dots, p_{J,n})$.

\subsection{An abstract straightening lemma}
The proof of Lemma~\ref{regular-sequence-lemma}  relied on a
a tricky induction in Lemma~\ref{script-ideal-containment}
and miraculous polynomial identity in Lemma~\ref{miracle-identity}. Our next result should  persuade the reader that 
Lemma~\ref{regular-sequence-lemma} was worth the effort.

\begin{lemma}
\label{straightening-lemma}
{\em (Straightening)}
Let $J \subseteq [n]$ with $\stair(J) = (\stair(J)_1, \dots, \stair(J)_n)$.
There exists a finite set $\BBB_n(J) \subseteq \CC[\xx_n]$ of nonzero homogeneous polynomials with 
degree generating function
\begin{equation}
\sum_{m \in \BBB_n(J)} q^{\deg(m)} = [\stair(J)_1]_q [\stair(J)_2]_q \cdots [\stair(J)_n]_q
\end{equation}
such that for any polynomial $f \in \CC[\xx_n]$ we have an expression of the form
\begin{equation}
f \cdot \theta_J = \left(\sum_{m\in \BBB_n(J)} c_{f,m} \cdot m \cdot \theta_J \right) + g + \Sigma
\end{equation}
where
\begin{itemize}
\item the $c_{f,m} \in \CC$ are constants which depend on $f$ and $m$,
\item the element $g \in SI_n$ lies in the supercoinvariant ideal, and
\item the ``error term" $\Sigma$ lies in $\bigoplus_{J <_{\Gale} K} \CC[\xx_n] \cdot \theta_K$.
\end{itemize}
\end{lemma}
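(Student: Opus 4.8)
The plan is to induct on the Gale order on subsets $J \subseteq [n]$ of a fixed cardinality, working downward from the top (the terminal subset $\{n-r+1,\dots,n\}$ when $|J|=r$) toward the bottom. Fix $\BBB_n(J)$ to be the basis of $\CC[\xx_n]/\III_J$ coming out of Lemma~\ref{regular-sequence-lemma} (and Lemma~\ref{straightening-lemma}'s degree count is then exactly the Hilbert series computed there, namely $[\stair(J)_1]_q\cdots[\stair(J)_n]_q$; when $1\in J$ this set is empty, consistent with $\theta_J$ already lying in the error space in a degenerate sense, though actually one should handle $1\in J$ separately since then $p_{J,1}=1$ and $f\cdot\theta_J$ reduces purely to error terms plus an element of $SI_n$). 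The point of the proof is to relate membership in $\III_J$ to membership in $SI_n$ modulo higher-Gale terms.

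First I would establish the base case. When $J$ is the terminal $r$-subset, the sequence $q_{J,1},\dots,q_{J,n}$ lives entirely in $\CC[\xx_n]\cdot\theta_J$ (the pointer never has a $\theta$ strictly to its left with anything to its right to produce a higher-Gale monomial, or more precisely one checks directly that the error term in \eqref{q-p-equivalence} vanishes), so $q_{J,i}=p_{J,i}\cdot\theta_J$ exactly, and these lie in $SI_n$ by Lemma~\ref{q-polynomial-observations}(1). Hence for any $f$, reducing $f$ modulo the ideal $\III_J=(p_{J,1},\dots,p_{J,n})$ using the basis $\BBB_n(J)$ writes $f=\sum_m c_{f,m} m + \sum_i a_i p_{J,i}$ with $a_i\in\CC[\xx_n]$; multiplying by $\theta_J$ gives $f\cdot\theta_J = \sum_m c_{f,m} m\cdot\theta_J + \sum_i a_i q_{J,i}$, and $\sum_i a_i q_{J,i}\in SI_n$. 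So $g=\sum_i a_i q_{J,i}$, $\Sigma=0$, done.

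For the inductive step with $J$ not terminal, I do the same reduction $f=\sum_m c_{f,m} m + \sum_i a_i p_{J,i}$ in $\CC[\xx_n]$, multiply by $\theta_J$, and use \eqref{q-p-equivalence}: $p_{J,i}\cdot\theta_J = q_{J,i} - \Sigma_i$ where $\Sigma_i\in\bigoplus_{J<_\Gale K}\CC[\xx_n]\cdot\theta_K$. Thus
\[
f\cdot\theta_J = \sum_{m\in\BBB_n(J)} c_{f,m}\, m\cdot\theta_J + \sum_i a_i q_{J,i} - \sum_i a_i\Sigma_i,
\]
where $\sum_i a_i q_{J,i}\in SI_n$ and $\sum_i a_i\Sigma_i = \sum_{J<_\Gale K} h_K\cdot\theta_K$ for some polynomials $h_K$. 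Now each $h_K\cdot\theta_K$ with $K>_\Gale J$ can be straightened by the inductive hypothesis applied to $K$: write $h_K\cdot\theta_K = \sum_{m'\in\BBB_n(K)} c'_{m'} m'\cdot\theta_K + g_K + \Sigma_K$ with $g_K\in SI_n$ and $\Sigma_K\in\bigoplus_{K<_\Gale L}\CC[\xx_n]\cdot\theta_L$. Folding these in, the $SI_n$-parts collect into a single $g\in SI_n$, and the leftover terms $\sum_{m'}c'_{m'}m'\cdot\theta_K + \Sigma_K$ all live in $\bigoplus_{J<_\Gale L}\CC[\xx_n]\cdot\theta_L$, hence contribute to $\Sigma$. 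This gives the claimed expression; the $\BBB_n(J)$-part is untouched by the correction process, so the coefficients $c_{f,m}$ are exactly those from the original reduction mod $\III_J$.

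The main obstacle is purely bookkeeping: ensuring the induction is well-founded and the error terms cascade correctly. Gale order on $r$-subsets is a finite poset with a unique maximum, so downward induction from the top is legitimate, and at each stage every correction term $h_K\theta_K$ has $K$ strictly above $J$, so the recursion on "height above $J$ in Gale order" terminates — one should phrase the induction as being on the rank of $J$ in the Gale poset (or on $\sum_i(\text{something})$), being careful that the $\Sigma_K$ produced by the inductive calls are themselves supported strictly above $K$, hence strictly above $J$, so nothing leaks back down. A secondary point worth a sentence: the set $\BBB_n(J)$ must be fixed once and for all (it is the one from the regular-sequence quotient), independent of $f$, which is automatic since the basis of $\CC[\xx_n]/\III_J$ doesn't depend on $f$; only the coefficients $c_{f,m}$ do. No deep input beyond Lemmas~\ref{q-polynomial-observations} and~\ref{regular-sequence-lemma} and the congruence \eqref{q-p-equivalence} is needed.
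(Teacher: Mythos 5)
Your proof is correct and its core step is exactly the paper's: reduce $f$ modulo $(p_{J,1},\dots,p_{J,n})$ against a homogeneous basis $\BBB_n(J)$ of the quotient (whose degree generating function is $[\stair(J)_1]_q\cdots[\stair(J)_n]_q$ by Lemma~\ref{regular-sequence-lemma}), multiply by $\theta_J$, and replace each $p_{J,i}\cdot\theta_J$ by $q_{J,i}$ via \eqref{q-p-equivalence}, taking $g=\sum_i A_i\, q_{J,i}\in SI_n$. The downward Gale induction you add is superfluous for the lemma as stated, since the leftover terms $-\sum_i A_i\Sigma_i$ already lie in $\bigoplus_{J<_\Gale K}\CC[\xx_n]\cdot\theta_K$ and may simply be taken as $\Sigma$; that cascading argument is exactly what the paper carries out \emph{after} the lemma, in showing that $\BBB_n$ spans $SR_n$.
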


\begin{proof}
As explained after Lemma~\ref{regular-sequence-lemma}, there exists a set $\BBB_n(J) \subseteq \CC[\xx_n]$ of homogeneous
polynomials with the given degree generating function which descends to a vector space basis of 
$\CC[\xx_n]/(p_{J,1}, \dots, p_{J,n})$. We prove that $\BBB_n(J)$ satisfies the conditions of the lemma.

The given polynomial $f \in \CC[\xx_n]$ may be written as
\begin{equation}
\label{straighten-one}
f  = \left( \sum_{m \in \BBB_n(J)} c_{f,m} \cdot m \right) + \sum_{j = 1}^n A_j \cdot p_{J,j}
\end{equation}
for some scalars $c_{f,m} \in \CC$ and polynomials $A_j \in \CC[\xx_n]$.
Multiplying both sides of Equation~\eqref{straighten-one} by $\theta_J$ yields 
\begin{equation}
\label{straighten-two}
f \cdot \theta_J  = \left( \sum_{m \in \BBB_n(J)} c_{f,m} \cdot m \cdot \theta_J \right) + \sum_{j = 1}^n A_j \cdot p_{I,j} \cdot \theta_J.
\end{equation}
Equation~\eqref{q-p-equivalence} gives the relation
\begin{equation}
\label{straighten-three}
f \cdot \theta_J \equiv \left( \sum_{m \in \BBB_n(J)} c_{f,m} \cdot m \cdot \theta_J \right) + \sum_{j = 1}^n A_j \cdot q_{J,j}
\mod \bigoplus_{J <_\Gale K} \CC[\xx_n] \cdot \theta_K
\end{equation}
modulo the linear subspace 
$\bigoplus_{J <_\Gale K} \CC[\xx_n] \cdot \theta_K$ of $\Omega_n$. Finally,
Lemma~\ref{q-polynomial-observations} (1) implies the membership 
$g := \sum_{j = 1}^n A_j \cdot q_{J,j} \in SI_n$, which completes the proof.
\end{proof}

Lemma~\ref{straightening-lemma} implies that the set $\BBB_n \subseteq \Omega_n$ of superspace elements given by
\begin{equation}
\label{B-spanning-set}
\BBB_n := \bigsqcup_{J \subseteq [n]} \BBB_n(J) \cdot \theta_J
\end{equation}
descends to a spanning set in $SR_n$.
Indeed, if this were not the case, let $J \subseteq [n]$ be a Gale-maximal subset such that $f \cdot \theta_J \in \Omega_n$ 
does not lie in the span 
of $\BBB_n$ modulo $SI_n$ for some $f \in \CC[\xx_n]$.  Lemma~\ref{straightening-lemma} implies
that 
\begin{equation}
f \cdot \theta_J \equiv \left(\sum_{m\in \BBB_n(J)} c_{f,m} \cdot m \cdot \theta_J \right) + \Sigma \quad \mod SI_n
\end{equation}
for some constants $c_{f,m} \in \CC$ where $\Sigma \in \bigoplus_{J <_{\Gale} K} \CC[\xx_n] \cdot \theta_K$.
The term in the parentheses certainly lies in the span of $\BBB_n$. The Gale-maximality of $J$ implies that 
$\Sigma$ lies in the span of $\BBB_n$, as well, giving a contradiction.

The straightening result of Lemma~\ref{straightening-lemma} is rather abstract in that it does not give a 
 formula for the polynomials in $\BBB_n(J)$. 
 While any generic set of polynomials of the appropriate degrees will do, the authors are unaware of an explicit formula
 for the set $\BBB_n(J)$. In general, objects related to $SR_n$ have resisted analysis by Gr\"obner-theoretic techniques,
 which is reflected in the abstract statement of Lemma~\ref{straightening-lemma}.

 Lemma~\ref{straightening-lemma} implies an upper bound for the bigraded Hilbert series
of $SR_n$.  Given two polynomials $f(q,z), g(q,z)$ in variables $q, z$, we write $f \leq g$ to mean
 that $g - f$ is a polynomial in $q,z$ with nonnegative coefficients.

\begin{proposition}
\label{upper-bound-dimension}
The bigraded Hilbert series $\Hilb(SR_n;q,z)$ is bounded above by
\begin{equation}
\Hilb(SR_n;q,z) \leq \sum_{J \subseteq [n]} z^{|J|} \sum_{f \in \BBB_n(J)} q^{\deg(f)} = \sum_{k = 1}^n z^{n-k} \cdot [k]!_q \cdot \Stir_q(n,k).
\end{equation}
\end{proposition}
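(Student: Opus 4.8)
The plan is that essentially all of the difficulty has already been absorbed into the preceding lemmas, so the proof is a bookkeeping argument combining the straightening result (Lemma~\ref{straightening-lemma}) with the Sagan--Swanson identity (Lemma~\ref{ss-dimension}).

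First I would record that the set $\BBB_n = \bigsqcup_{J \subseteq [n]} \BBB_n(J) \cdot \theta_J$ descends to a spanning set of $SR_n$. This is the consequence of Lemma~\ref{straightening-lemma} spelled out in the paragraph following its proof: taking a Gale-maximal $J \subseteq [n]$ for which some $f \cdot \theta_J$ fails to lie in the span of $\BBB_n$ modulo $SI_n$, the straightening lemma expresses $f \cdot \theta_J$ as a $\CC$-linear combination of the elements $m \cdot \theta_J$ with $m \in \BBB_n(J)$, plus an element of $SI_n$, plus an error term in $\bigoplus_{J <_\Gale K} \CC[\xx_n] \cdot \theta_K$; Gale-maximality of $J$ forces the error term into the span, a contradiction. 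Since $SI_n$ is generated by bihomogeneous elements it is a bihomogeneous ideal, so $SR_n$ inherits the bosonic/fermionic bigrading of $\Omega_n$, and each spanning element $m \cdot \theta_J$ is bihomogeneous of bosonic degree $\deg(m)$ and fermionic degree $|J|$.

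Next I would pass to bihomogeneous components. A bihomogeneous spanning set of a bigraded vector space restricts to a spanning set of each bihomogeneous piece, so
\[
\dim_\CC (SR_n)_{i,j} \;\leq\; \#\{\, (J,m) \,:\, J \subseteq [n],\ |J| = j,\ m \in \BBB_n(J),\ \deg(m) = i \,\},
\]
which is exactly the statement $\Hilb(SR_n;q,z) \leq \sum_{J \subseteq [n]} z^{|J|} \sum_{m \in \BBB_n(J)} q^{\deg(m)}$. By construction (Lemma~\ref{regular-sequence-lemma} together with the complete-intersection Hilbert series formula, valid also when $1 \in J$ since then $\BBB_n(J) = \varnothing$ and $[\stair(J)_1]_q = 0$), the degree generating function of $\BBB_n(J)$ equals $[\stair(J)_1]_q \cdots [\stair(J)_n]_q$. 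Therefore the right-hand side equals $\sum_{J \subseteq [n]} \bigl( \prod_{i=1}^n [\stair(J)_i]_q \bigr) z^{|J|}$, and Lemma~\ref{ss-dimension} identifies this with $\sum_{k=1}^n z^{n-k} \cdot [k]!_q \cdot \Stir_q(n,k)$, completing the proof.

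The only thing that needs a little care --- rather than a genuine obstacle --- is the bigrading bookkeeping: one must confirm that $SR_n$ really does inherit the bigrading from $\Omega_n$ (because $SI_n$ is bihomogeneous) so that ``spanning modulo $SI_n$'' yields componentwise dimension bounds. Everything genuinely hard, namely the miraculous identity of Lemma~\ref{miracle-identity}, the regular sequence of Lemma~\ref{regular-sequence-lemma}, and the straightening of Lemma~\ref{straightening-lemma}, is already established.
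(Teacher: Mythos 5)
Your proof is correct and follows essentially the same route as the paper: the paper's proof is exactly the spanning-set consequence of Lemma~\ref{straightening-lemma} (spelled out in the paragraph after that lemma), the degree generating function $[\stair(J)_1]_q \cdots [\stair(J)_n]_q$ for $\BBB_n(J)$, and Lemma~\ref{ss-dimension}. Your extra care about the bihomogeneity of $SI_n$ and passing to bigraded components is a correct, if routine, piece of bookkeeping that the paper leaves implicit.
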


\begin{proof}
As explained above, Lemma~\ref{straightening-lemma} implies that $\BBB_n = \bigsqcup_{J \subseteq [n]} \BBB_n(J)$ descends
 to a spanning set of $SR_n$. Since $\sum_{m \in \BBB_n(J)} q^{\deg(m)} = [\stair(J)_1]_q \cdots [\stair(J)_n]_q$,
 the result follows from Lemma~\ref{ss-dimension}.
\end{proof}

\section{Differential operators and colon ideals}
\label{Differential}

The straightening result of Lemma~\ref{straightening-lemma} led to the upper bound on
the dimension of $SR_n$ in Proposition~\ref{upper-bound-dimension}.
Our next task is to bound this dimension from below.
To this end, we define strategic differential operators $\DD_J$ whose action on $\CC[\xx_n]$ has Gale maximum term $\theta_J$.
Analysis of this leading term will lead to finding a lower bound for quotient rings of the form $\CC[\xx_n] / (I_n : f_J)$
where  $I_n \subseteq \CC[\xx_n]$ is the classical coinvariant ideal and 
the $f_J \in \CC[\xx_n]$ are products of linear forms determined by $\DD_J$.
It will turn out (Theorem~\ref{colon-ideal-identification}) that $(I_n : f_J)$ is generated by the regular sequence
$p_{J,1}, \dots , p_{J,n}$ of Lemma~\ref{regular-sequence-lemma}. Together with the triangularity property of the $\DD_J$,
this will lead to the required lower bound on $SR_n$.

\subsection{The differential operators $\DD_J$}
Let $\HHH$ be the $n \times n$ matrix of complete homogeneous symmetric polynomials whose row $i$, column $j$
entry is given by
\begin{equation}
\HHH :=   \left(   h_{i-j}(x_i, x_{i+1}, \dots, x_n)   \right)_{\substack{1 \leq i \leq n \\ 1 \leq j \leq n}}.
\end{equation}
We have $h_0 = 1$ and interpret $h_{j-i} = 0$ whenever $i > j$, so the matrix $\HHH$ is lower triangular with 1's on the diagonal.
We use the matrix $\HHH$ to define a family of differential operators as follows.
Given a subset $K \subseteq [n]$, we introduce the `reversal' notation
\begin{equation}
K^* := \{ n-k+1 \,:\, k \in K \}.
\end{equation}

\begin{defn}
\label{d-operator-definition}
For any subset $J \subseteq [n]$, define a differential operator $\DD_J$ acting on $\Omega_n$ by
\begin{equation}
\DD_J(f) := \sum_{|I| = |J|} (-1)^{\sum I} \Delta_{[n] - J, ([n] - I)^*}(\HHH) \odot d_I (f)
\end{equation}
where $\Delta_{[n] - J, [n] - I}(\HHH) \in \CC[\xx_n]$ is the minor of $\HHH$ with row set $[n] - J$ 
and column set $([n] - I)^*$.
\end{defn}

Since the matrix $\HHH$ is lower triangular, the coefficient of $d_I$ in $\DD_J$ is zero
unless we have $I^* \leq_\Gale J$ in Gale order.
As an example, when $n = 3$ the matrix $\HHH$ is given by 
\begin{scriptsize}
\begin{equation*}
\HHH = 
\begin{pmatrix}
1 & 0 & 0  \\
x_2 + x_3 &  1 & 0  \\
x_3^2 & x_3 & 1
\end{pmatrix}
\end{equation*}
\end{scriptsize}
and we have the differential operators
\begin{scriptsize}
\begin{align*}
\DD_{12}(f) &= - \Delta_{3,1}(\HHH) \odot d_{12}(f) + \Delta_{3,2}(\HHH) \odot d_{13}(f) - \Delta_{3,3}(\HHH) \odot d_{23}(f) \\
\DD_{13}(f) &= - \Delta_{2,1}(\HHH) \odot d_{12}(f) + \Delta_{2,2}(\HHH) \odot d_{13}(f) - \cancel{\Delta_{2,3}(\HHH)} \odot d_{23}(f) \\
\DD_{23}(f) &= - \Delta_{1,1}(\HHH) \odot d_{12}(f) + \cancel{\Delta_{1,2}(\HHH)} \odot d_{13}(f) - \cancel{\Delta_{1,3}(\HHH)} \odot d_{23}(f) 
\end{align*}
\end{scriptsize}
acting on superspace elements $f \in \Omega_3$ where the indicated minors of $\HHH$ vanish for support reasons.
Applying the formula 
$d_i(f) = ( x_1^i \odot f ) \theta_1 + (x_2^i \odot f) \theta_2 + (x_3^i \odot f) \theta_3$, these operators may be expressed in 
the more illuminating form
\begin{scriptsize}
\begin{align*}
\DD_{12}(f) &= 
( x_1 (x_1 - x_2) (x_1 - x_3) x_2 (x_2 - x_3) )
\odot f \cdot  \theta_1 \theta_2 \\
\DD_{13}(f) &=
(   x_1^2 x_2^2 + x_1^2 x_2 x_3 - x_1 x_2^2 x_3 - x_1^3 x_3  )
 \odot f \cdot \theta_1 \theta_2 -
 (x_1 (x_1 - x_2) (x_1 - x_3) x_3) 
 \odot f \cdot \theta_1 \theta_3 \\
\DD_{23}(f) &=
(x_1^2 x_2 - x_1 x_2^2) \odot f \cdot \theta_1 \theta_2 +
(x_1^2 x_3 - x_1 x_3^2) \odot f \cdot \theta_1 \theta_3 +
(x_2 (x_2 - x_3) x_3) \odot f \cdot \theta_2 \theta_3
\end{align*}
\end{scriptsize}
which reveals a triangularity property with respect to the fermionic monomials 
$\theta_1 \theta_2, \theta_1 \theta_3,$ and $\theta_2 \theta_3$.
Furthermore, the `leading coefficient' $\theta_J$ involved in $\DD_J$ has the form
$f_J \odot (-)$ up to a sign where the polynomials $f_J$ were defined in the introduction.
We will show that this is a general phenomenon.
Our first lemma in this direction is a simple result on the application of the $d_I$ operator to polynomials in $\CC[\xx_n]$;
its proof is left to the reader.

\begin{lemma}
\label{image-of-derivative}
Let $f \in \CC[\xx_n]$ be a polynomial and let $I = \{ i_1 < \cdots < i_r \}$ and 
$K = \{k_1 < \cdots < k_r \}$ be two subsets of $[n]$ of the same size.
The coefficient of $\theta_K$ in $d_I(f) \in \Omega_n$ is the determinant of partial derivatives
\begin{equation}
\begin{vmatrix}
\partial_{k_1}^{i_1} f &  & \cdots & &   \partial_{k_1}^{i_r} f \\
& & & & \\
\vdots & & & & \vdots \\
& & & & \\
\partial_{k_r}^{i_1} f &  & \cdots & &   \partial_{k_r}^{i_r} f \\
\end{vmatrix}.
\end{equation}
\end{lemma}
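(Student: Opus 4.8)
The plan is to unwind the definition of the higher Euler operators $d_{i_1},\dots,d_{i_r}$ and apply them to $f$ one at a time, recording which fermionic generator gets attached at each stage. First I would note the elementary identity: for a polynomial $g\in\CC[\xx_n]$ and a subset $S\subseteq[n]$,
\begin{equation*}
d_j(g\cdot\theta_S)=\sum_{a=1}^n(\partial_a^j g)\cdot\theta_S\theta_a=\sum_{a\notin S}(\partial_a^j g)\cdot\theta_S\theta_a,
\end{equation*}
since $\partial_a^j$ acts only on the polynomial tensor factor and the terms with $a\in S$ die because $\theta_a\theta_a=0$. Iterating this, starting from $d_{i_r}(f)=\sum_{a_r}(\partial_{a_r}^{i_r}f)\,\theta_{a_r}$ and successively applying $d_{i_{r-1}},\dots,d_{i_1}$ (recall $d_I=d_{i_1}\circ\cdots\circ d_{i_r}$), yields
\begin{equation*}
d_I(f)=\sum_{a_1,\dots,a_r=1}^n\bigl(\partial_{a_1}^{i_1}\partial_{a_2}^{i_2}\cdots\partial_{a_r}^{i_r}f\bigr)\cdot\theta_{a_r}\theta_{a_{r-1}}\cdots\theta_{a_1},
\end{equation*}
where only tuples $(a_1,\dots,a_r)$ with pairwise distinct entries survive.

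Next I would extract the coefficient of $\theta_K$ for $K=\{k_1<\cdots<k_r\}$. Restricting the sum to tuples whose underlying set is $K$ amounts to summing over permutations $\sigma\in\symm_r$ via $a_b=k_{\sigma(b)}$. For such a tuple, reordering the fermionic product $\theta_{a_r}\cdots\theta_{a_1}=\theta_{k_{\sigma(r)}}\cdots\theta_{k_{\sigma(1)}}$ into the increasing product $\theta_K=\theta_{k_1}\cdots\theta_{k_r}$ produces the sign of the permutation $b\mapsto\sigma(r+1-b)$; together with the order reversal this is $\sign(\sigma)$ up to the fixed factor $(-1)^{\binom r2}$. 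Since all of the operators $\partial_{k_a}^{i_b}$ are powers of commuting partial derivatives, the resulting expression $\sum_{\sigma\in\symm_r}\sign(\sigma)\,\partial_{k_{\sigma(1)}}^{i_1}\cdots\partial_{k_{\sigma(r)}}^{i_r}f$ is exactly the Leibniz expansion of the determinant of the matrix of (commuting) differential operators $\bigl(\partial_{k_a}^{i_b}\bigr)_{1\le a,b\le r}$ applied to $f$, which is the determinant in the statement. The sign $(-1)^{\binom r2}$ is absorbed by the conventions fixing the composition order in $d_I$ and the increasing order in $\theta_K$: reversing the composition order of the $r$ pairwise anticommuting operators $d_{i_1},\dots,d_{i_r}$ contributes precisely the compensating factor, so no global sign survives.

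I expect no genuine obstacle here — this is exactly why the authors leave it to the reader. The one point requiring a little care is the bookkeeping of the reordering sign and matching it against the Leibniz formula; there is no subtlety on the operator side, because powers of partial derivatives commute and hence the "operator determinant'' in the statement is unambiguously defined.
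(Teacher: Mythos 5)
The paper gives no argument for this lemma (it is explicitly left to the reader), and your direct expansion is exactly the intended verification: iterate the definition of the higher Euler operators, note that only tuples of pairwise distinct indices survive, and match the fermionic reordering sign against the Leibniz expansion of the (commuting) operator determinant applied once to $f$ — which is indeed the correct reading of the displayed determinant, consistent with its use in Definition~\ref{fjk-definition} as $\left| x_k^i \right| \odot f$. The only loose point is your last step. With the usual convention that the product $d_I = d_{i_1} d_{i_2} \cdots d_{i_r}$ acts with $d_{i_r}$ first (the order you yourself use in the expansion), your computation gives the coefficient of $\theta_K$ as $(-1)^{\binom{r}{2}} \det\left( \partial_{k_a}^{i_b} \right) f$; the factor $(-1)^{\binom{r}{2}}$ vanishes only under the opposite reading of the product, and since $d_i d_j = -d_j d_i$ the two readings differ by exactly this factor. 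So ``absorbed by the conventions'' is a choice of convention rather than an argument; it is harmless here, because every downstream use of the lemma (the identity $\DD_J(f) = \sum_K (\FF_{J,K} \odot f)\,\theta_K$, the triangularity and $\FF_{J,J} = \pm f_J$ in Lemma~\ref{f-product}, and the $\doteq$ statements in Section~\ref{Operator}) is insensitive to an overall sign depending only on $|I|$, but a clean write-up should fix the composition order explicitly and record the resulting sign.
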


Definition~\ref{d-operator-definition} and Lemma~\ref{image-of-derivative} motivate 
the following family of polynomials $\FF_{J,K} \in \CC[\xx_n]$ indexed by pairs of subsets  $J, K \subseteq [n]$.
The definition of the $\FF_{J,K}$ also involves the matrix $\HHH$.

\begin{defn}
\label{fjk-definition}
Let $J$ and $K$ be two subsets of $[n]$ of the same size. 
Define a polynomial $\FF_{J,K} \in \CC[\xx_n]$ by
\begin{equation}
\FF_{J,K} :=  \sum_{|I| = |J| = |K|} (-1)^{\sum I} \Delta_{[n] - J, ([n] - I)^*}(\HHH) \cdot \left|  x_k^i   \right|_{k \in K, i \in I} 
\end{equation}
where the row and column indices in the determinant 
$\left|  x_k^i   \right|_{k \in K, i \in I}$ are written in increasing order.
\end{defn}

The differential operators $\DD_J$ and the polynomials $\FF_{J,K}$ are related by
\begin{equation}
\DD_J(f) = \sum_{|K| = |J|}  \left(  \FF_{J,K} \odot f \right) \times \theta_K
\end{equation}
for all $f \in \CC[\xx_n]$.

\begin{remark}
The polynomial $\Delta_{[n] - J, ([n] - I)^*}(\HHH)$ appearing in Definition~\ref{fjk-definition} is (up to variable reversal)
a flagged skew Schur polynomial whose flagging parameter depends on $J$ and whose shape depends on $I$ and $J$,
as may be seen from the Jacobi-Trudi formula.
This is how the $\FF_{J,K}$  were  discovered, but their matrix minor formulation is more convenient for our
purposes.
\end{remark}

We aim to show that the $\FF_{J,K}$ are triangular with respect to Gale order.
As a first step, we express $\FF_{J,K}$ as a single $n \times n$ determinant.

\begin{lemma}
\label{big-determinant}
Let $J = \{ j_1 < \cdots < j_r \}$ and $K = \{ k_1 < \cdots < k_r \}$ be two subsets of $[n]$ of the same size. 
Write $b(J) = (b(J)_1 < b(J)_2 < \cdots )$ for the entries in the complement $[n] - J$ of the set $J$, written in increasing order.
Define an $n \times n$
matrix $A_{J,K}$ in block form
\begin{equation}
A_{J,K} = \begin{pmatrix} B_{J,K} \\ C_{J,K} \end{pmatrix}
\end{equation}
where the top block $B_{J,K}$ has size $r \times n$ and entries
\begin{equation}
B_{J,K} = \begin{pmatrix}  
x_{k_1}^n  &  &  \cdots  & &  x_{k_1}^1 \\
\vdots   & & & & \vdots \\
x_{k_r}^n  & &  \cdots & &  x_{k_r}^1 \\
\end{pmatrix}
\end{equation} 
and the bottom block $C_{J,K}$ has size $(n-r) \times n$ and entries
\begin{equation}
C_{J,K} = ( h_{b(J)_i - j}(x_{b(J)_i}, x_{b(J)_i + 1}, \dots, x_n) )_{1 \leq i \leq n-r, \, \, 1 \leq j \leq n}.
\end{equation}
We have $\FF_{J,K} = \pm \det(A_{J,K})$.
\end{lemma}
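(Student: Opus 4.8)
The plan is to prove Lemma~\ref{big-determinant} by a single generalized Laplace (block cofactor) expansion of $\det(A_{J,K})$ along its top $r=|J|$ rows, that is, along the block $B_{J,K}$, and then to match the resulting products of complementary minors term-by-term with the summands of Definition~\ref{fjk-definition}. Morally this is one Cauchy--Binet--type identity decorated with sign bookkeeping, so the steps are: (i) identify the minors of $C_{J,K}$ with minors of $\HHH$; (ii) identify the minors of $B_{J,K}$ with the Vandermonde-type determinants $|x_k^i|$; (iii) reindex the Laplace sum using the reversal operation $*$; (iv) collect the signs.

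First I would record the structural observation underlying (i): the bottom block $C_{J,K}$ is exactly the row-submatrix of $\HHH$ on the row set $[n]-J$, since its $i$-th row is row $b(J)_i$ of $\HHH$ because $h_{b(J)_i-j}(x_{b(J)_i},\dots,x_n)=\HHH_{b(J)_i,\,j}$. Hence, for any $r$-element column set $S\subseteq[n]$, the maximal minor of $C_{J,K}$ obtained by deleting the columns in $S$ equals $\Delta_{[n]-J,\,[n]-S}(\HHH)$. For (ii): the top block $B_{J,K}$ is a ``reversed Vandermonde'', its column $j$ carrying the exponent $n-j+1$; thus the $r\times r$ minor of $B_{J,K}$ on a column set $S$ is, after sorting the columns into increasing exponent order at a sign cost of $(-1)^{\binom r2}$, equal to $\bigl|x_k^i\bigr|_{k\in K,\,i\in I}$ with $I:=S^*$.

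Next I would apply the Laplace expansion along rows $\{1,\dots,r\}$,
\begin{equation*}
\det(A_{J,K})=\sum_{|S|=r}(-1)^{\binom{r+1}{2}+\sum_{j\in S}j}\,\det\bigl((B_{J,K})_{[r],S}\bigr)\cdot\Delta_{[n]-J,\,[n]-S}(\HHH),
\end{equation*}
and reindex the sum by $I=S^*$. The point that makes the reindexing align with Definition~\ref{fjk-definition} is that $K\mapsto K^*$ is an involution commuting with complementation in $[n]$, so $[n]-S=[n]-I^*=([n]-I)^*$ is precisely the column set appearing there. Substituting $\det\bigl((B_{J,K})_{[r],S}\bigr)=(-1)^{\binom r2}\bigl|x_k^i\bigr|_{k\in K,\,i\in I}$ together with $\sum_{j\in S}j=\sum_{i\in I}(n-i+1)=r(n+1)-\sum_{i\in I}i$, the accumulated sign becomes $(-1)^{\binom{r+1}{2}+\binom r2+r(n+1)}\cdot(-1)^{\sum I}$; the first factor depends only on $n$ and $r=|J|$, so it pulls out of the sum, leaving exactly $\pm\,\FF_{J,K}$.

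I expect the only genuinely fiddly part to be this sign chase: simultaneously keeping straight the Laplace sign $(-1)^{\binom{r+1}{2}+\sum S}$, the column-reversal sign $(-1)^{\binom r2}$, and the sign $(-1)^{\sum I}$ built into $\FF_{J,K}$, while using $([n]-I)^*=[n]-I^*$ to line up the $\HHH$-minors. Nothing here is deep, but it is the kind of computation where an off-by-one in an exponent silently flips the global sign. Since the lemma only claims equality up to sign, a clean way to avoid over-careful bookkeeping is to track signs only up to the global $\pm1$ ambiguity and verify at the end that the leftover sign is independent of $I$ (equivalently, of $K$), which is all that is needed.
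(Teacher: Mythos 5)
Your proposal is correct and follows essentially the same route as the paper: the paper's proof is exactly a generalized Laplace expansion of $\det(A_{J,K})$ along the top block $B_{J,K}$ with sign $(-1)^{\sum I - \binom{r+1}{2}}$, followed by ``compare with the definition of $\FF_{J,K}$.'' You simply make explicit the details the paper leaves implicit (that $C_{J,K}$ is the row-submatrix of $\HHH$ on rows $[n]-J$, the column reversal of $B_{J,K}$ costing $(-1)^{\binom{r}{2}}$, the reindexing $S\mapsto S^{*}$ using $([n]-I)^{*}=[n]-I^{*}$, and the observation that the leftover sign is independent of $I$), all of which check out.
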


\begin{proof}
The determinant $\det(A_{J,K})$ may be evaluated using the rule
\begin{equation}
\label{big-matrix-one}
 \det(A_{J,K}) = 
\sum_{\substack{I \subseteq [n] \\ |I| = r}} (-1)^{\sum I - {r +1 \choose 2}} \cdot 
\Delta_I(B_{J,K}) \cdot \Delta_{[n]-I}(C_{J,K})
\end{equation}
where  $\Delta_I(B_{J,K})$ is the maximal minor of $B_{J,K}$ with column set $I$ and
$\Delta_{[n]-I}(C_{J,K})$ is the maximal minor of $C_{J,K}$ with  complementary column set $[n] - I$.
Now compare with the definition of $\FF_{J,K}$.
\end{proof}

To illustrate Lemma~\ref{big-determinant}, we let $n = 5$, $J = \{1,3\}$, and write $K = \{a,b\}$ for $1 \leq a < b \leq 5$.
Lemma~\eqref{big-determinant} expresses $\FF_{J,K} = \FF_{13,ab}$ as the following $5 \times 5$ determinant.
\begin{scriptsize}
\begin{equation*}
\FF_{13,ab} = \pm
\begin{vmatrix}
x_a^5 & x_a^4 & x_a^3 & x_a^2 & x_a^1 \\
x_b^5 & x_b^4 & x_b^3 & x_b^2 & x_b^1 \\
h_1(2345) & 1 & 0 & 0 & 0 \\
h_3(45) & h_2(45) & h_1(45) & 1 & 0 \\
h_4(5) & h_3(5)  & h_2(5)   & h_1(5) & 1
\end{vmatrix}.
\end{equation*}
\end{scriptsize}

The determinant in Lemma~\ref{big-determinant} may be evaluated to give the desired triangularity 
relation for the polynomials $\FF_{J,K}$.
Lemma~\ref{big-determinant} will also imply that the $\FF_{J,J}$ are given by a  family 
$f_J \in \CC[\xx_n]$ of polynomials defined as follows.

\begin{defn}
For any subset $J \subseteq [n]$, let $f_J \in \CC[\xx_n]$ be the polynomial
\begin{equation}
f_J := \prod_{j \in J} x_j \left( \prod_{i = j+1}^n (x_j - x_i)  \right).
\end{equation}
\end{defn}

Observe that the $f$-polynomial corresponding to a set $J$ factors  $f_J = \prod_{j \in J} f_{\{j\}}$ into $f$-polynomials
corresponding to singletons contained in $J$. 
The polynomials $f_J \in \CC[\xx_n]$ will have deep ties to the supercoinvariant ring $SR_n$.
For later use, we record a criterion for when $f_J$ lies in the classical coinvariant ideal $I_n \subseteq \CC[\xx_n]$.

\begin{lemma}
\label{f-containment-criterion}
Let $J \subseteq [n]$. We have $f_J \in I_n$ if and only if $1 \in J$.
\end{lemma}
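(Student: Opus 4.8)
The plan is to handle the two implications of the biconditional separately, in both cases exploiting the factorization $f_K = \prod_{k\in K} f_{\{k\}}$ recorded just before the statement. For the easy implication ($1\in J \Rightarrow f_J\in I_n$): since $f_{\{1\}}$ divides $f_J$ and $I_n$ is an ideal, it suffices to show $f_{\{1\}} = x_1\prod_{i=2}^{n}(x_1-x_i)\in I_n$. Expanding, $f_{\{1\}} = \sum_{l=0}^{n-1}(-1)^l e_l(x_2,\dots,x_n)\,x_1^{\,n-l}$. Reducing modulo $I_n$, the identity $e_l(x_2,\dots,x_n) = e_l(\xx_n) - x_1\,e_{l-1}(x_2,\dots,x_n)$ together with $e_l(\xx_n)\in I_n$ gives, by induction on $l$, that $e_l(x_2,\dots,x_n)\equiv (-1)^l x_1^{\,l}\pmod{I_n}$; hence $f_{\{1\}}\equiv \sum_{l=0}^{n-1}x_1^{\,n} = n\,x_1^{\,n}\pmod{I_n}$. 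Finally $x_1^{\,n}\in I_n$: this is \eqref{classical-coinvariant-ideal-membership} with $S=\{1\}$, or directly, $\prod_{i=1}^{n}(x_1-x_i)=0$ forces $x_1^{\,n} = -\sum_{l=1}^{n}(-1)^l e_l(\xx_n)x_1^{\,n-l}\in I_n$. Thus $f_{\{1\}}\in I_n$, so $f_J\in I_n$.

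For the interesting implication ($1\notin J \Rightarrow f_J\notin I_n$), the hypothesis gives $J\subseteq\{2,\dots,n\}$, and the factorization yields $f_J\cdot f_{\{2,\dots,n\}\setminus J} = f_{\{2,\dots,n\}}$; since $I_n$ is an ideal it therefore suffices to prove the single statement $f_{\{2,\dots,n\}}\notin I_n$. Now $f_{\{2,\dots,n\}} = (x_2x_3\cdots x_n)\prod_{2\le i<j\le n}(x_i-x_j)$ is a polynomial in $x_2,\dots,x_n$ alone, homogeneous of degree $\binom{n}{2}$, and the set of monomials occurring in it coincides with the set of monomials of the Vandermonde $\delta_n$ not divisible by $x_1$. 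I would then use the inner product $\langle -,-\rangle$ on $\CC[\xx_n]$ from Section~\ref{Background}: since $I_n\perp H_n$ and $\delta_n\in H_n = I_n^{\perp}$, it is enough to show $\langle f_{\{2,\dots,n\}},\delta_n\rangle\ne 0$. By orthogonality of monomials, $\langle f_{\{2,\dots,n\}},\delta_n\rangle$ is a signed sum over those common monomials, each of which is a rearrangement of the staircase $(0,1,\dots,n-1)$ with a $0$ in the first coordinate. A short sign computation — the permutation of $[n]$ sorting such an exponent and the permutation of $\{2,\dots,n\}$ producing it differ by a fixed $n$-cycle, hence by the fixed sign $(-1)^{n-1}$ — shows that every term of this sum is the same nonzero number, so $\langle f_{\{2,\dots,n\}},\delta_n\rangle = \pm(n-1)!\prod_{k=1}^{n-1}k!\ne 0$. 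Equivalently, the $\symm_n$-antisymmetrization of $f_{\{2,\dots,n\}}$ is a nonzero scalar multiple of $\delta_n$, so $f_{\{2,\dots,n\}}$ cannot lie in the $\symm_n$-stable ideal $I_n$; hence $f_J\notin I_n$.

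The routine parts are the easy implication and the reduction $f_J\cdot f_{\{2,\dots,n\}\setminus J}=f_{\{2,\dots,n\}}$. The only substantive point is the verification that $f_{\{2,\dots,n\}}\notin I_n$, where the work is entirely in the sign bookkeeping: one must check that the common monomials of $f_{\{2,\dots,n\}}$ and $\delta_n$ contribute to $\langle f_{\{2,\dots,n\}},\delta_n\rangle$ with a consistent sign, so that they reinforce rather than cancel.
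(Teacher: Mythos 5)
Your proof is correct, but the key step is argued along a genuinely different line from the paper's. The easy direction ($1\in J$) is the same as the paper's up to packaging: both reduce to $f_{\{1\}}\equiv n\,x_1^n\equiv 0 \bmod I_n$, the paper via the generating-function congruence $(1-tx_2)\cdots(1-tx_n)\equiv (1-tx_1)^{-1}$ and you via the equivalent recursion $e_l(x_2,\dots,x_n)\equiv -x_1e_{l-1}(x_2,\dots,x_n)$. For the converse, both arguments ultimately exploit $\ann_{\CC[\xx_n]}(\delta_n)=I_n$ (equivalently $\delta_n\in I_n^{\perp}$), but the paper handles an arbitrary $J$ with $1\notin J$ in one stroke: it shows $f_J\odot\delta_n\neq 0$ by a leading-monomial argument in lexicographic order with the reversed variable order $x_n>\cdots>x_1$, using that the leading exponent of $f_J$ is $b_i=i-\stair(J)_i<i$, which divides the leading exponent $(0,1,\dots,n-1)$ of $\delta_n$. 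You instead first reduce to the extreme case via the factorization $f_J\cdot f_{\{2,\dots,n\}\setminus J}=f_{\{2,\dots,n\}}$ and the fact that $I_n$ is an ideal, and then compute the pairing $\langle f_{\{2,\dots,n\}},\delta_n\rangle$ exactly, observing that the monomials of $f_{\{2,\dots,n\}}$ are precisely the monomials of $\delta_n$ with $x_1$-exponent $0$, each with $\langle m,m\rangle=\prod_{k=1}^{n-1}k!$, and that the signs reinforce. That sign consistency does hold: the ratio of the coefficient of a common monomial in $f_{\{2,\dots,n\}}$ to its coefficient in $\delta_n$ is the constant $(-1)^{\binom{n-1}{2}}$ (your ``$n$-cycle, hence $(-1)^{n-1}$'' bookkeeping is not quite the right constant, but since the sign is fixed across all monomials and you only claim the value up to $\pm$, nothing is affected). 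The trade-off: the paper's term-order argument is shorter and uniform in $J$, and its leading-monomial information is reused elsewhere; your route buys an explicit nonzero evaluation $\langle f_{\{2,\dots,n\}},\delta_n\rangle=\pm(n-1)!\prod_{k=1}^{n-1}k!$ and the pleasant reformulation that the antisymmetrization of $f_{\{2,\dots,n\}}$ is a nonzero multiple of $\delta_n$, at the cost of the divisibility reduction and more careful sign bookkeeping.
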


\begin{proof}
Suppose $1 \in J$, so that $f_{\{1\}} \mid f_J$.  We claim  $f_{\{1\}} = x_1 (x_1 - x_2) ( x_1 - x_3) \cdots (x_1 - x_n) \in I_n$.
Indeed, if $t$ is a new variable, then modulo $I_n$ we have
\begin{equation}
1 \equiv \frac{1}{(1 - t x_1) (1 - t x_2 ) \cdots (1 - t x_n)} \mod I_n
\end{equation}
so that 
\begin{equation}
(1 - t x_2) \cdots (1 - t x_n) \equiv \frac{1}{1 -t  x_1} \mod I_n
\end{equation}
and taking the coefficient of $t^d$ yields
\begin{equation}
(-1)^d e_d(x_2, \dots, x_n) \equiv x_1^d \mod I_n.
\end{equation}
We conclude that
\begin{equation}
f_{\{1\}} = \sum_{d = 0}^{n-1} (-1)^d e_d(x_2, \dots, x_n) \cdot x_1^{n-d} \equiv n \cdot x_1^n \equiv 0 \mod I_n
\end{equation}
where we used the fact that $x_1^n \in I_n$.

Now suppose $1 \notin J$.  Recall that $\ann_{\CC[\xx_n]}(\delta_n) = I_n$ under the $\odot$-action of $\CC[\xx_n]$ on itself.
Therefore, to show that $f_J \notin I_n$, it is enough to show that $f_J \odot \delta_n \neq 0$. 
Since $f_J = \prod_{j \in J} f_{\{j\}}$, it suffices to show that $f_J \odot \delta_n \neq 0$ when 
$J = J_0 := \{2, 3, \dots, n \}$ is the maximal subset of $[n]$ not containing $1$.
By definition, we have
\begin{equation}
f_{J_0} = (x_2 x_3 \cdots x_n) \times \prod_{2 \leq r <  s \leq n} (x_r - x_s)
\end{equation}
so that the terms of $f_{J_0}$ are (up to a global sign) the terms of
$\delta_n$ in which $x_1$ does not appear. If we use $\doteq$ to denote equality up to a nonzero scalar, we therefore have
\begin{equation}
f_{J_0} \odot \delta_n \doteq f_{J_0} \odot f_{J_0} > 0
\end{equation}
where we used the fact that both $f_{J_0}$ and $\delta_n$ are homogeneous of degree ${n \choose 2}$ and the fact that
$f \odot f > 0$ for any homogeneous nonzero polynomial $f$.
\end{proof}

The determinant in Lemma~\ref{big-determinant} may be evaluated to give the desired triangularity 
relation for the polynomials $\FF_{J,K}$.
Lemma~\ref{big-determinant} will also imply that  $\FF_{J,J} = \pm f_J$.

\begin{lemma}
\label{f-product}
We have $\FF_{J,K} = 0$ unless $J \geq_{\Gale} K$ in Gale order. Furthermore, we have
\begin{equation}
\FF_{J,J} = \pm f_J.
\end{equation}
\end{lemma}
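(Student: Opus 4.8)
The plan is to evaluate the $n\times n$ determinant $\det(A_{J,K})$ from Lemma~\ref{big-determinant} by exploiting the lower-triangular structure of the $h$-block $C_{J,K}$. Recall that row $i$ of $C_{J,K}$ records the entries $h_{b(J)_i - j}(x_{b(J)_i}, x_{b(J)_i+1}, \dots, x_n)$ as $j$ ranges over $1,\dots,n$, so the $j$-th column is ``active'' in that row precisely when $j \le b(J)_i$; the leading (rightmost nonzero) entry in row $i$ sits in column $j = b(J)_i$ and equals $h_0 = 1$. First I would perform column operations on $A_{J,K}$ using the identity $h_r(x_m,\dots,x_n) = h_r(x_{m+1},\dots,x_n) + x_m h_{r-1}(x_m,\dots,x_n)$ (the same recursion already invoked for \eqref{classical-coinvariant-ideal-membership}) to clear the $C_{J,K}$-block down to a permutation-type pattern: each row of $C_{J,K}$ can be reduced to a single $1$ in column $b(J)_i$, at the cost of replacing the $B_{J,K}$-block by a modified Vandermonde-type block $\widetilde B_{J,K}$ whose $(K,a)$-entry becomes a polynomial in the $x_k$ ($k\in K$) obtained by the corresponding substitutions. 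Expanding the resulting determinant along the $C_{J,K}$ rows then shows $\det(A_{J,K}) = \pm \det\big(\widetilde B_{J,K}\big|_{\text{columns } J}\big)$, i.e. the minor of the reduced top block on the column set indexed by $J$.

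The vanishing statement then follows from a support/degree count. Since the $h$-block forces us to take columns $J$ in the surviving top-block minor, and since the $(k,a)$-entry of $\widetilde B_{J,K}$ has $x_k$-degree equal to $a$ minus a correction coming from how many $h$-recursion steps were applied (which is governed by $b(J)$, hence by $J$), the minor is a polynomial each of whose monomials $\prod_{k\in K} x_k^{d_k}$ must satisfy a bound of the form $d_{k_i} \le j_i$ after sorting — this is exactly the Gale inequality $K \le_{\Gale} J$. More precisely, I would argue that if $K \not\le_{\Gale} J$ then in every term of the Leibniz expansion of the reduced top-block minor some variable $x_{k_i}$ is forced to appear with a negative exponent (i.e. the relevant $h$-coefficient is zero), so the whole determinant vanishes; alternatively one can phrase this through the Jacobi–Trudi / flagged skew Schur interpretation mentioned in the Remark, where $\Delta_{[n]-J,([n]-I)^*}(\HHH)$ is nonzero only for $I^* \le_{\Gale} J$, combined with the Vandermonde minor $|x_k^i|_{k\in K,i\in I}$ being nonzero only for $K$ compatible with $I$, and then chaining the two comparisons.

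For the diagonal case $K = J$, the reduction above leaves the top-block minor on column set $J$ with the variables $x_j$, $j\in J$; I would show by direct inspection that after the column clearing the $(j_a, j_b)$-entry is $x_{j_a}^{\,\stair(J)_{j_a}}$-homogeneous and that the determinant factors as $\prod_{j\in J} x_j \prod_{i>j}(x_j - x_i)$, matching the definition of $f_J$. A clean way to see the factorization: the reduced top block, restricted to columns $J$, is (up to reversal and the substitutions) a generalized Vandermonde matrix in the variables $\{x_j : j\in J\}$ whose $a$-th column is $x_j^{\,e_a}$ for exponents $e_a$ that, because of the $J$-staircase bookkeeping, are exactly $0,1,\dots,r-1$ shifted appropriately, plus the overall factor $\prod_{j\in J} x_j$ coming from the lowest column; the Vandermonde determinant formula then yields $\pm\prod_{j<j' \text{ in } J}(x_j - x_{j'})$, and one checks this equals $\pm\prod_{j\in J} x_j\prod_{i>j}(x_j-x_i)$ by noting that the ``missing'' factors $(x_j - x_i)$ with $i\notin J$ are supplied by the column-clearing step. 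Consistency with the $n=3$ and $n=5$ worked examples in the text provides a useful sanity check.

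The main obstacle I anticipate is bookkeeping the signs and the exact exponent shifts through the column-reduction: tracking which $h$-recursion is applied in which row, how the exponents in $\widetilde B_{J,K}$ transform, and why the surviving column set is forced to be $J$ (rather than some other $r$-subset) requires care. Once the reduction to a single generalized-Vandermonde minor is set up correctly, both the Gale-triangularity and the identification $\FF_{J,J} = \pm f_J$ should fall out; so I would budget most of the effort into making the column-operation step precise, possibly by induction on $|[n]-J|$ mirroring the inductive structure already used in Lemma~\ref{script-ideal-containment}.
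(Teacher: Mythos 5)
Your skeleton — evaluating $\det(A_{J,K})$ from Lemma~\ref{big-determinant} by column operations built on the recursion $h_d(S) = x_c h_{d-1}(S) + h_d(S - c)$ — is the same route the paper takes, but the two places where the lemma actually gets proved are argued with reasoning that would fail, because you never pin down what the reduced top block looks like. The paper's reduction shows that, after factoring $\prod_{k \in K} x_k$ out of $B_{J,K}$, the determinant collapses to an $r \times r$ determinant whose $(p,q)$-entry is the full product $\prod_{i > j_q}(x_{k_p} - x_i)$ over all $i \in \{j_q+1, \dots, n\}$ (Equation~\eqref{p-small-determinant}). Both conclusions are read off from this explicit form: the $(p,q)$-entry is literally zero whenever $k_p > j_q$ (the factor with $i = k_p$ appears), so a nonvanishing Leibniz term needs $k_p \leq j_{\sigma(p)}$ for some permutation $\sigma$, which forces $K \leq_\Gale J$; and at $K = J$ the reduced matrix is triangular with diagonal entries $\prod_{i > j}(x_j - x_i)$, whose product together with the extracted $\prod_{j \in J} x_j$ is exactly $f_J$. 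Your substitute arguments do not work. The vanishing is entry-wise, not a matter of exponents: no variable is ``forced to a negative exponent,'' and a degree count on the reduced minor does not encode the Gale condition. Worse, the proposed alternative of chaining the nonvanishing of $\Delta_{[n]-J,([n]-I)^*}(\HHH)$ with a nonvanishing condition on $\left| x_k^i \right|_{k \in K, i \in I}$ is false: that minor is a nonzero polynomial for \emph{every} pair $K, I$ of equal size, so for $K \not\leq_\Gale J$ the vanishing of $\FF_{J,K}$ comes from cancellation across different $I$'s, not termwise. Concretely, in the paper's $n = 3$ example, $\FF_{13,23} = -(x_2 + x_3)(x_2 x_3^2 - x_2^2 x_3) + (x_2 x_3^3 - x_2^3 x_3) = 0$ is a cancellation of two nonzero summands.

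The diagonal case is also off as written: a generalized Vandermonde determinant in the variables $\{x_j : j \in J\}$ alone can only ever produce factors $(x_j - x_{j'})$ with \emph{both} indices in $J$, whereas $f_J = \prod_{j \in J} x_j \prod_{i > j}(x_j - x_i)$ contains factors $(x_j - x_i)$ with $i \notin J$. Saying these missing factors are ``supplied by the column-clearing step'' is precisely the computation your write-up defers; once you carry the clearing out you find the entries are themselves the products $\prod_{i > j_q}(x_{k_p} - x_i)$, the matrix at $K = J$ is triangular, and its determinant is a product of diagonal entries — there is no Vandermonde evaluation to invoke. So the missing idea is the explicit reduced formula for the entries; with it, both the Gale triangularity and $\FF_{J,J} = \pm f_J$ are immediate, and without it neither of your closing arguments is valid.
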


\begin{proof}
We factor $\prod_{k \in K} x_k$ out of 
the upper block $B_{J,K}$ of the determinant $\det(A_{J,K}) = \pm \FF_{J,K}$ in Lemma~\ref{big-determinant}.
Next, we apply column operations to eliminate the $h_d(S)$'s in the bottom portion $C_{J,K}$ of this determinant.

Specifically, we focus on each pivot 1 in $C_{J,K}$ from bottom to top.
Working to the left from a given pivot 1, in row $i$ of $C_{J,K}$, we subtract  $x_c$ times column $j$ of $A_{J,K}$
from column $j-1$, where $x_c$ is a variable belonging to 
$\{x_{b(J)_i}, \dots, x_n \} - \{ x_{b(J)_{i+1}}, \dots, x_n \}$.  Since
$h_d(S) = x_c h_{d-1}(S) + h_d(S - c)$ whenever $c \in S$, this eliminates the $h_d(S)$'s from the bottom portion $C_{J,K}$
of our determinant.
After performing these operations, the determinant $\det(A_{J,K})$ is reduced to a single maximal minor of its (new)
upper portion $B_{J,K}$, from which the result follows.

To see how this works in our  example $J = \{1,3\}$ and $K = \{a, b\}$, we 
factor out $x_a x_b$ from the top two rows of our determinant to get 
\begin{scriptsize}
\begin{equation*}
\begin{vmatrix}
x_a^5 & x_a^4 & x_a^3 & x_a^2 & x_a^1 \\
x_b^5 & x_b^4 & x_b^3 & x_b^2 & x_b^1 \\
h_1(2345) & 1 & 0 & 0 & 0 \\
h_3(45) & h_2(45) & h_1(45) & 1 & 0 \\
h_4(5) & h_3(5)  & h_2(5)   & h_1(5) & 1
\end{vmatrix} =
x_a x_b
\begin{vmatrix}
x_a^4 & x_a^3 & x_a^2 & x_a^1 & 1 \\
x_b^4 & x_b^3 & x_b^2 & x_b^1 & 1 \\
h_1(2345) & 1 & 0 & 0 & 0 \\
h_3(45) & h_2(45) & h_1(45) & 1 & 0 \\
h_4(5) & h_3(5)  & h_2(5)   & h_1(5) & 1
\end{vmatrix}.
\end{equation*}
\end{scriptsize}
Our focus shifts to the bottom three rows.
Since the bottom pivot 1 is in column 5, we subtract $x_5$ times each column from the previous column, resulting in
\begin{scriptsize}
\begin{equation*}
x_a x_b
\begin{vmatrix}
x_a^4 & x_a^3 & x_a^2 & x_a^1 & 1 \\
x_b^4 & x_b^3 & x_b^2 & x_b^1 & 1 \\
h_1(2345) & 1 & 0 & 0 & 0 \\
h_3(45) & h_2(45) & h_1(45) & 1 & 0 \\
h_4(5) & h_3(5)  & h_2(5)   & h_1(5) & 1
\end{vmatrix}
=
x_a x_b
\begin{vmatrix}
x_a^4 - x_a^3 x_5 & x_a^3 - x_a^2 x_5 & x_a^2 - x_a x_5 & x_a^1 - x_5 & 1 \\
x_b^4 - x_b^3 x_5 & x_b^3 - x_b^2 x_5 & x_b^2 - x_b x_5 & x_b^1 - x_5 & 1 \\
h_1(234) & 1 & 0 & 0 & 0 \\
h_3(4) & h_2(4) & h_1(4) & 1 & 0 \\
0 & 0  & 0  & 0 & 1
\end{vmatrix}.
\end{equation*}
\end{scriptsize}
This has the effect of eliminating the argument $x_5$ from the $h$'s.
To eliminate the $x_4$'s from the arguments of the $h$'s,
we focus on the pivot  1 in row 4, column 4.
For each column before column 2, we subtract $x_4$ times the subsequent column.
The result is
\begin{scriptsize}
\begin{equation*}
x_a x_b
\begin{vmatrix}
x_a^4 - x_a^3 x_5 - x_a^3 x_4 + x_a^2 x_4 x_5
& x_a^3 - x_a^2 x_5 - x_a^2 x_4 + x_a x_4 x_5
& x_a^2 - x_a x_5 - x_a x_4 + x_4 x_5
& x_a^1 - x_5  & 1 \\
x_b^4 - x_b^3 x_5 - x_b^3 x_4 + x_b^2 x_4 x_5
& x_b^3 - x_b^2 x_5  - x_b^2 x_4 + x_b x_4 x_5
& x_b^2 - x_b x_5 - x_b x_4 + x_4 x_5
 & x_b^1 - x_5  & 1 \\
h_1(23) & 1 & 0 & 0 & 0 \\
0 & 0 & 0 & 1 & 0 \\
0 & 0  & 0  & 0 & 1
\end{vmatrix}.
\end{equation*}
\end{scriptsize}
The entries of this matrix are better written using elementary symmetric polynomials, viz.
\begin{scriptsize}
\begin{equation*}
x_a x_b
\begin{vmatrix}
x_a^4 - x_a^3 e_1(45) + x_a^2 e_2(45)
& x_a^3 - x_a^2 e_1(45) + x_a e_2(45)
& x_a^2 - x_a e_1(45) + e_2(45)
& x_a - e_1(5) & 1 \\
x_b^4 - x_b^3 e_1(45) + x_b^2 e_2(45)
& x_b^3 - x_b^2 e_1(45) + x_b e_2(45)
& x_b^2 - x_b e_1(45) + e_2(45)
& x_b - e_1(5) & 1 \\
h_1(23) & 1 & 0 & 0 & 0 \\
0 & 0 & 0 & 1 & 0 \\
0 & 0  & 0  & 0 & 1
\end{vmatrix}.
\end{equation*}
\end{scriptsize}
Continuing to pivot 1 in row 3, column 2,
we multiply the second column by $- x_2 - x_3$ and add it to the  first column. 
The result is 
\begin{scriptsize}
\begin{equation*}
x_a x_b
\begin{vmatrix}
x_a^4 - x_a^3 e_1(2345) + x_a^2 e_2(2345) - x_a e_3(2345) + e_4(2345)
& x_a^3 - x_a^2 e_1(45) + x_a e_2(45)
& x_a^2 - x_a e_1(45) + e_2(45)
& x_a - e_1(5) & 1 \\
x_b^4 - x_b^3 e_1(2345) + x_b^2 e_2(2345) - x_b e_3(2345) + e_4(2345)
& x_b^3 - x_b^2 e_1(45) + x_b e_2(45)
& x_b^2 - x_b e_1(45) + e_2(45)
& x_b - e_1(5) & 1 \\
0 & 1 & 0 & 0 & 0 \\
0 & 0 & 0 & 1 & 0 \\
0 & 0  & 0  & 0 & 1
\end{vmatrix}.
\end{equation*}
\end{scriptsize}
which may be expressed as the smaller $2 \times 2$ determinant
\begin{scriptsize}
\begin{equation*}
x_a x_b
\begin{vmatrix}
x_a^4 - x_a^3 e_1(2345) + x_a^2 e_2(2345) - x_a e_3(2345) + e_4(2345)
& x_a - e_1(5)  \\
x_b^4 - x_b^3 e_1(2345) + x_b^2 e_2(2345) - x_b e_3(2345) + e_4(2345)
& x_b - e_1(5)
\end{vmatrix}.
\end{equation*}
\end{scriptsize}
The entries in this smaller determinant factor as
\begin{scriptsize}
\begin{equation*}
x_a x_b
\begin{vmatrix}
(x_a - x_2)(x_a - x_3)(x_a - x_4)(x_a - x_5)
& (x_a - x_5)  \\
(x_b - x_2)(x_b - x_3)(x_b - x_4)(x_b - x_5)
& (x_b - x_5)  \\
\end{vmatrix}.
\end{equation*}
\end{scriptsize}

For general $J = \{j_1 < \cdots < j_r \}$ and $K = \{k_1 < \cdots < k_r \}$, 
this procedure yields the formula
\begin{equation}
\label{p-small-determinant}
\FF_{J,K} = \pm \prod_{k \in K} x_k \cdot  \begin{vmatrix} \prod_{i > j_q} (x_{k_p} - x_i)   \end{vmatrix}_{1 \leq p, q \leq r}
\end{equation}
expressing $\FF_{J,K}$ as an $r \times r$ determinant times the variables indexed by $K$.
If $k_p > j_q$, the $(p,q)$-entry of the determinant in Equation~\eqref{p-small-determinant} vanishes.
If $J \not\leq_\Gale K$, this determinant has the block form
$\begin{vmatrix} * & * \\ {\bm 0} & * \end{vmatrix}$
where the southwest block of zeros intersects the main diagonal, so that $\FF_{J,K} = 0$.  
If $J = K$, the determinant in Equation~\eqref{p-small-determinant} is upper triangular, and the product of diagonal 
entries is as described in the statement of the lemma.
\end{proof}

\subsection{The colon ideal $(I_n : f_J)$ in $\CC[\xx_n]$}  Thanks to Lemma~\ref{f-product}, the differential operators
$\DD_J$ exhibit useful triangularity with respect to the Gale order on fermionic monomials.
In order to consider their fermionic leading term $\theta_J$, we will   study
the colon ideals
\begin{equation}
(I_n : f_J) := \{ g \in \CC[\xx_n] \,:\, g \cdot f_J \in I_n \} \subseteq \CC[\xx_n]
\end{equation}
where $I_n \subseteq \CC[\xx_n]$ is the classical coinvariant ideal.

It will turn out (Theorem~\ref{colon-ideal-identification}) that the ideal $(I_n : f_J)$ has two other equivalent definitions.
As a first step to proving this, we introduce the following bigraded subspace of $\Omega_n$.

\begin{defn}
\label{shprime-defn}
Let $SH'_n$ be the smallest  linear subspace of $\Omega_n$ which
\begin{itemize}
\item contains the superspace Vandermonde $\delta_n$, 
\item is closed under all bosonic partial derivatives $\partial_1, \dots, \partial_n$, and
\item is closed under the action of the higher Euler operators $d_i$ for $i \geq 1$.
\end{itemize}
\end{defn}

Swanson and Wallach showed \cite{SW2} that $SH'_n$ is annihilated by the supercoinvariant ideal $SI_n \subseteq \Omega_n$
under the $\odot$-action, so that $SH'_n \subseteq SH_n$ is a subset of the superharmonic space.
We will show (Theorem~\ref{superharmonic-space-characterization})
that in fact $SH'_n = SH_n$.  
For now, we can use $SH'_n$ and our triangularity results (Lemmas~\ref{q-polynomial-observations} and 
\ref{f-product})
to show that the polynomials $p_{J,1}, \dots, p_{J,n}$ from Section~\ref{Upper} lie in $(I_n : f_J)$.

\begin{lemma}
\label{p-polynomial-containment}
Let $J \subseteq [n]$.  For any $1 \leq i \leq n$ we have $p_{J,i} \in (I_n : f_J)$.
\end{lemma}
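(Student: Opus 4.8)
The plan is to show $p_{J,i} \cdot f_J \in I_n$ for each $i$. The key observation linking the $p_{J,i}$ to the polynomials $f_J$ is the triangularity already established: the differential operator $\DD_J$ (Definition~\ref{d-operator-definition}) has fermionic leading term $\theta_J$ with coefficient $\FF_{J,J} \odot (-) = \pm f_J \odot (-)$ by Lemma~\ref{f-product}, while the terms $\theta_K$ for $K <_\Gale J$ contribute strictly smaller fermionic monomials and the coefficients $\FF_{J,K}$ vanish unless $K \leq_\Gale J$. Meanwhile, the superspace elements $q_{J,i} \in SI_n$ from Lemma~\ref{q-polynomial-observations} have the dual triangularity: $q_{J,i} \equiv p_{J,i} \cdot \theta_J$ modulo $\bigoplus_{J <_\Gale K} \CC[\xx_n] \cdot \theta_K$, the Gale-larger terms. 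So I would compute the pairing between these two triangular families and extract the $\theta_{[n]}$-coefficient, where the two leading terms multiply to produce $p_{J,i} \cdot f_J$.

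More precisely, first I would recall the Swanson--Wallach fact that $SH'_n$ (Definition~\ref{shprime-defn}) is annihilated by $SI_n$ under $\odot$, so in particular $q_{J,i} \odot \eta = 0$ for every $\eta \in SH'_n$. Second, I would apply this with $\eta = \DD_J(\delta_n)$: since $SH'_n$ is closed under bosonic derivatives and higher Euler operators $d_i$, and $\delta_n \in SH'_n$, and $\DD_J$ is built out of exactly these operations (its definition applies $d_I$ then $\odot$ by a polynomial $\Delta_{[n]-J,([n]-I)^*}(\HHH)$, i.e. a bosonic derivative), we get $\DD_J(\delta_n) \in SH'_n$. Third, I would expand $q_{J,i} \odot \DD_J(\delta_n) = 0$ using both triangularity statements. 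Write $q_{J,i} = p_{J,i}\theta_J + (\text{Gale-larger }\theta_K\text{ terms})$ and $\DD_J(\delta_n) = \sum_{K \leq_\Gale J}(\FF_{J,K}\odot \delta_n)\theta_K$. Taking the $\odot$-product and isolating the top fermionic component: a term $\theta_J$ from $q_{J,i}$ pairs against $\theta_J$ from $\DD_J(\delta_n)$ (the fermionic derivatives $\partial^\theta$ of $\theta_J$ applied to $\theta_J$ give a nonzero scalar), while $\theta_K$ with $K >_\Gale J$ from $q_{J,i}$ cannot pair against any $\theta_{K'}$ with $K' \leq_\Gale J$ to survive — the fermionic part forces $K = K'$, impossible. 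Hence $0 = q_{J,i} \odot \DD_J(\delta_n) = \pm\, p_{J,i} \odot (f_J \odot \delta_n) = \pm\,(p_{J,i} f_J) \odot \delta_n$ in $\CC[\xx_n]$ (using that $\odot$ is an action, so iterated $\odot$ by $p_{J,i}$ then $f_J$ equals $\odot$ by the product $p_{J,i} f_J$). Since $\ann_{\CC[\xx_n]}(\delta_n) = I_n$, this gives $p_{J,i} f_J \in I_n$, i.e. $p_{J,i} \in (I_n : f_J)$.

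The step I expect to be the main obstacle is the bookkeeping in the third step: verifying cleanly that in the $\odot$-product $q_{J,i} \odot \DD_J(\delta_n)$ only the matched pair $(\theta_J, \theta_J)$ survives, and that this surviving contribution is exactly $\pm (p_{J,i} f_J)\odot \delta_n$ rather than some linear combination involving the subleading $\FF_{J,K}$ with $K <_\Gale J$. The point is that although $\DD_J(\delta_n)$ has many fermionic terms $\theta_K$ with $K <_\Gale J$, the contraction $\partial^\theta$ coming from $q_{J,i}$'s fermionic monomials $\theta_{K'}$ with $K' \geq_\Gale J$ can only be nonzero when $K' \subseteq K$ as sets, and equality of sizes then forces $K' = K$, contradicting $K' \geq_\Gale J >_\Gale K$ unless $K' = K = J$. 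I would isolate this as a small combinatorial lemma about when $\partial\theta_{K'}$ applied to $\theta_K$ is nonzero — namely $K' \subseteq K$ — and note it immediately kills all cross terms. One should also double-check the sign and nonvanishing of the scalar $\partial\theta_J(\theta_J)$, but that is routine: it is $\pm 1$. With that in hand the identity $p_{J,i} f_J \in I_n$ drops out.
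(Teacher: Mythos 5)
Your proposal is correct and follows essentially the same route as the paper: apply the Swanson--Wallach fact that $SI_n$ annihilates $SH'_n \subseteq SH_n$ to get $q_{J,i} \odot \DD_J(\delta_n) = 0$, then use the two triangularity statements (Lemma~\ref{q-polynomial-observations}(3) and Lemma~\ref{f-product}) to see that only the $(\theta_J,\theta_J)$ pairing survives, yielding $(p_{J,i} f_J) \odot \delta_n = 0$ and hence $p_{J,i} \in (I_n : f_J)$ since $\ann_{\CC[\xx_n]}(\delta_n) = I_n$. Your extra bookkeeping on why the cross terms die (contractions force $K' \subseteq K$, equal sizes force $K'=K$, antisymmetry of Gale order forces $K'=K=J$) is exactly the detail the paper leaves implicit in the phrase ``the triangularity relations force.''
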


\begin{proof}
Let $q_{J,i} \in SI_n$ be the supercoinvariant ideal element associated to $p_{J,i}$. By Lemma~\ref{q-polynomial-observations} (3) 
we have
\begin{equation}
\label{q-triangularity}
q_{J,i} = p_{J,i} \cdot \theta_J + \sum_{J <_\Gale L} A_L \cdot \theta_L
\end{equation}
for some polynomials $A_L \in \CC[\xx_n]$.
On the other hand, Lemma~\ref{f-product} implies that 
\begin{equation}
\label{d-triangularity}
\DD_J(\delta_n) \doteq (f_J \odot \delta_n) \cdot \theta_J + \sum_{K <_\Gale J} B_K \cdot \theta_K
\end{equation}
for some $B_K \in \CC[\xx_n]$, where $\doteq$ denotes equality up to a nonzero scalar.
Since $\DD_J$ is a linear combination of $d_I$ operators with coefficients in $\partial_1, \dots, \partial_n$, we have
\begin{equation}
\DD_J(\delta_n) \in SH'_n \subseteq SH_n
\end{equation}
where the $\subseteq$ is justified by the work of Swanson and Wallach \cite{SW2}.
Since $SI_n$ annihilates $SH_n$ under the $\odot$-action and $q_{J,i} \in SI_n$, we have
\begin{equation}
\label{first-zero-superspace}
q_{J,i} \odot \DD_J(\delta_n) = 0.
\end{equation}
The triangularity relations \eqref{q-triangularity} and \eqref{d-triangularity} force
\begin{equation}
 (p_{J,i} \cdot f_J) \odot \delta_n = p_{J,i} \odot (f_J \odot \delta_n) = 0.
\end{equation}
Since $\ann_{\CC[\xx_n]}(\delta_n) = I_n$, this implies that $p_{J,i} \cdot f_J \in I_n$, or equivalently $p_{J,i} \in (I_n : f_J)$.
\end{proof}

The colon ideals $(I_n : f_J)$ are connected to a  class of permutations in $\symm_n$.
If $1 \leq j \leq n$, a permutation $w \in \symm_n$ is called {\em $j$-resentful} if
$w(j) = n$, or
the value $w(j)+1$ appears among $w(j+1), w(j+2), \dots, w(n)$.\footnote{We think of the one-line notation
$w = [w(1), \dots, w(n)]$ as recording the scores of $n$ musicians performing in a competition; after their performance, they sit down
and join the audience.
If the $j^{th}$ contestant  scores best  (i.e. $w(j) = n$)
or is beaten by 1 by an later contestant, this creates feelings of resentment (on behalf of the other contestants or the 
$j^{th}$ constant, respectively).}
The permutation $w$ is {\em $j$-Nietzschean} if it is not $j$-resentful.\footnote{The creator of The Superman 
should have some avatar in superspace.}

If $J \subseteq [n]$ is a subset, a permutation $w \in \symm_n$ is {\em $J$-Nietzschean} if it is $j$-Nietzschean for all $j \in J$.
We write
\begin{equation}
\NNNN_J := \{ w \in \symm_n \,:\, w \text{ is $J$-Nietzschean} \}
\end{equation}
for the set of all $J$-Nietszschean permutations in $\symm_n$.  
Nietzschean permutations are counted by a simple product formula.

\begin{proposition}
\label{superman-count}
Let $J \subseteq [n]$. The number of $J$-Nietzschean permutations in $\symm_n$ is given by
\begin{equation}
|\NNNN_J| = \prod_{i = 1}^n \stair(J)_i
\end{equation}
where $\stair(J) = (\stair(J)_1, \dots, \stair(J)_n)$ is the $J$-staircase.
\end{proposition}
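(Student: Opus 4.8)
The plan is to pass to inverse permutations---where the $J$-Nietzschean condition becomes a purely local statement---and then count by a greedy insertion bijection.

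First I would reformulate the condition. Fix $w \in \symm_n$ and $j \in [n]$, and set $p := w(j)$. Unwinding the definition of $j$-resentful, $w$ is $j$-Nietzschean if and only if $p \neq n$ and the value $p+1$ occurs in the one-line word of $w$ at a position strictly smaller than $j$. Writing $u := w^{-1}$, so $u(p) = j$, this says exactly that in the one-line word $[u(1), \dots, u(n)]$ the entry $j$ sits in position $p < n$ with $u(p+1) < u(p) = j$; in words, \emph{$j$ is not last in $u$ and is immediately followed by a smaller entry}. Since $w \mapsto w^{-1}$ is a bijection of $\symm_n$, it follows that $|\NNNN_J|$ equals the number of $u \in \symm_n$ such that every $j \in J$ is immediately followed in the one-line word of $u$ by a smaller entry; call such $u$ \emph{$J$-good}.

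Next I would count $J$-good permutations with the classical insertion bijection: realize $u \in \symm_n$ by inserting the values $1, 2, \dots, n$ in turn, where at step $v$ one places $v$ into one of the $v$ gaps of the current word on $\{1, \dots, v-1\}$. The point is that $u$ is $J$-good precisely when the insertion sequence avoids all \emph{forbidden} gaps, namely: for each already-inserted $j \in J$, the gap immediately to the right of $j$; and, when the value $v$ being inserted lies in $J$, also the gap at the very end of the current word. If these are avoided, then each $j \in J$ acquires at the moment of its insertion a (necessarily smaller) right neighbor that is never displaced; conversely, the first forbidden move permanently leaves some $j \in J$ either last or immediately followed by a larger entry, since all values inserted afterwards are larger.

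Finally I would count the legal choices step by step. At step $v$ there are $v$ gaps; the forbidden ``right-of-$j$'' gaps are indexed by $J \cap [v-1]$ and are pairwise distinct, and the extra forbidden end-gap (present iff $v \in J$) is distinct from those because along any legally built partial word no element of $J$ is ever in the last position---an element of $J$ is never placed last and, nothing being removed, never becomes last. Hence the number of legal choices at step $v$ is $v - |J \cap [v-1]|$, diminished by a further $1$ when $v \in J$, i.e.\ exactly $v - |J \cap [v]| = \stair(J)_v$. Multiplying over $v = 1, \dots, n$ gives $|\NNNN_J| = \prod_{v=1}^{n} \stair(J)_v = \prod_{i=1}^{n} \stair(J)_i$, and in particular the count vanishes when $1 \in J$, as $\stair(J)_1 = 0$. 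The step I expect to need the most care is the converse (``only if'') half of the insertion criterion, hand in hand with the distinctness of the forbidden gaps---both rely on the small bookkeeping fact that an element of $J$ never occupies the final slot of a legally built partial word, which must be in place before the count $\stair(J)_v$ is valid.
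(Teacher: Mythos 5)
Your argument is correct, and it is essentially the paper's own proof seen through the bijection $w \mapsto w^{-1}$: the paper builds $w$ by appending a letter and standardizing at each stage, which under inversion is exactly your insertion of the values $1,\dots,n$ into gaps, and your forbidden gaps (the gap immediately right of each already-inserted $j \in J$, plus the end gap when the value being inserted lies in $J$) correspond precisely to the paper's forbidden appended values ($v(j)+1$ for $j \in J$, and $k$ when $k \in J$), yielding the same count $\stair(J)_v = v - |J \cap [v]|$ of legal choices at step $v$. Your verification that the end gap never coincides with a right-of-$j$ gap along a legal construction plays the same role as the paper's observation that the entry at a position $j \in J$ inductively satisfies $v(j) < k-1$.
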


\begin{proof}
We consider decomposing the one-line notation of permutations $w = [w(1), \dots, w(n)] \in \symm_n$ 
to the permutation $[1] \in \symm_1$ by iteratively removing the last letter $w(n)$ and `standardizing' to the unique
order-isomorphic permutation in $\symm_{n-1}$.  For example, the permutation
$[6,3,5,1,4,7,2] \in \symm_7$ decomposes as follows:
\begin{center}
$[6,3,5,1,4,7,2]$ \\
$[5,2,4,1,3,6]$ \\
$[5,2,4,1,3]$  \\
$[4,2,3,1]$ \\
$[3,1,2]$ \\
$[2,1]$ \\
$[1]$
\end{center}
Reversing this process, we can build up from $[1] \in \symm_1$ to a permutation in $\symm_n$ by appending a new letter to the 
end at each stage.
In order for the resulting permutation $w = [w(1), \dots , w(n)] \in \symm_n$ to be $J$-Nietzschean, suppose we have a permutation
$[v(1), \dots, v(k-1)] \in \symm_{k-1}$ at some intermediate stage and we want to build a permutation
in $\symm_{k}$. We may append any of the 
numbers in $\{1, \dots, k\}$  to  $[v(1), \dots, v(k-1)]$, except the following.
\begin{itemize}
\item If $k \in J$ is a Nietzschean position, we cannot append $k$, since this would ultimately 
force $w(k) = n$ or force an entry 1 larger than $w(k)$ to appear among $w(k+1), \dots, w(n)$, so that 
$w$ would be $k$-resentful.
\item  Whether or not $k$ is a Nietzschean position, we 
cannot append a value $v(j) + 1$ for any Nietzschean position $j \in J$ satisfying $j < k$, since this  
would ultimately force $w(j) + 1$ to appear among $w(j+1), \dots, w(n)$, so that $w$ would be $j$-resentful.
The value $v(j)$ at a Nietzschean position $j < k$ inductively satisfies $v(j) < k-1$.
\end{itemize}
In general, the conditions above imply that the number of choices to append to $[v(1), \dots, v(k-1)]$ is
\begin{equation}
k+1 - | \{ j \in J \,:\, j \leq k \}|,
\end{equation}
which yields the claimed product formula.
\end{proof}

We will see that $|\NNNN_J| = \dim \CC[\xx_n]/(I_n:f_J)$, so $J$-Nietzschean permutations enumerate bases of 
$\CC[\xx_n] / (I_n : f_J)$.
However, the connection between Nietzschean permutations and colon ideals goes  deeper than this.
To explain, we recall the powerful theory of orbit harmonics.

For any subset $Z \subseteq \CC^n$, let $\II(Z) \subseteq \CC[\xx_n]$ be the ideal of polynomials which vanish on $Z$:
\begin{equation}
\II(Z) := \{ f \in \CC[\xx_n] \,:\, f(\zz) = 0 \text{ for all $\zz \in Z$} \}.
\end{equation}
The quotient ring $\CC[Z] := \CC[\xx_n]/\II(Z)$ is the {\em coordinate ring} of $Z$ and has a natural identification
with the family of polynomial functions $Z \longrightarrow \CC$.  If we assume the locus $Z \subseteq \CC^n$ is 
finite (as we will from here on), by Lagrange interpolation {\bf any} function $Z \longrightarrow \CC$ is the restriction of a polynomial in 
$\CC[\xx_n]$, so we may identify $\CC[Z]$ with the vector space formal $\CC$-linear combinations of elements of $Z$.

The quotient ring $\CC[Z] = \CC[\xx_n]/\II(Z)$ is almost never graded, but there is a way to produce a graded quotient of 
$\CC[\xx_n]$ from $\II(Z)$.  For any nonzero polynomial $f \in \CC[\xx_n]$, let $\tau(f)$ be the highest degree homogeneous component
of $f$. That is, if $f = f_d + \cdots + f_1 + f_0$ where $f_i$ is homogeneous of degree $i$ and $f_d \neq 0$, we have $\tau(f) = f_d$.
We define a new ideal $\gr \, \II(Z) \subseteq \CC[\xx_n]$ by
\begin{equation}
\gr \, \II(Z) := (  \tau(f) \,:\, f \in \II(Z), \, f \neq 0 ) \subseteq \CC[\xx_n].
\end{equation}
The ideal $\gr \, \II(Z)$ is homogeneous by construction. We have an isomorphism of vector spaces
\begin{equation}
\label{orbit-harmonics-isomorphisms}
\CC[Z] = \CC[\xx_n]/\II(Z) \cong \CC[\xx_n]/ \gr \, \II(Z)
\end{equation}
where the latter quotient $\CC[\xx_n]/ \gr \, \II(Z)$ is a graded vector space.
The Hilbert series of $\CC[\xx_n]/ \gr \, \II(Z)$ may be regarded as a $q$-enumerator of $Z$ which depends in a subtle way on the 
embedding of $Z$ inside $\CC^n$.

As an example, if $Z = \symm_n$ is the set of points in $\CC^n$ of the form $[w(1), \dots, w(n)]$ for $w \in \symm_n$,
then $\gr \, \II(\symm_n) = I_n$ is the classical coinvariant ideal and the coinvariant ring 
$R_n = \CC[\xx_n]/I_n$ is obtained in this way.
The following result states that the colon ideals $(I_n : f_J)$ also arise via orbit harmonics.

\begin{theorem}
\label{colon-ideal-identification}
For any subset $J \subseteq [n]$, the following three ideals in $\CC[\xx_n]$ are equal.
\begin{enumerate}
\item  The colon ideal $(I_n : f_J)$.
\item  The ideal $(p_{J,1}, \dots, p_{J,n})$ generated by the homogeneous polynomials $p_{J,1}, \dots, p_{J,n} \in \CC[\xx_n]$.
\item  The homogeneous ideal $\gr \, \II(\NNNN_J)$ attached to the locus $\NNNN_J \subseteq \CC^n$ of
 $J$-Nietzschean permutations in $\symm_n$. Here we consider $\symm_n \subseteq \CC^n$ as the set of rearrangements 
 of the specific point $(1, 2, \dots, n) \in \CC^n$.
\end{enumerate}
If $\III_J \subseteq \CC[\xx_n]$ denotes this common ideal, the Hilbert series of $\CC[\xx_n] / \III_J$ is given by
\begin{equation}
\Hilb \left(  \CC[\xx_n] / \III_J ; q \right) = \prod_{i = 1}^n [\stair(J)_i]_q
\end{equation} 
where $\stair(J) = (\stair(J)_1, \dots, \stair(J)_n)$ is the $J$-staircase.
\end{theorem}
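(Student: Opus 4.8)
The plan is to dispose of the degenerate case $1\in J$ first, then prove the equality $(1)=(2)$ via the Poincar\'e duality criterion of Lemma~\ref{colon-ideal-equality}, and finally prove $(2)=(3)$ via orbit harmonics. If $1\in J$ then $f_J\in I_n$ by Lemma~\ref{f-containment-criterion}, so $(I_n:f_J)=\CC[\xx_n]$; moreover $p_{J,1}=\partial_1 h_1(x_1,\dots,x_n)=1$, so $(p_{J,1},\dots,p_{J,n})=\CC[\xx_n]$; and since every permutation $w$ is $1$-resentful (if $w(1)\neq n$ then $w(1)+1$ necessarily appears among $w(2),\dots,w(n)$), we have $\NNNN_J=\varnothing$ and hence $\gr\,\II(\NNNN_J)=\CC[\xx_n]$. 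Thus all three ideals equal $\CC[\xx_n]$ and the Hilbert series identity degenerates to $0=0$ because $\stair(J)_1=0$. Henceforth assume $1\notin J$.

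For $(1)=(2)$ I would apply Lemma~\ref{colon-ideal-equality} with $\aaa=I_n$, $\aaa'=(p_{J,1},\dots,p_{J,n})$, and $f=f_J$ of degree $k=\deg f_J$. The containment $\aaa'\subseteq(\aaa:f)$ is Lemma~\ref{p-polynomial-containment}, and $f_J\notin I_n$ is Lemma~\ref{f-containment-criterion}. The ring $\CC[\xx_n]/I_n=R_n$ is the complete intersection $\CC[\xx_n]/(e_1,\dots,e_n)$, hence a Poincar\'e duality algebra of socle degree $\sum_{i=1}^n(i-1)=\binom{n}{2}$; by Lemma~\ref{regular-sequence-lemma} the ring $\CC[\xx_n]/\aaa'$ is the complete intersection cut out by a homogeneous regular sequence of degrees $\stair(J)_1,\dots,\stair(J)_n$, hence a Poincar\'e duality algebra of socle degree $r:=\sum_{i=1}^n(\stair(J)_i-1)$. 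The only remaining hypothesis of Lemma~\ref{colon-ideal-equality} is the numerical identity $r+k=\binom{n}{2}$. This holds because the leading monomial of $f_J$ (under the reversed lex order used in the proof of Lemma~\ref{f-containment-criterion}) is $\prod_{i=1}^n x_i^{\,i-\stair(J)_i}$, so $\deg f_J=\binom{n+1}{2}-\sum_{i=1}^n\stair(J)_i$, whence $r+k=\bigl(\sum_i\stair(J)_i-n\bigr)+\bigl(\binom{n+1}{2}-\sum_i\stair(J)_i\bigr)=\binom{n+1}{2}-n=\binom{n}{2}$. Lemma~\ref{colon-ideal-equality} then gives $(p_{J,1},\dots,p_{J,n})=(I_n:f_J)$; write $\III_J$ for this common ideal.

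For $(2)=(3)$ I would write down an explicit inhomogeneous deformation of $f_J$ whose nonvanishing locus on $\symm_n$ is exactly $\NNNN_J$, namely
\[
g_J := \prod_{j\in J}(x_j-n)\prod_{k=j+1}^{n}(x_j+1-x_k)\ \in\ \CC[\xx_n].
\]
Evaluating $g_J$ at the permutation point $(w(1),\dots,w(n))$, the factor indexed by $j\in J$ vanishes precisely when $w(j)=n$ or $w(j)+1\in\{w(j+1),\dots,w(n)\}$, i.e. precisely when $w$ is $j$-resentful; hence the zero locus of $g_J$ in $\symm_n$ is $\symm_n\setminus\NNNN_J$, and $g_J$ is nonzero at every point of $\NNNN_J$. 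Since $\symm_n$ is finite, this yields $(\II(\symm_n):g_J)=\II(\NNNN_J)$. The top-degree form of $g_J$ is $\tau(g_J)=\prod_{j\in J}x_j\prod_{k>j}(x_j-x_k)=f_J$. Consequently, for any nonzero $h\in\II(\NNNN_J)$ we have $hg_J\in\II(\symm_n)$, so $\tau(h)\cdot f_J=\tau(h)\tau(g_J)=\tau(hg_J)\in\gr\,\II(\symm_n)=I_n$, i.e. $\tau(h)\in(I_n:f_J)=\III_J$; this proves $\gr\,\II(\NNNN_J)\subseteq\III_J$. On the other hand, the orbit-harmonics isomorphism \eqref{orbit-harmonics-isomorphisms} together with Proposition~\ref{superman-count} gives $\dim_\CC\CC[\xx_n]/\gr\,\II(\NNNN_J)=|\NNNN_J|=\prod_{i=1}^n\stair(J)_i$, which by Lemma~\ref{regular-sequence-lemma} is also $\dim_\CC\CC[\xx_n]/\III_J$. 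An inclusion of homogeneous ideals whose quotients have equal finite dimension is an equality, so $\gr\,\II(\NNNN_J)=\III_J$, completing the chain $(1)=(2)=(3)$. The asserted Hilbert series of $\CC[\xx_n]/\III_J$ is then just the complete-intersection Hilbert series $\prod_{i=1}^n[\stair(J)_i]_q$ supplied by Lemma~\ref{regular-sequence-lemma}.

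I expect the substantive difficulty to be bookkeeping rather than ideas: arranging the socle-degree arithmetic so that the hypotheses of Lemma~\ref{colon-ideal-equality} are met, and recognizing $g_J$ as the correct deformation of $f_J$ --- in particular verifying that its $\symm_n$-zero-locus is the resentful set on the nose. The genuinely nontrivial work (the miraculous identity of Lemma~\ref{miracle-identity}, the regular sequence of Lemma~\ref{regular-sequence-lemma}, and the containment of Lemma~\ref{p-polynomial-containment}) has already been carried out; this theorem merely assembles those pieces.
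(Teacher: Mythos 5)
Your proposal is correct and takes essentially the same route as the paper: the degenerate case $1\in J$, the equality $(1)=(2)$ via Lemma~\ref{colon-ideal-equality} with exactly the same socle-degree arithmetic, and the equality with $(3)$ via the same inhomogeneous deformation $\prod_{j\in J}(x_j-n)\prod_{k>j}(x_j-x_k+1)$ together with the orbit-harmonics dimension count from Proposition~\ref{superman-count}. The only cosmetic difference is that you justify $\gr\,\II(\NNNN_J)\subseteq(I_n:f_J)$ directly through the finite-set identity $(\II(\symm_n):g_J)=\II(\NNNN_J)$ and the multiplicativity $\tau(hg_J)=\tau(h)\tau(g_J)$, whereas the paper routes the same step through the radical deformed ideal $\tilde{I}_n$ and the Nullstellensatz; the two justifications are equivalent.
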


\begin{proof}
Suppose $1 \in J$. Lemma~\ref{f-containment-criterion} states that $f_J \in I_n$, so that  $(I_n : f_J) = \CC[\xx_n]$.
Furthermore, we have $p_{J,1} = \partial_1 h_1(x_1, \dots, x_n) = 1$, so that $(p_{J,1}, \dots, p_{J,n}) = \CC[\xx_n]$.
Finally, since every permutation $w \in \symm_n$ is 1-resentful, we have $\NNNN_J = \varnothing$ so that 
$\gr \, \II(\NNNN_J) = \CC[\xx_n]$.  Since $\stair(J)_1 = 0$, we are done in this case and assume that $1 \notin J$ going forward.

Lemma~\ref{p-polynomial-containment} yields the containment of ideals 
\begin{equation}
(p_{J,1}, \dots, p_{J,n}) \subseteq (I_n : f_J)
\end{equation}
so that $(2) \subseteq (1)$.  
We apply Lemma~\ref{colon-ideal-equality} with 
$\aaa = I_n, \aaa' = (p_{J,1}, \dots, p_{J,n}),$ and $f = f_J$. We check the conditions of this lemma.
\begin{itemize}
\item The ideal $I_n$ is generated by the regular sequence $e_1, \dots, e_n \in \CC[\xx_n]$.  The Artinian quotient 
$\CC[\xx_n]/(e_1, \dots, e_n)$ is a complete intersection, and hence Gorenstein. Artinian Gorenstein graded quotients of 
$\CC[\xx_n]$ are Poincar\'e duality algebras; see e.g. \cite[Prop. 2.1]{MW}. The socle
degree of $I_n$ is ${n \choose 2}$.\footnote{The ring $R_n = \CC[\xx_n]/I_n$ is also
 a Poincar\'e duality algebra because it presents
the cohomology of a compact smooth complex projective variety: the flag variety.} 
\item Since $1 \notin J$, Lemma~\ref{regular-sequence-lemma} implies that $p_{J,1}, \dots, p_{J,n}$ is a regular sequence, so that the 
quotient 
$\CC[\xx_n] / (p_{J,1}, \dots, p_{J,n})$ is also a Poincar\'e duality algebra. The socle degree of this algebra is 
$\deg p_{J,1} + \cdots + \deg p_{J,n} - n = \stair(J)_1 + \cdots + \stair(J)_n - n$.
\item Since $1 \notin J$, Lemma~\ref{f-containment-criterion} implies $f_J \notin I_n$. Furthermore, the polynomial $f_J$ has degree
$\deg f_J = \sum_{i = 1}^n (i - \stair(J)_i)$.
\end{itemize}
Since we have
\begin{equation}
\stair(J)_1 + \cdots + \stair(J)_n - n +  \sum_{i = 1}^n (i - \stair(J)_i) = {n \choose 2},
\end{equation}
we may apply Lemma~\ref{colon-ideal-equality} to conclude 
\begin{equation}
(p_{J,1}, \dots, p_{J,n}) = (I_n : f_J)
\end{equation}
so that (1) = (2). This also implies that the claimed Hilbert series formula holds for $\III_J = (1)$ or $(2)$.

For any radical ideals $\III, \JJJ \subseteq \CC[\xx_n]$, the colon ideal $(\III:\JJJ) = \{ f \in \CC[\xx_n] \,:\, f \cdot \JJJ \subseteq \III \}$ 
has the interpretation
\begin{equation}
\VV(\III:\JJJ) = \overline{ \VV(\III) - \VV(\JJJ)}
\end{equation}
in terms of varieties in $\CC^n$, where the bar stands for Zariski closure. 
If $\VV(\III)$ is a finite locus of points, the bar can be removed.

Write $\RRRR_J := \symm_n - \NNNN_J$ for the resentful complement of the $J$-Nietzschean permutations in $\symm_n$.
Recall that we take the specific embedding of $\symm_n \subset \CC^n$ by taking all rearrangements of the coordinates of
$(1, 2, \dots, n) \in \CC^n$.  This also embeds $\RRRR_J$ and $\NNNN_J$ inside $\CC^n$.

 The (inhomogeneous) polynomial
\begin{equation}
\tilde{f}_J := \prod_{j \in J} (x_j - n) \prod_{i > j} (x_j - x_i + 1)
\end{equation}
vanishes on $\RRRR_J$. 
In fact, we have 
\begin{equation}
\NNNN_J = \symm_n - \VV(\tilde{f}_J) = \VV(\tilde{I}_n) - \VV(\tilde{f}_J)
\end{equation}
where $\tilde{I}_n$ is the `deformed version' of the classical coinvariant ideal
\begin{equation}
\tilde{I}_n := \langle e_d(x_1, \dots, x_n) - e_d(1, \dots, n) \,:\, 1 \leq d \leq n \rangle.
\end{equation}
Since $\tilde{I}_n$ is radical and $\tilde{f}_J$ has no repeated factors, the Nullstellensatz implies
\begin{equation}
\II(\NNNN_J) = \II ( \VV(\tilde{I}_n) - \VV(\tilde{f}_J) ) = \II(\VV(\tilde{I}_n : \tilde{f}_J)) = \sqrt{ (\tilde{I}_n : \tilde{f}_J) } =
(\tilde{I}_n : \tilde{f}_J)
\end{equation}
where $\sqrt{\cdot}$ stands for the radical of an ideal. Taking associated graded ideals gives
\begin{equation}
\gr \, \II(\NNNN_J) = \gr \, (\tilde{I}_n : \tilde{f}_J) \subseteq ( \gr \, \tilde{I}_n : f_J ) = (I_n  : f_J)
\end{equation}
where the containment $\subseteq$  is justified by considering the leading term of a polynomial $\tilde{g} \in \CC[\xx_n]$
such that $\tilde{g} \cdot \tilde{f}_J \in \tilde{I}_n$.

For arbitrary ideals $\III$ and polynomials $f$,
 the containment $\gr \, (\III : f ) \subseteq (\gr \, \III : \tau(f) )$ can certainly be strict.
However, in our setting, Proposition~\ref{superman-count} and the fact that 
\begin{equation}
\dim \CC[\xx_n] / (I_n : f_J) = \prod_{i = 1}^n \stair(J)_i = |\NNNN_J|
\end{equation}
imply
\begin{equation}
|\NNNN_J| = \dim \CC[\xx_n] / \gr \, \II(\NNNN_J) \leq \dim  \CC[\xx_n]/(I_n:f_J) = |\NNNN_J|
\end{equation}
which forces $\gr \, \II(\NNNN_J) = (I_n : f_J)$ so that (1) = (3) and the theorem is proved.
\end{proof}

\section{Operator theorem and Hilbert series}
\label{Operator}

\subsection{Operator theorem}
We are ready to give our characterization of the harmonic space $SH_n = SI_n^{\perp} \subseteq \Omega_n$.
The following result was conjectured by Swanson and Wallach \cite{SW2}, and was previously conjectured by 
N. Bergeron, Li, Machacek, Sulzgruber, and Zabrocki (unpublished).

\begin{theorem}
\label{superharmonic-space-characterization}
{\em (Operator Theorem)}
The superharmonic space $SH_n \subseteq \Omega_n$ is generated as a $\CC[\xx_n]$-module under the $\odot$-action 
by $d_I(\delta_n)$ for subsets $I \subseteq [n-1]$.  In symbols, we have
\begin{equation}
\label{superharmonic-sum}
SH_n = \sum_{I \subseteq [n-1]} \CC[\xx_n] \odot d_I(\delta_n).
\end{equation}
\end{theorem}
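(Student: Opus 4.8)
The plan is to establish the containment $\supseteq$ first and then leverage a dimension count to upgrade it to equality. The containment $\sum_{I \subseteq [n-1]} \CC[\xx_n] \odot d_I(\delta_n) \subseteq SH_n$ is already known: it is exactly the statement that $SH'_n \subseteq SH_n$, which Swanson and Wallach proved and which we have cited in Definition~\ref{shprime-defn} and the discussion following it. (Strictly, $SH'_n$ is closed under \emph{all} $d_i$ with $i \ge 1$, but since $d_i(\delta_n) = 0$ for $i \ge n$ on degree grounds and $d_i$'s acting after a first application only lower bosonic degree, the sum over $I \subseteq [n-1]$ already captures everything; I would spell out this reduction carefully, noting $\deg \delta_n = \binom n 2$ and that $d_I(\delta_n)$ can be nonzero only when the multiset of exponents is compatible.) So the entire content is the reverse containment, and for that it suffices to prove the bigraded dimension identity
\begin{equation}
\label{sh-dim-goal}
\dim_\CC \left( \sum_{I \subseteq [n-1]} \CC[\xx_n] \odot d_I(\delta_n) \right) \;\geq\; \dim_\CC SR_n,
\end{equation}
since $SH_n \cong SR_n$ and the left side is contained in $SH_n$.

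The key step is to filter the space on the left of~\eqref{sh-dim-goal} by the Gale order on fermionic degree and identify the associated graded pieces with the commutative quotients $\CC[\xx_n]/(I_n : f_J)$. Concretely: for each $J \subseteq [n]$ with $1 \notin J$, the differential operator $\DD_J$ of Definition~\ref{d-operator-definition} is a $\CC[\xx_n]$-linear combination (under $\odot$) of the $d_I$ operators, so $\DD_J(\delta_n) \in \sum_{I} \CC[\xx_n] \odot d_I(\delta_n)$, and more generally $g \odot \DD_J(\delta_n)$ lies in this space for every $g \in \CC[\xx_n]$. By Lemma~\ref{f-product} we have the triangularity relation
\begin{equation}
g \odot \DD_J(\delta_n) \;\doteq\; \big(g \cdot f_J \odot \delta_n\big) \cdot \theta_J \;+\; \sum_{K <_\Gale J} (\text{stuff}) \cdot \theta_K.
\end{equation}
As $g$ ranges over $\CC[\xx_n]$, the polynomial $g \cdot f_J \odot \delta_n$ ranges over $\CC[\xx_n] \odot (f_J \odot \delta_n)$, and since $\ann_{\CC[\xx_n]}(\delta_n) = I_n$, the assignment $g \mapsto g \cdot f_J \odot \delta_n$ has kernel exactly $(I_n : f_J)$; hence the span of these leading terms, as a subspace of $\CC[\xx_n] \cdot \theta_J$, has dimension $\dim \CC[\xx_n]/(I_n : f_J)$. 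Now one runs the standard triangularity/straightening argument in the \emph{opposite} direction from Section~\ref{Upper}: ordering the $\theta_J$ by Gale order and peeling off Gale-maximal leading terms, one shows that for each $J$ the subspace $\sum_I \CC[\xx_n] \odot d_I(\delta_n)$ surjects, in its $\theta_J$-graded component modulo higher Gale terms, onto a space of dimension at least $\dim \CC[\xx_n]/(I_n:f_J)$. Summing over all $J$,
\begin{equation}
\dim_\CC \left( \sum_{I \subseteq [n-1]} \CC[\xx_n] \odot d_I(\delta_n) \right) \;\geq\; \sum_{J \subseteq [n]} \dim_\CC \big(\CC[\xx_n]/(I_n:f_J)\big) \cdot z^{|J|},
\end{equation}
where $z$ bookkeeps fermionic degree; when $1 \in J$ the corresponding summand is $0$, consistent with $\stair(J)_1 = 0$. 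By Theorem~\ref{colon-ideal-identification} each summand equals $\prod_{i=1}^n [\stair(J)_i]_q$, and by Lemma~\ref{ss-dimension} these sum to $\sum_{k=1}^n z^{n-k} [k]!_q \Stir_q(n,k)$, which by Proposition~\ref{upper-bound-dimension} is an upper bound for $\Hilb(SR_n;q,z)$. Chaining the inequalities with $SH_n \cong SR_n$ forces equality everywhere, proving~\eqref{superharmonic-sum}.

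The main obstacle, and the step deserving the most care, is the bigraded bookkeeping in the triangularity argument: one must check that the leading-term map respects \emph{both} gradings, so that the dimension bound holds bidegree-by-bidegree and not just in total dimension — otherwise the squeeze between the lower bound here and the upper bound of Proposition~\ref{upper-bound-dimension} does not close. This requires knowing that $\DD_J$ is bihomogeneous of the correct bidegree (fermionic degree $|J|$, bosonic degree tracking the $\stair(J)$ shift, which follows from Lemma~\ref{big-determinant} and the degree count $\deg f_J = \sum_i (i - \stair(J)_i)$ together with $\deg \delta_n = \binom n 2$), and that the Gale-triangular error terms live in strictly higher fermionic-Gale strata so the peeling terminates. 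A secondary technical point is confirming that restricting the index set of $d_I$'s to $I \subseteq [n-1]$ genuinely loses nothing: any $\DD_J$ with $J \subseteq [n]$, $1 \notin J$, is built from $d_I$ with $I \subseteq \{j : j \le \max([n]-J)\}$, and since $n \notin J$ is not automatic one must observe that the coefficient $\Delta_{[n]-J,([n]-I)^*}(\HHH)$ vanishes unless $I^* \le_\Gale J$, which (because $1 \notin J$ forces $n \notin I^*$, i.e. $1 \notin I$... wait — rather, $n \notin I$) confines $I$ to $[n-1]$; I would state this containment of index sets as a short lemma before the main argument.
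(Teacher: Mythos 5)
Your proposal is correct and follows essentially the same route as the paper: one containment is the cited Swanson--Wallach result $SH'_n \subseteq SH_n$, and the reverse is a dimension squeeze built from the Gale triangularity of $\DD_J(\delta_n)$ (Lemma~\ref{f-product}), the identification $\ann_{\CC[\xx_n]}(\delta_n) = I_n$ so that the leading coefficients $(g f_J)\odot\delta_n$ span a space of dimension $\dim \CC[\xx_n]/(I_n:f_J)$ (Theorem~\ref{colon-ideal-identification}), and the upper bound of Proposition~\ref{upper-bound-dimension} via Lemma~\ref{ss-dimension}. The only differences are cosmetic: the paper obtains the lower bound on $\dim SH'_n$ by pairing against test elements $g_J\cdot\theta_J$ and extracting a Gale-\emph{minimal} index rather than your Gale filtration by leading terms (equivalent bookkeeping, and total dimension suffices --- no bidegree-by-bidegree argument is needed); also note that your prose twice flips the triangularity direction (the error terms lie in strictly \emph{lower} Gale strata, matching your displayed formula), and the reduction to $I \subseteq [n-1]$ is simply because $d_I(\delta_n) = 0$ whenever $n \in I$, not because of the vanishing of the minor $\Delta_{[n]-J,([n]-I)^*}(\HHH)$.
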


The sum appearing in Theorem~\ref{superharmonic-space-characterization} is not direct. Since $d_i(\delta_n) = 0$ whenever
$i > n$ and  we have $d_i d_j = - d_j d_i$, Theorem~\ref{superharmonic-space-characterization} may be rephrased as follows.
\begin{quote}
{\em The superharmonic space $SH_n$ is the smallest linear subspace of $\Omega_n$ which
\begin{itemize}
\item contains the Vandermonde determinant $\delta_n$,
\item is closed under the differentiation operators $\partial_1, \dots, \partial_n$ acting on the $x$-variables, and
\item is closed under the higher derivative operators $d_i$ for $i \geq 1$.
\end{itemize}}
\end{quote}

\begin{proof} 
Observe that the sum on the RHS of Equation~\eqref{superharmonic-sum} is the space $SH'_n$ of 
Definition~\ref{shprime-defn}.
As explained after Definition~\ref{shprime-defn}, Swanson and Wallach proved \cite{SW2} that 
 $SH'_n \subseteq SH_n$.
Since $SR_n \cong SH_n$, Corollary~\ref{upper-bound-dimension} gives an upper bound on the dimension of $SH_n$.
In order to show that this containment is  an equality, we use the $\DD_J$ operators and the colon ideals
$(I_n : f_J)$ to show that the dimension of $SH'_n$ is sufficiently large.

Let $J \subseteq [n]$.
Applying the differential operator $\DD_J$ to $\delta_n$ yields an element $\DD_J(\delta_n) \in SH'_n$.
We use our lemmata to derive the following facts about the superspace element $\DD_J(\delta_n)$.

\begin{itemize}
\item  By Lemma~\ref{image-of-derivative} and the vanishing assertion of Lemma~\ref{f-product},
the coefficient of $\theta_K$ in $\DD_J(\delta_n)$ is zero unless $K \leq_{\Gale} J$.
\item By Lemma~\ref{image-of-derivative} and the product formula in Lemma~\ref{f-product}, the
coefficient of $\theta_J$ in $\DD_J(\delta_n)$ is 
$\pm \, f_J \odot \delta_n $.
\end{itemize}

For any element $f \in \Omega_n$, the annihilator 
\begin{equation}
\ann_{\CC[\xx_n]} f = \{ g \in \CC[\xx_n] \,:\, g \odot f = 0 \} \subseteq \CC[\xx_n]
\end{equation}
is an ideal in the polynomial ring $\CC[\xx_n]$. For any subset $J \subseteq [n]$, we calculate
\begin{equation}
\ann_{\CC[\xx_n]}( f_J \odot \delta_n) = ( \ann_{\CC[\xx_n]} \delta_n : f_J) = (I_n : f_J)
\end{equation}
where we used the fact that the annihilator of the Vandermonde $\delta_n$ is the classical coinvariant ideal $I_n$.
We claim that there exists a set $\BBB_n(J) \subseteq \CC[\xx_n]$ of homogeneous polynomials such that
\begin{itemize}
\item The set $\BBB_n(J)$ has degree generating function $\sum_{g \in \BBB(J)} q^{\deg(g)} = \prod_{i = 1}^n [\stair(J)_i]_q$ and
\item  the set 
$\{ g \odot (f_J \odot \delta_n) \,:\, g \in \BBB_n(J) \}$ of polynomials in $\CC[\xx_n]$ is linearly independent.
\end{itemize}
Indeed, Theorem~\ref{colon-ideal-identification} implies  that there exists 
a set $\BBB_n(J) \subseteq \CC[\xx_n]$ of homogeneous
 polynomials with the given degree generating function which descends to a linearly independent subset of 
 $\CC[\xx_n] / (I_n : f_J )$.
Since $\ann_{\CC[\xx_n]}(\delta_n) = I_n$,
for any such $\BBB_n(J)$ the set of polynomials 
$\{ g \odot (f_J \odot \delta_n) \,:\, g \in \BBB_n(J) \}$ will be linearly independent in $\CC[\xx_n]$.

We combine our observations to prove the theorem. Suppose that some linear combination
\begin{equation}
\sum_{J \subseteq [n]} \sum_{g_J \in \BBB_n(J)} c_{J,g_J} (g_J \cdot \theta_J) \in \Omega_n
\end{equation}
(where the $c_{J,g_J} \in \CC$ are scalars)
annihilates the space $SH'_n$ as a differential operator:
\begin{equation}
\left(\sum_{J \subseteq [n]} \sum_{g_J \in \BBB_n(J)} c_{J,g_J} (g_J \cdot \theta_J) \right) \odot SH'_n = 0.
\end{equation}
By fermionic homogeneity, we may as well assume that
\begin{quote}
$(\star)$ for all $J \subseteq [n]$ such that there is some $c_{J,g_J} \neq 0$, the set $J$ has a fixed size.
\end{quote}
In particular, for any $K \subseteq [n]$ we have 
\begin{equation}
\left(\sum_{J \subseteq [n]} \sum_{g_J \in \BBB_n(J)} c_{J,g_J} (g_J  \cdot \theta_J) \right) \odot \DD_K(\delta_n) = 0.
\end{equation}
Working towards a contradiction, assume that at least one of the scalars $c_{J,g_J} \in \CC$ is nonzero. 
Choose $J_0 \subseteq [n]$ minimal under the Gale order such that at least one $c_{J_0, g_{J_0}}$ is nonzero.
Letting $K = J_0$, we have
\begin{align}
0 &= \left(\sum_{J \subseteq [n]} \sum_{g_J \in \BBB_n(J)} c_{J,g_J} ( g_J \cdot \theta_J) \right) \odot \DD_{J_0}(\delta_n) \\
&\doteq 
 \left( \sum_{g_{J_0} \in \BBB_n(J_0)} c_{J_0,g_{J_0}} \cdot g_{J_0} \right)  \odot \text{(coefficient of $\theta_{J_0}$ in $\DD_{J_0}(\delta_n)$)} \\
&=  \sum_{g_{J_0} \in \BBB_n(J_0)} c_{J_0,g_{J_0}} \cdot g_{J_0} \odot \left[ \pm  f_{J_0} \odot \delta_n \right]
\end{align}
where the second equality follows from the homogeneity assumption $(\star)$ and our Gale minimality assumption and
$\doteq$ denotes equality up to a nonzero scalar.
The linear independence of the set $\{ g_{J_0} \odot  ( f_{J_0} \odot \delta_n ) \,:\, g_{J_0} \in \BBB_n(J_0) \}$ forces
$c_{J_0,g_{J_0}} = 0$ for all $g_{J_0} \in \BBB_n(J_0)$, which is a contradiction.

We have the chain of inequalities
\begin{equation}
\sum_J |\BBB_n(J) | \leq \dim SH'_n \leq \dim SH_n = \dim SR_n \leq \sum_J |\BBB_n(J)| 
\end{equation}
where the first inequality comes from the last paragraph, the second inequality follows because $SH'_n \subseteq SH_n$, 
the equality holds because $SH_n$ is the harmonic space to the quotient $SR_n$, and the last inequality holds because of 
Corollary~\ref{upper-bound-dimension}.
These are all equalities, forcing $SH_n = SH'_n$.
\end{proof}

\subsection{Hilbert series}
Our goal in this subsection is to calculate the Hilbert series of $SR_n$ and describe a method for producing bases of $SR_n$.
The key to our approach is the following general linear independence criterion.

\begin{lemma}
\label{general-linear-independence-lemma}
Suppose that for each $J \subseteq [n]$, we have a set $\CCC_n(J) \subseteq \CC[\xx_n]$ of homogeneous polynomials
such that $\CCC_n(J)$ descends to a linearly independent subset of $\CC[\xx_n] / (I_n : f_J)$. Then the set
$\CCC_n \subseteq \Omega_n$ given by
\begin{equation}
\CCC_n := \bigsqcup_{J \subseteq [n]} \CCC_n(J) \cdot \theta_J
\end{equation}
descends to a linearly independent subset of $SR_n$.
\end{lemma}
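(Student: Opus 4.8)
The plan is to establish linear independence in $SR_n$ by passing to the harmonic space $SH_n$ and using the differential operators $\DD_J$ to detect the coefficients of the purported dependence one Gale-degree at a time, exactly as in the proof of Theorem~\ref{superharmonic-space-characterization}. Suppose some linear combination $\sum_{J \subseteq [n]} \sum_{g_J \in \CCC_n(J)} c_{J,g_J}\,(g_J \cdot \theta_J)$ lies in $SI_n$; equivalently, applying $\odot$, it annihilates the harmonic space $SH_n = SH'_n$. By fermionic homogeneity we may assume all $J$ with a nonzero coefficient have a fixed cardinality. Working toward a contradiction, pick $J_0$ minimal in Gale order with some $c_{J_0,g_{J_0}} \neq 0$ and pair the combination against $\DD_{J_0}(\delta_n) \in SH'_n = SH_n$.

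The key computation is the one already carried out in the proof of the Operator Theorem: by Lemma~\ref{image-of-derivative} and Lemma~\ref{f-product}, the coefficient of $\theta_K$ in $\DD_{J_0}(\delta_n)$ vanishes unless $K \leq_\Gale J_0$, and the coefficient of $\theta_{J_0}$ is $\pm\, f_{J_0} \odot \delta_n$. Combined with the Gale-minimality of $J_0$ and the fixed-cardinality assumption, pairing the combination with $\DD_{J_0}(\delta_n)$ kills every term except those indexed by $J_0$, yielding
\[
0 \doteq \sum_{g_{J_0} \in \CCC_n(J_0)} c_{J_0,g_{J_0}} \cdot g_{J_0} \odot \left( f_{J_0} \odot \delta_n \right).
\]
Since $\ann_{\CC[\xx_n]}(\delta_n) = I_n$, we have $\ann_{\CC[\xx_n]}(f_{J_0} \odot \delta_n) = (I_n : f_{J_0})$, so the map $g \mapsto g \odot (f_{J_0} \odot \delta_n)$ has kernel exactly $(I_n : f_{J_0})$. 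Because $\CCC_n(J_0)$ descends to a linearly independent set in $\CC[\xx_n]/(I_n : f_{J_0})$ by hypothesis, the set $\{ g_{J_0} \odot (f_{J_0} \odot \delta_n) : g_{J_0} \in \CCC_n(J_0)\}$ is linearly independent in $\CC[\xx_n]$, forcing all $c_{J_0,g_{J_0}} = 0$ — a contradiction. Hence $\CCC_n$ descends to a linearly independent subset of $SR_n$.

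The only real subtlety, and the step I would be most careful about, is the reduction ``$\sum c_{J,g_J}(g_J\cdot\theta_J)\in SI_n$ implies it annihilates $SH_n$ under $\odot$'' together with the legitimacy of testing against $\DD_{J_0}(\delta_n)$: this requires knowing $\DD_{J_0}(\delta_n)\in SH_n$, which follows since $\DD_{J_0}$ is a $\CC[\xx_n]$-linear combination (via $\partial_i$) of the $d_I$ operators, so $\DD_{J_0}(\delta_n)\in SH'_n$, and $SH'_n=SH_n$ by Theorem~\ref{superharmonic-space-characterization}. Everything else is a transcription of the argument in the Operator Theorem's proof, with the abstract independent sets $\BBB_n(J)$ replaced by the given $\CCC_n(J)$; no new ideas are needed, only the observation that the argument never used that $\BBB_n(J)$ was a \emph{basis}, merely that it was independent in $\CC[\xx_n]/(I_n:f_J)$.
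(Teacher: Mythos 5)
Your proposal is correct and follows essentially the same route as the paper: the paper's proof of Lemma~\ref{general-linear-independence-lemma} also converts the putative dependence into an annihilation statement against $SH_n$, tests against $\DD_{J_0}(\delta_n)$ for a Gale-minimal $J_0$, and invokes the triangularity of Lemma~\ref{f-product} together with $\ann_{\CC[\xx_n]}(f_{J_0}\odot\delta_n)=(I_n:f_{J_0})$ to contradict the linear independence of $\CCC_n(J_0)$ modulo $(I_n:f_{J_0})$. Your remark that only containment $\DD_{J_0}(\delta_n)\in SH'_n\subseteq SH_n$ (rather than the full equality $SH'_n=SH_n$) is needed, and that the argument uses only independence rather than a basis, matches the paper's reasoning exactly.
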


The proof of Lemma~\ref{general-linear-independence-lemma} is quite similar to the proof of
Theorem~\ref{superharmonic-space-characterization}.

\begin{proof}
If not, we could find scalars $c_{J, g_J} \in \CC$ not all zero so that
\begin{equation}
\sum_{J \subseteq [n]} \sum_{g_J \in \CCC_n(J)} c_{J, g_J} (g_J \cdot \theta_J) = 0 \quad \text{in $SR_n$}
\end{equation}
or equivalently
\begin{equation}
\left(   
\sum_{J \subseteq [n]} \sum_{g_J \in \CCC_n(J)} c_{J, g_J} (g_J \cdot \theta_J) 
\right) \odot SH_n = 0.
\end{equation}
If we choose $J_0 \subseteq [n]$ to be Gale-minimal such that $c_{J_0, g_{J_0}} \neq 0$ for some $g_{J_0} \in \CCC_n(J_0)$, 
the relation
\begin{equation}
\left(   
\sum_{J \subseteq [n]} \sum_{g_J \in \CCC_n(J)} c_{J, g_J} (g_J \cdot \theta_J) 
\right) \odot \DD_{J_0} (\delta_n)  = 0
\end{equation}
implies (just as in the proof of Theorem~\ref{superharmonic-space-characterization}) that
\begin{equation}
\sum_{g_{J_0} \in \CCC_n(J_0)} c_{J_0, g_{J_0}}  \cdot g_{J_0} \odot (f_{J_0} \odot \delta_n) = 0
\end{equation}
which contradicts the linear independence of $\CCC_n(J_0)$ in $\CC[\xx_n] / (I_n : f_{J_0})$.
\end{proof}

We have all the tools necessary to calculate the Hilbert series of $SR_n$.
This proves a conjecture \cite[Conj. 6.5]{SS} of Sagan and Swanson.

 \begin{theorem}
\label{hilbert-series}
The bigraded Hilbert series of $SR_n$ is
\begin{equation}
\Hilb(SR_n;q,z) =
 \sum_{k = 1}^n z^{n-k} \cdot [k]!_q \cdot \Stir_q(n,k).
\end{equation}
\end{theorem}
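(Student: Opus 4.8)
The plan is to sandwich $\Hilb(SR_n;q,z)$ between the upper bound of Proposition~\ref{upper-bound-dimension} and a matching lower bound supplied by the linear independence criterion of Lemma~\ref{general-linear-independence-lemma}, and then observe that the two bounds coincide term-by-term.

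First I would, for each $J \subseteq [n]$, invoke Theorem~\ref{colon-ideal-identification}: the colon ideal $(I_n : f_J) = \III_J$ is homogeneous with Hilbert series $\Hilb(\CC[\xx_n]/\III_J ; q) = \prod_{i=1}^n [\stair(J)_i]_q$. Accordingly, choose a finite set $\CCC_n(J) \subseteq \CC[\xx_n]$ of homogeneous polynomials descending to a vector space basis of $\CC[\xx_n]/\III_J$; such a set automatically has degree generating function $\sum_{g \in \CCC_n(J)} q^{\deg g} = \prod_{i=1}^n [\stair(J)_i]_q$ and, a fortiori, descends to a linearly independent subset of $\CC[\xx_n]/(I_n : f_J)$. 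Applying Lemma~\ref{general-linear-independence-lemma} to the family $\{\CCC_n(J)\}_{J \subseteq [n]}$ then shows that $\CCC_n := \bigsqcup_{J \subseteq [n]} \CCC_n(J) \cdot \theta_J$ descends to a linearly independent subset of $SR_n$. Since each element $g \cdot \theta_J$ of $\CCC_n$ is bihomogeneous of bosonic degree $\deg g$ and fermionic degree $|J|$, this yields the coefficientwise lower bound
\[
\Hilb(SR_n;q,z) \;\geq\; \sum_{J \subseteq [n]} z^{|J|} \sum_{g \in \CCC_n(J)} q^{\deg g} \;=\; \sum_{J \subseteq [n]} z^{|J|} \prod_{i=1}^n [\stair(J)_i]_q \;=\; \sum_{k=1}^n z^{n-k} \cdot [k]!_q \cdot \Stir_q(n,k),
\]
where the last equality is Lemma~\ref{ss-dimension}.

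Finally I would combine this with Proposition~\ref{upper-bound-dimension}, which gives the reverse inequality with exactly the same right-hand side; hence both are equalities and $\Hilb(SR_n;q,z) = \sum_{k=1}^n z^{n-k} [k]!_q \Stir_q(n,k)$. As a byproduct $\CCC_n$ descends to a bihomogeneous basis of $SR_n$, which feeds into the basis recipe of Theorem~\ref{basis-recipe}. Because all of the structural work has already been carried out — the miraculous identity, the regular sequence, the operator triangularity, and the colon-ideal identification — there is no genuine obstacle left here; the only point requiring care is that the lower bound is truly \emph{bigraded} rather than a mere dimension count, and this is immediate since $\stair(J)$ records precisely the bosonic degrees appearing in $\CCC_n(J)$ while $|J|$ records the fermionic degree, so the $q,z$-enumerators of the spanning set of Proposition~\ref{upper-bound-dimension} and the independent set above agree monomial-by-monomial.
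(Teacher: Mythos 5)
Your proposal is correct and follows essentially the same route as the paper: choose homogeneous bases of the quotients $\CC[\xx_n]/(I_n : f_J)$, feed them into Lemma~\ref{general-linear-independence-lemma} to get a linearly independent bihomogeneous subset of $SR_n$, and match the resulting lower bound (via Theorem~\ref{colon-ideal-identification} and Lemma~\ref{ss-dimension}) against the upper bound of Proposition~\ref{upper-bound-dimension}. The paper's own proof is exactly this sandwich argument, including the observation that the independent set is thereby forced to be a basis.
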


\begin{proof}
For all subsets  $J \subseteq [n]$, let $B_n(J) \subseteq \CC[\xx_n]$ be a family of homogeneous polynomials 
which descends to a basis of $\CC[\xx_n]/(I_n : f_J)$.
By Theorem~\ref{colon-ideal-identification}, the degree generating function for polynomials in $\BBB_n(J)$ is
\begin{equation}
\sum_{g_J \in \BBB_n(J)} q^{\deg(g_J)} = [\stair(J)_1]_q \cdots [\stair(J)_n]_q.
\end{equation}
Lemma~\ref{general-linear-independence-lemma} guarantees that 
$\BBB_n := \bigsqcup_{J \subseteq [n]} \BBB_n(J) \cdot \theta_J$ descends to a linearly independent subset of 
$SR_n$. On the other hand, Lemma~\ref{ss-dimension} shows that 
\begin{multline}
\Hilb(SR_n;q,z) \geq 
\sum_{J \subseteq [n]} \left( \sum_{g_J \in \BBB_n(J)} q^{\deg(g_J)}  \right) \cdot z^{|J|} \\ =
\sum_{k = 1}^n z^{n-k} \cdot [k]!_q \cdot \Stir_q(n,k) \geq \Hilb(SR_n;q,z)
\end{multline}
where the inequality is a consequence of Proposition~\ref{upper-bound-dimension}.
This forces the linearly independent subset $\BBB_n \subseteq SR_n$ to be a basis and the inequalities to be equalities.
\end{proof}

We present a recipe for building bases of $SR_n$ from bases of the various commutative quotients
$\CC[\xx_n] / (I_n : f_J)$.
We also show how bases of the quotients $\CC[\xx_n] / (I_n : f_J)$ induce bases of the superharmonic space
$SH_n$.  Since $\Omega_n = SH_n \oplus SI_n$, bases of $SH_n$ automatically descend to bases of 
$SR_n = \Omega_n / SI_n$.
Working in $SH_n$ can be useful for machine computations, since we do not need to consider cosets $f + SI_n \in SR_n$.

\begin{theorem}
\label{basis-recipe}
Suppose that, for every subset $J \subseteq [n]$, we have a set $\BBB_n(J) \subseteq \CC[\xx_n]$ of polynomials. Let
\begin{equation}
\BBB_n := \bigsqcup_{J \subseteq [n]} \BBB_n(J) \cdot \theta_J.
\end{equation}
The following are equivalent.
\begin{enumerate}
\item  For all $J \subseteq [n]$, the set $\BBB_n(J)$ descends to a basis of the quotient ring $\CC[\xx_n] / (I_n : f_J)$.
\item  We have a basis of the superharmonic space $SH_n$ given by
\begin{equation}
\bigsqcup_{J \subseteq [n]}
\left\{
 (b_J \cdot \theta_J \odot \DD_J(\delta_n)) \odot \DD_J(\delta_n) \,:\, b_J \in \BBB_n(J) 
\right\}.
\end{equation}
\end{enumerate}
Either of (1) or (2) implies the following.
\begin{enumerate}[start = 3]
\item  The set $\BBB_n$ descends to a basis of $SR_n$.
\end{enumerate}
\end{theorem}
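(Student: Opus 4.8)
The whole statement rests on understanding, for each $J\subseteq[n]$, the linear map $\phi_J\colon\CC[\xx_n]\to SH_n$ given by $\phi_J(b):=(b\cdot\theta_J\odot\DD_J(\delta_n))\odot\DD_J(\delta_n)$ --- this is exactly the construction appearing in (2). I would first record the four facts about $\phi_J$ that drive everything: (i) $\phi_J$ really lands in $SH_n$; (ii) the inner operator simplifies, $b\cdot\theta_J\odot\DD_J(\delta_n)=\pm(b f_J)\odot\delta_n\in\CC[\xx_n]$; (iii) for homogeneous $b$ of degree $d$ the element $\phi_J(b)$ is bihomogeneous of bidegree $(d,|J|)$, supported on $\{\theta_K : K\leq_\Gale J\}$, with $\theta_J$-coefficient $\pm\,(b\odot v)\odot v$ where $v:=f_J\odot\delta_n$; and (iv) $\ker\phi_J=(I_n:f_J)$, so $\phi_J$ factors through an injection $\overline{\phi_J}\colon\CC[\xx_n]/(I_n:f_J)\hookrightarrow SH_n$ whose image has Gale-leading fermionic term $\theta_J$ exactly when the input is nonzero modulo $(I_n:f_J)$.

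For (i)--(iii): by Lemma~\ref{image-of-derivative} and Lemma~\ref{f-product} we have $\DD_J(\delta_n)=\pm(f_J\odot\delta_n)\cdot\theta_J+\sum_{K<_\Gale J}(\FF_{J,K}\odot\delta_n)\cdot\theta_K$, so only $\theta_K$ with $K\leq_\Gale J$ occur and $\DD_J$ is bihomogeneous; applying $\partial(b\cdot\theta_J)=\partial(b)\circ\partial(\theta_J)$ annihilates every term except the $\theta_J$-term (cardinality forces $K=J$), and $a\odot(b\odot c)=(ab)\odot c$ gives (ii). Since $\DD_J(\delta_n)\in SH'_n=SH_n$ (Definition~\ref{shprime-defn}, Theorem~\ref{superharmonic-space-characterization}) and $SH_n$ is closed under $\odot$ by polynomials, $\phi_J(b)=(\pm(bf_J)\odot\delta_n)\odot\DD_J(\delta_n)\in SH_n$; tracking degrees through the displayed expansion of $\DD_J(\delta_n)$ yields (iii). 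For (iv), note $v$ is homogeneous, so $\ann_{\CC[\xx_n]}(v)$ is a homogeneous ideal; the containment $\CC[\xx_n]\odot v\subseteq\ann(v)^{\perp}$ is immediate from $a\odot(b\odot c)=(ab)\odot c$, and comparing graded dimensions (both sides have Hilbert series that of $\CC[\xx_n]/\ann(v)$) gives equality, whence $\ann(v)\cap(\CC[\xx_n]\odot v)=0$ by the orthogonal decomposition $\CC[\xx_n]=\ann(v)\oplus\ann(v)^{\perp}$. Now $\phi_J(b)=0$ iff its $\theta_J$-coefficient $(b\odot v)\odot v=0$ (if that vanishes, $b\odot v\in\ann(v)\cap(\CC[\xx_n]\odot v)=0$, so \emph{all} coefficients of $\phi_J(b)$ vanish), iff $b\odot v\in\ann(v)\cap(\CC[\xx_n]\odot v)=0$, iff $bf_J\in I_n$, iff $b\in(I_n:f_J)$; the same chain shows $\phi_J(b)\neq0\Rightarrow$ its $\theta_J$-coefficient is nonzero, giving (iv).

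Writing $\mathcal{S}:=\bigsqcup_J\{\phi_J(b):b\in\BBB_n(J)\}$, the key reduction is: $\mathcal S$ is linearly independent in $SH_n$ $\iff$ for every $J$ the set $\BBB_n(J)$ descends to a linearly independent subset of $\CC[\xx_n]/(I_n:f_J)$. This is the Gale-triangular argument from the proof of Theorem~\ref{superharmonic-space-characterization}: in a nontrivial dependence among $\mathcal S$, bihomogeneity lets us fix $|J|$; extracting the $\theta_{J_0}$-component for $J_0$ Gale-maximal among participating indices kills all summands with $J\neq J_0$ (each $\phi_J(b)$ is supported on $\theta_K$, $K\leq_\Gale J$), leaving a nontrivial dependence among $\{(b\odot v_{J_0})\odot v_{J_0}:b\in\BBB_n(J_0)\}$ and hence, via $\overline{\phi_{J_0}}$ injective, among the classes of $\BBB_n(J_0)$; the converse is the contrapositive. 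Now combine with the Hilbert-series identities $\Hilb(\CC[\xx_n]/(I_n:f_J);q)=\prod_i[\stair(J)_i]_q$ (Theorem~\ref{colon-ideal-identification}) and $\Hilb(SH_n;q,z)=\Hilb(SR_n;q,z)=\sum_J z^{|J|}\prod_i[\stair(J)_i]_q$ (Theorem~\ref{hilbert-series}, Lemma~\ref{ss-dimension}), together with the bidegree bookkeeping of (iii), which makes $\mathcal S$ have bigraded enumerator $\sum_J z^{|J|}\sum_{b\in\BBB_n(J)}q^{\deg b}$. If (1) holds, $\mathcal S$ is linearly independent with bigraded enumerator equal to $\Hilb(SH_n;q,z)$, hence a basis: this is (2). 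If (2) holds, $\mathcal S$ is a basis, so each $\BBB_n(J)$ is linearly independent in $\CC[\xx_n]/(I_n:f_J)$ and $\sum_J|\BBB_n(J)|=\dim SH_n=\sum_J\dim\CC[\xx_n]/(I_n:f_J)$; since each term satisfies $|\BBB_n(J)|\le\dim\CC[\xx_n]/(I_n:f_J)$, all are equalities, so each $\BBB_n(J)$ is a basis: this is (1). Finally, assuming (1), Lemma~\ref{general-linear-independence-lemma} shows $\BBB_n=\bigsqcup_J\BBB_n(J)\cdot\theta_J$ descends to a linearly independent subset of $SR_n$ of bigraded size $\sum_J z^{|J|}\prod_i[\stair(J)_i]_q=\Hilb(SR_n;q,z)$, hence a basis, which is (3).

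The main obstacle is the first half: getting the $\odot$-bookkeeping exactly right so that $b\cdot\theta_J\odot\DD_J(\delta_n)$ collapses to $\pm(bf_J)\odot\delta_n$ and $\phi_J(b)$ is genuinely bihomogeneous of bidegree $(\deg b,|J|)$, and pinning the kernel $\ker\phi_J=(I_n:f_J)$, which hinges on the nondegeneracy fact $\ann(v)\cap(\CC[\xx_n]\odot v)=0$. Once $\phi_J$ is understood, the remainder is the by-now-standard Gale-triangularity plus the dimension counts already in hand.
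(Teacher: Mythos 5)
Your proposal is correct, and it reaches the equivalence by a route that differs in emphasis from the paper's. The paper defines the single map $\Psi\colon\bigoplus_J \CC[\xx_n]/(I_n:f_J)\to SH_n$, checks well-definedness using only the one containment $(I_n:f_J)\cdot\theta_J\odot\DD_J(\delta_n)=0$, and then proves $\Psi$ is a bijection by the dimension count (Theorems~\ref{colon-ideal-identification} and \ref{hilbert-series}) together with \emph{surjectivity}, established by identifying $(I_n:f_J)^{\perp}=\CC[\xx_n]\odot(f_J\odot\delta_n)$ and inducting on Gale order via Theorem~\ref{superharmonic-space-characterization}; the equivalence of (1) and (2) is then immediate from the isomorphism. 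You instead work \emph{injectivity-first}: you pin down the exact kernel $\ker\phi_J=(I_n:f_J)$ using $\ann(v)\cap(\CC[\xx_n]\odot v)=0$ for $v=f_J\odot\delta_n$, prove linear independence of the image family by extracting the $\theta_{J_0}$-coefficient at a Gale-\emph{maximal} participating index (dual to the paper's Gale-minimal pairing arguments), and close with the same Hilbert-series counts. What your route buys is that the surjectivity/Gale-induction step and the generation statement of the operator theorem are not needed at this point (you only use $\DD_J(\delta_n)\in SH'_n\subseteq SH_n$, closure of $SH_n$ under the $\odot$-action of $\CC[\xx_n]$, and $\dim SH_n$); what the paper's packaging buys is an explicit isomorphism $\Psi$ from which both directions of (1)$\Leftrightarrow$(2) drop out at once. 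Two cosmetic remarks: your claimed \emph{equality} $\CC[\xx_n]\odot v=\ann(v)^{\perp}$ via "comparing graded dimensions" implicitly needs the Gorenstein symmetry of $\CC[\xx_n]/\ann(v)$ (the graded pieces are reversed), but only the containment $\CC[\xx_n]\odot v\subseteq\ann(v)^{\perp}$ plus $\ann(v)\cap\ann(v)^{\perp}=0$ is actually used, so nothing is at stake; and the bigraded-enumerator phrasing in (1)$\Rightarrow$(2) presumes the $\BBB_n(J)$ homogeneous, which the theorem does not assume --- the ungraded count $\sum_J|\BBB_n(J)|=\dim SH_n$ that you also invoke suffices. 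Your treatment of (1)$\Rightarrow$(3) coincides with the paper's (Lemma~\ref{general-linear-independence-lemma} plus the Hilbert series).
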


\begin{proof}
The proof of Theorem~\ref{hilbert-series} shows that (1) implies (3), so it is enough to verify that (1) and (2) are equivalent.

We define a map $\Psi$ of vector spaces 
\begin{equation}
\Psi: \bigoplus_{J \subseteq [n]} \CC[\xx_n] / (I_n : f_J) \longrightarrow SH_n
\end{equation}
by the formula
\begin{equation}
\Psi: \left(  h_J  \right)_{J \subseteq [n]} \longmapsto \sum_{J \subseteq [n]} ( h_J \cdot \theta_J \odot \DD_J(\delta_n) ) \odot \DD_J(\delta_n).
\end{equation}
Since the coefficient of $\theta_J$ in $\DD_J(\delta_n)$ is $\pm (f_J \odot \delta_n)$, we have
\begin{equation}
\left [ (I_n : f_J) \cdot \theta_J \right] \odot \DD_J(\delta_n) = 0
\end{equation}
so that $\Psi$ is well-defined.

We claim that $\Psi$ is a bijection.
Theorems~\ref{colon-ideal-identification} and \ref{hilbert-series} 
imply that the domain and codomain of $\Psi$ have the same 
dimension, so it is enough to show that $\Psi$ is a surjection.
Indeed,
Lemma~\ref{f-product} implies $\DD_J(\delta_n) = (f_J \odot \delta_n) \cdot \theta_J + \Sigma$ where 
$\Sigma \in \bigoplus_{K <_\Gale J} \CC[\xx_n] \cdot \theta_K$.  As a consequence, we have
\begin{equation}
(\CC[\xx_n] \cdot \theta_J) \odot \DD_J(\delta_n) = \CC[\xx_n] \odot (f_J \odot \delta_n) 
\end{equation}
for each $J \subseteq [n]$.
On the other hand, Theorem~\ref{colon-ideal-identification} implies that $\CC[\xx_n] / (I_n : f_J)$ is Artinian Gorenstein with 
socle spanned by $f_J \odot \delta_n$.  It follows that 
\begin{equation}
\CC[\xx_n] \odot (f_J \odot \delta_n) = (I_n : f_J)^\perp
\end{equation}
as ideals in $\CC[\xx_n]$.
Working modulo the subspace $\bigoplus_{K <_\Gale J} \CC[\xx_n] \cdot \theta_K$ we have 
\begin{multline}
\left[(\CC[\xx_n] \cdot \theta_J) \odot \DD_J(\delta_n) \right] \odot \DD_J(\delta_n)  = (I_n : f_J)^\perp \odot \DD_J(\delta_n) \\  \equiv \CC[\xx_n] \odot \DD_J(\delta_n)
\mod \bigoplus_{K <_\Gale J} \CC[\xx_n] \cdot \theta_K.
\end{multline}
The surjectivity of $\Psi$ follows from induction on Gale order and Theorem~\ref{superharmonic-space-characterization}.
\end{proof}

\subsection{Superspace Artin monomials}
Theorem~\ref{basis-recipe} gives a recipe for finding bases $\BBB_n$ of $SR_n$ from bases $\BBB_n(J)$ of the commutative quotients
$\CC[\xx_n] / (I_n : f_J)$.
Although a generic set $\BBB_n(J) \subseteq \CC[\xx_n]$ of polynomials of the appropriate degrees will descend to a basis
of $\CC[\xx_n] / (I_n : f_J)$,
the complexity of the ideals $(I_n : f_J) \subseteq \CC[\xx_n]$ has so far obstructed progress on finding non-generic bases 
$\BBB_n(J)$ of $\CC[\xx_n] / (I_n : f_J)$.  We present a conjecture in this direction.

Define the set of {\em $J$-Artin monomials} by
\begin{equation}
\AAA_n(J) := \left\{ 
x_1^{a_1} \cdots x_n^{a_n} \,:\, a_i < \stair(J)_i
\right\}.
\end{equation}
That is, the set $\AAA_n(J)$ consists of monomials in $\CC[\xx_n]$ whose exponent sequences fit below the $J$-staircase.
We have $\AAA_n(J) = \varnothing$ whenever $1 \in J$. 
If $J = \varnothing$, then $\AAA_n(\varnothing) = \{ x_1^{a_1} \cdots x_n^{a_n} \,:\, a_i < i \}$ was proven by E. Artin \cite{Artin}
to descend to a basis of $R_n$.

\begin{conjecture}
\label{artin-conjecture}
For any subset $J \subseteq [n]$, the $J$-Artin monomials $\AAA_n(J)$ descend to a basis of $\CC[\xx_n] / (I_n : f_J)$.
\end{conjecture}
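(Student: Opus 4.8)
Conjecture~\ref{artin-conjecture} appears difficult in general, and we indicate the approaches we would take. The plan is to reduce it to a linear-independence assertion via a dimension count, and then to attack that assertion by orbit harmonics or by a straightening argument. The set $\AAA_n(J)$ is the product of the monomial sets $\{1, x_i, \dots, x_i^{\stair(J)_i - 1}\}$ for $1 \leq i \leq n$, so its degree generating function is $\prod_{i=1}^{n}[\stair(J)_i]_q$; by Theorem~\ref{colon-ideal-identification} this is precisely $\Hilb(\CC[\xx_n]/(I_n:f_J);q)$, and in particular $|\AAA_n(J)| = \dim_{\CC}\CC[\xx_n]/(I_n:f_J)$. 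Hence it suffices to prove that $\AAA_n(J)$ descends to a linearly independent subset of $\CC[\xx_n]/(I_n:f_J)$. The case $1 \in J$ is vacuous, since then $\stair(J)_1 = 0$ and both $\AAA_n(J)$ and $\CC[\xx_n]/(I_n:f_J)$ are trivial; so we assume $1 \notin J$.

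The first approach uses the orbit-harmonics description $(I_n:f_J) = \gr\,\II(\NNNN_J)$ of Theorem~\ref{colon-ideal-identification}, where $\NNNN_J \subseteq \CC^n$ is the locus of rearrangements of $(1,2,\dots,n)$ indexed by the $J$-Nietzschean permutations. Since $\AAA_n(J)$ is a set of monomials whose degree generating function equals $\Hilb(\CC[\xx_n]/\gr\,\II(\NNNN_J);q)$, the standard orbit-harmonics criterion reduces the conjecture to the invertibility of the square evaluation matrix $\big(m(\zz)\big)_{m\in\AAA_n(J),\, \zz\in\NNNN_J}$. We would establish this invertibility by producing a bijection $\phi\colon\AAA_n(J)\xrightarrow{\ \sim\ }\NNNN_J$ together with a linear extension of a suitable partial order on $\AAA_n(J)$ for which the evaluation matrix is triangular with nonzero diagonal. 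When $J=\varnothing$ this recovers Artin's theorem, with $\phi$ a variant of the Lehmer-code bijection between sub-staircase exponent vectors and $\symm_n$; for general $J$ one must design $\phi$ so that the ``resentment'' constraints cutting $\NNNN_J$ out of $\symm_n$ --- no later entry equals $w(j)+1$, and $w(j)\neq n$, for $j\in J$ --- are matched against the truncated exponent bounds $a_i < \stair(J)_i$.

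A second approach is a straightening argument in the spirit of Lemma~\ref{straightening-lemma}: show that for a suitable monomial order $\prec$ the initial ideal $\mathrm{in}_{\prec}(I_n:f_J)$ is the staircase monomial ideal $\big(x_i^{\stair(J)_i}\colon 1\leq i\leq n\big)$, equivalently that $(I_n:f_J)$ contains, for each $i$, a polynomial of the form $x_i^{\stair(J)_i} + (\prec\text{-lower terms})$. For $i<\min(J)$ the generator $p_{J,i} = h_i(\{i,i+1,\dots,n\})$ already has lexicographic leading term $x_i^{\stair(J)_i}$; for $i\geq\min(J)$ the $p_{J,i}$ do not, and one would have to manufacture the required elements, presumably through an identity in the vein of the miraculous Lemma~\ref{miracle-identity}. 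Once such an initial ideal is identified, the dimension count above forces $\AAA_n(J)$ to be a basis. We expect the main obstacle to be precisely the opacity of the Gr\"obner theory of $(I_n:f_J)$ emphasized throughout this paper: we know of neither a bijection $\AAA_n(J)\leftrightarrow\NNNN_J$ rendering the evaluation matrix triangular, nor an evident family of elements of $(I_n:f_J)$ with pure-power leading terms. In Proposition~\ref{artin-special-case} one of these strategies is carried out in a special case where the relevant combinatorics, respectively polynomial identities, become tractable; the general conjecture remains open.
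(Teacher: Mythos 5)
You have not proved the statement, but neither does the paper: this is Conjecture~\ref{artin-conjecture}, which the paper leaves open, and your proposal correctly identifies its status. Your reduction is exactly the one the paper records after stating the conjecture: since $\AAA_n(J)$ has degree generating function $\prod_i[\stair(J)_i]_q$, which by Theorem~\ref{colon-ideal-identification} equals $\Hilb(\CC[\xx_n]/(I_n:f_J);q)$, it suffices to prove either linear independence or spanning, and the case $1\in J$ is vacuous. Your two suggested attacks are both reasonable and are consistent with what the paper actually does in the one case it settles: Proposition~\ref{artin-special-case} (for Gale-maximal $J=\{r+1,\dots,n\}$) is a spanning argument in the spirit of your second strategy, using that the generators $h_d(x_d,\dots,x_n)$ have lex leading term $x_d^d$, combined with Lemma~\ref{almost-box-h-basis}; note, however, that Lemma~\ref{almost-box-h-basis} is itself proved not by exhibiting an initial ideal but by short exact sequences relating the quotients $\CC[\xx_n]/\JJJ_{r,p,n}$ for varying $p$ (following the Hessenberg-variety technique of Harada--Horiguchi--Murai--Precup--Tymoczko), precisely because the Gr\"obner theory is opaque --- so the ``pure-power leading term'' form of your second approach is likely too strong to hope for in general. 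Your first, orbit-harmonics approach (triangularizing the evaluation matrix against $\NNNN_J$ via a bijection $\AAA_n(J)\leftrightarrow\NNNN_J$) is not pursued in the paper and would be a genuinely different route if the bijection could be constructed. Conversely, the paper sketches one strategy you do not mention: induction down the Gale order using the injection $\CC[\xx_n]/(I_n:f_{s_i\cdot J})\to\CC[\xx_n]/(I_n:f_J)$, $f\mapsto(x_i-x_{i+1})\cdot(s_i\cdot f)$, coming from $(\aaa:fg)=((\aaa:f):g)$, with base case Proposition~\ref{artin-special-case}; the paper reports that this map does not interact transparently with the Artin monomials, which is why the conjecture remains open.
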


Artin's result \cite{Artin} proves Conjecture~\ref{artin-conjecture} when $J = \varnothing$. 
By Theorem~\ref{basis-recipe}, if Conjecture~\ref{artin-conjecture} is true, then
\begin{equation}
\AAA_n = \bigsqcup_{J \subseteq [n]} \AAA_n(J) \cdot \theta_J
\end{equation}
would descend to a basis for $SR_n$.  This would prove a conjecture \cite[Conj. 6.7]{SS} of Sagan and Swanson.
Thanks to Theorem~\ref{colon-ideal-identification}, for any given $J$ it would suffice to prove that 
$\AAA_n(J)$ is linearly independent in or spans $\CC[\xx_n] / (I_n : f_J)$.

We will give evidence for Conjecture~\ref{artin-conjecture} by showing that it holds when
$J = \{r+1, \dots, n-1, n\}$ is Gale-maximal.
This requires a preparatory lemma on certain ideals $\JJJ_{r,p,n} \subseteq \CC[\xx_n]$ generated by 
partial derivatives of $h$-polynomials.

\begin{lemma}
\label{almost-box-h-basis}
Let $r \geq 1$, let $1 \leq p \leq n+1$, and consider the ideal
\begin{equation}
\JJJ_{r,p,n} := \left(
\partial_1 h_r, \partial_2 h_r, \dots, \partial_{p-1} h_r, \partial_{p} h_{r+1}, \dots, \partial_{n-1} h_{r+1}, \partial_n h_{r+1}
\right) \subseteq \CC[\xx_n]
\end{equation}
generated by $n$ partial derivatives of homogeneous symmetric polynomials in the full variable set $\xx_n$. The set
of monomials
\begin{equation}
\MMM_{r,p,n} := \left\{
x_1^{b_1} \cdots x_n^{b_n} \,:\, b_i < r-1 \text{ for $i < p$ and } b_i < r \text{ for $i \geq p$}
\right\}
\end{equation}
descends to a basis for $\JJJ_{r,p,n}$.
\end{lemma}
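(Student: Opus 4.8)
The plan is to reduce the statement (read as: $\MMM_{r,p,n}$ descends to a basis of the quotient $\CC[\xx_n]/\JJJ_{r,p,n}$) to two assertions: (A) the quotient $\CC[\xx_n]/\JJJ_{r,p,n}$ has Hilbert series $[r-1]_q^{\,p-1}\,[r]_q^{\,n-p+1}$, which is exactly the degree generating function of the monomial set $\MMM_{r,p,n}$; and (B) the set $\MMM_{r,p,n}$ spans $\CC[\xx_n]/\JJJ_{r,p,n}$. Since a spanning set of the correct size is automatically a basis, (A) and (B) together finish the proof. The basic tool throughout is the identity
\[
\partial_i h_m(\xx_n)\;=\;\sum_{k=0}^{m-1}(k+1)\,x_i^{\,k}\,h_{m-1-k}\big(\{x_j:j\neq i\}\big),
\]
obtained by differentiating $\prod_j(1-x_jt)^{-1}=(1-x_it)^{-1}\prod_{j\neq i}(1-x_jt)^{-1}$; in particular $\partial_i h_r$ (resp.\ $\partial_i h_{r+1}$) has $x_i$-degree $r-1$ (resp.\ $r$) with $x_i$-leading coefficient $r$ (resp.\ $r+1$), and one has the relation $\partial_i h_{r+1}(\xx_n)=h_r(\xx_n)+x_i\,\partial_i h_r(\xx_n)$.

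For (A), assume $r\geq 2$ (the case $r=1$ is immediate: the quotient is $0$ when $p\geq2$ and $\CC$ when $p=1$, matching $\MMM_{1,p,n}$). The $n$ generators of $\JJJ_{r,p,n}$ are then homogeneous of positive degree, so by Lemma~\ref{regular-sequence-criterion} it suffices to prove $\VV(\JJJ_{r,p,n})=\{0\}$; the Hilbert series is then $\prod_{i<p}[\,r-1\,]_q\prod_{i\geq p}[\,r\,]_q$. Let $\aaa$ be a common zero. Feeding $\partial_i h_{r+1}=h_r+x_i\partial_i h_r$ into Euler's identity $\sum_i x_i\,\partial_i h_r(\xx_n)=r\,h_r(\xx_n)$, evaluated at $\aaa$, gives $r\,h_r(\aaa)=-(n-p+1)\,h_r(\aaa)$ (and $r\,h_r(\aaa)=0$ when $p=n+1$), hence $h_r(\aaa)=0$; then $a_i\,\partial_i h_r(\aaa)=-h_r(\aaa)=0$ for $i\geq p$ and $\partial_i h_r(\aaa)=0$ for $i<p$, so $\partial_i h_r(\aaa)=0$ for every $i$ with $a_i\neq0$. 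Restricting the vanishing coordinates to $0$, the remaining (all nonzero) coordinates would then be a common zero of the system $\{\partial_i h_r(x_k:k\in S)\}_{i\in S}$ on the coordinate subspace indexed by $S=\{i:a_i\neq0\}$, with no zero entry; this contradicts the result of Swanson and Wallach \cite{SW2} used in the proof of Lemma~\ref{regular-sequence-lemma} unless $S=\varnothing$. Hence $\aaa=0$, so $\JJJ_{r,p,n}$ is a complete intersection and (A) holds.

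For (B), I would induct, playing the various $p$ (and $r$, $n$) off each other via $\partial_i h_{r+1}(\xx_n)=h_r(\xx_n)+x_i\,\partial_i h_r(\xx_n)$. Put $\JJJ_0:=(\partial_i h_r:i<p)+(\partial_i h_{r+1}:i>p)$, the generators common to $\JJJ_{r,p,n}$ and $\JJJ_{r,p+1,n}$, and work in $R_0:=\CC[\xx_n]/\JJJ_0$. Euler's identity applied to $h_r(\xx_n)$ modulo $\JJJ_0$ gives $(r+n-p)\,\overline{h_r(\xx_n)}=x_p\,\overline{\partial_p h_r(\xx_n)}$ in $R_0$ (for $p\leq n$), so, writing $\theta:=\overline{\partial_p h_r(\xx_n)}\in R_0$, the elements $\overline{\partial_p h_{r+1}(\xx_n)}$ and $x_p\theta$ generate the same ideal of $R_0$; hence $\CC[\xx_n]/\JJJ_{r,p,n}\cong R_0/(x_p\theta)$ and $\CC[\xx_n]/\JJJ_{r,p+1,n}\cong R_0/(\theta)$. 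Setting $x_p=0$ in $\JJJ_0$ yields, after relabeling the remaining variables, the ideal $\JJJ_{r,p,n-1}$ in $n-1$ variables, so $R_0/(x_p)\cong\CC[\xx_{n-1}]/\JJJ_{r,p,n-1}$; moreover $a\mapsto a\theta$ identifies $(\theta)/(x_p\theta)$ with $R_0/\big((x_p)+\ann_{R_0}\theta\big)$, a quotient of $R_0/(x_p)$ which by the dimension count of (A) (both sides having dimension $(r-1)^{p-1}r^{\,n-p}$) must equal $R_0/(x_p)$. Thus there is a short exact sequence
\[
0\;\longrightarrow\;\CC[\xx_{n-1}]/\JJJ_{r,p,n-1}\;\xrightarrow{\ \cdot\,\theta\ }\;\CC[\xx_n]/\JJJ_{r,p,n}\;\xrightarrow{\ \pi\ }\;\CC[\xx_n]/\JJJ_{r,p+1,n}\;\longrightarrow\;0 .
\]
By induction the outer terms have bases $\MMM_{r,p,n-1}$ and $\MMM_{r,p+1,n}$ (the case $p=n+1$ being handled by the identification $\JJJ_{r,n+1,n}=\JJJ_{r-1,1,n}$, $\MMM_{r,n+1,n}=\MMM_{r-1,1,n}$, which reduces it to smaller $r$, with $r=1$ trivial), and these should dovetail with the splitting $\MMM_{r,p,n}=\MMM_{r,p+1,n}\sqcup\{\,x^b x_p^{\,r-1}:b\in\MMM_{r,p,n-1}\,\}$: the monomials of $\MMM_{r,p+1,n}$ lift into $\CC[\xx_n]/\JJJ_{r,p,n}$ and project to the basis of the right-hand term, while for $b\in\MMM_{r,p,n-1}$ (supported off $x_p$) the element $(\cdot\theta)(\overline{x^b})=\overline{x^b\,\partial_p h_r(\xx_n)}$ equals $r\,\overline{x^b x_p^{\,r-1}}$ plus a correction of $x_p$-degree $<r-1$. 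The crux — and the step I expect to be the main obstacle — is to show that passing from the kernel's basis $\{(\cdot\theta)(\overline{x^b})\}$ to the monomials $\{\overline{x^b x_p^{\,r-1}}\}$, modulo $\mathrm{span}\,\MMM_{r,p+1,n}$, is an isomorphism: one has to reduce the corrections $\sum_{k<r-1}(k+1)x^b x_p^{\,k}h_{r-1-k}(\{x_j:j\neq p\})$ into the $\MMM_{r,p,n}$-basis using the remaining generators of $\JJJ_{r,p,n}$ and verify that the resulting change of basis is unitriangular for a suitable order. This is exactly the difficulty one meets in a direct attack on (B) by straightening, where one uses the relations $\partial_i h_r(\xx_n)\equiv0$ ($i<p$) and $\partial_i h_{r+1}(\xx_n)\equiv0$ ($i\geq p$) to rewrite each forbidden power of a variable $x_i$ in terms of lower powers of $x_i$: the subtle point is termination, because every such rewrite moves degree onto the other variables.
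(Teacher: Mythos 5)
Your part (A) is correct: the identity you use is exactly the paper's quasi-Euler relation \eqref{quasi-euler}, and your variant of the variety argument (deduce $h_r(\aaa)=0$, then $\partial_i h_r(\aaa)=0$ for every $i$ in the support $S$ of $\aaa$, restrict to the variables in $S$, and invoke the Swanson--Wallach lemma in that smaller variable set) is a perfectly good way to get $\VV(\JJJ_{r,p,n})=\{0\}$ and hence, via Lemma~\ref{regular-sequence-criterion}, the Hilbert series $[r-1]_q^{p-1}[r]_q^{n-p+1}$. Your observation $\JJJ_{r,n+1,n}=\JJJ_{r-1,1,n}$ for the base case is also correct and matches what the induction needs.

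The gap is in part (B), and it is exactly where you flag it. Your short exact sequence
\begin{equation*}
0 \rightarrow \CC[\bar{\xx}_{n-1}]/\bar{\JJJ}_{r,p,n-1} \xrightarrow{\;\cdot\,\partial_p h_r\;} \CC[\xx_n]/\JJJ_{r,p,n} \longrightarrow \CC[\xx_n]/\JJJ_{r,p+1,n} \rightarrow 0
\end{equation*}
is fine (the dimension count does force the multiplication-by-$\theta$ map to be injective on $R_0/(x_p)$), but it only yields that $\{\,x^b\,\partial_p h_r : x^b \in \bar{\MMM}_{r,p,n-1}\,\} \sqcup \MMM_{r,p+1,n}$ is a basis of $\CC[\xx_n]/\JJJ_{r,p,n}$ --- a set of non-monomials. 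Passing from this basis to the monomials $x^b x_p^{r-1}$ requires the unitriangularity claim you leave unproved, and it does not follow formally: the correction terms $x^b x_p^k h_{r-1-k}$ with $k<r-1$ must be re-expanded modulo $\JJJ_{r,p,n}$, and since the generators involve $x_p$, these reductions can reintroduce $x_p$-powers $\geq r-1$, so the naive filtration by $x_p$-degree does not control the transition matrix. This is the same termination problem as a direct straightening attack, so as written the proof of (B) is incomplete. The paper avoids the issue by choosing the other connecting map: since $x_p\,\partial_p h_r = \partial_p h_{r+1} - h_r \in \JJJ_{r,p,n}$ (using $h_r \in \JJJ_{r,p,n}$ from \eqref{quasi-euler}), multiplication by $x_p$ gives a well-defined map $\CC[\xx_n]/\JJJ_{r,p+1,n} \rightarrow \CC[\xx_n]/\JJJ_{r,p,n}$ whose cokernel, after setting $x_p \mapsto 0$, is $\CC[\bar{\xx}_{n-1}]/\bar{\JJJ}_{r,p,n-1}$; injectivity again comes from the Hilbert series count, and now the decomposition $\MMM_{r,p,n} = x_p\cdot\MMM_{r,p+1,n} \sqcup \bar{\MMM}_{r,p,n-1}$ finishes the induction immediately because multiplication by the variable $x_p$ and lifting of the cokernel basis send monomials to monomials, so no change-of-basis or straightening argument is ever needed. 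Redirecting your induction through that sequence (keeping your part (A) as is) closes the gap.
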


Lemma~\ref{almost-box-h-basis} says that $\CC[\xx_n]/\JJJ_{r,p,n}$ shares the same monomial basis as the quotient by variable powers
$\CC[\xx_n]/(x_1^{r-1} ,\dots, x_{p-1}^{r-1}, x_p^r, \dots, x_n^r)$.
Since $\JJJ_{r,p,n}$ has inscrutable Gr\"obner theory, our proof of Lemma~\ref{almost-box-h-basis} relies on exact sequences.
Harada, Horiguchi, Murai, Precup, and Tymoczko used a similar style of argument to prove an analogous result
\cite[Thm. 7.1]{HHMPT} on an Artin-like basis for the cohomology rings of regular nilpotent Hessenberg varieties.

\begin{proof}
If $r = 1$ and $p > 1$ then $\partial_1 h_1 = \partial_1 (x_1 + \cdots + x_n) = 1 \in \JJJ_{r,p,n}$ so that $\JJJ_{r,p,n} = \CC[\xx_n]$
is the unit ideal. Since $\MMM_{1,p,n} = \varnothing$, the result is true in this case. We assume that $r > 1$ or $r = 1$ and $p = 1$ going
forward.

We leave it to the reader to verify the formula 
\begin{equation}
\label{quasi-euler}
x_1 \partial_1 h_r +  \cdots + x_{p-1} \partial_{p-1} h_r + \partial_{p} h_{r+1} + \cdots +  \partial_n h_{r+1} = C \cdot h_r
\end{equation}
where $C = r + n - p + 1$.  Since $1 \leq p \leq n+1$ and $r \geq 1$, we have $C > 0$ and Equation~\eqref{quasi-euler} implies that 
\begin{equation}
\label{solid-membership}
h_r \in \JJJ_{r,p,n}.
\end{equation}
In particular, if we let $S = [n] - \{p\}$ we have 
\begin{equation}
\label{clever-membership}
\partial_p h_{r+1} = \partial_p \left(  
x_p h_r + h_{r+1}(S)
\right) =
h_r + x_p \cdot \partial_p h_r \in \JJJ_{r,p,n} 
\end{equation}
so that $\JJJ_{r,p+1,n} \subseteq \JJJ_{r,p,n}$ and $\VV(\JJJ_{r,p,n}) \subseteq \VV(\JJJ_{r,p+1,n})$.
Swanson and Wallach \cite[Lem. 6.2]{SW2} showed that $\VV(\JJJ_{r,n+1,n}) = \{0\}$, so that $\VV(\JJJ_{r,p,n}) = \{0 \}$ (our assumptions 
on $r$ and $p$ guarantee that the generators of $\JJJ_{r,p,n}$ have positive degree).
Lemma~\ref{regular-sequence-criterion}
shows that the generating set of $\JJJ_{r,p,n}$ is a regular sequence, so that 
\begin{equation}
\label{J-ideal-hilbert-series}
\Hilb \left(
\CC[\xx_n] / \JJJ_{r,p,n}; q
\right) = [ r-1 ]_q^{p-1} \cdot [r]_q^{n-p+1}.
\end{equation}

The memberships \eqref{solid-membership} and \eqref{clever-membership} imply that $x_p \cdot \partial_p h_r \in \JJJ_{r,p,n}$, so that 
$x_p \cdot \JJJ_{r,p+1,n} \subseteq \JJJ_{r,p,n}$.  We therefore have an exact sequence
\begin{equation}
\label{preliminary-exact-sequence}
\frac{\CC[\xx_n]}{\JJJ_{r,p+1,n}}  \xrightarrow{ \, \, \times \, x_p \, \,}
\frac{\CC[\xx_n]}{\JJJ_{r,p,n}} \xrightarrow{ \, \, \mathrm{can.} \, \, }
\frac{\CC[\xx_n]}{\JJJ_{r,p,n} + (x_p)} \rightarrow 0
\end{equation}
where the first map is induced by multiplication by $x_p$ and the second map is the canonical projection.
The next step is to identify the target of the second map in this sequence in terms of a smaller variable set.

Let $\bar{\xx}_{n-1} = (x_1, \dots, x_{p-1}, x_{p+1}, \dots, x_n)$ be the variable set $\xx_n$ with $x_p$ removed.
Let 
\begin{equation}
\pi: \CC[\xx_n] \twoheadrightarrow \CC[\bar{\xx}_{n-1}]
\end{equation}
be the  surjection defined by $\pi(x_i) = x_i$ for $i \neq p$ and $\pi(x_p) = 0$.
Let $\bar{\JJJ}_{r,p,n-1} \subseteq \CC[\bar{\xx}_{n-1}]$ be the ideal with the same generating set as $\JJJ_{r,p,n-1}$, but 
in the variable set $\bar{\xx}_{n-1}$. Writing $S = [n] - \{p\}$, for any $d > 0$ and any $i \neq p$ we have the evaluation
\begin{multline}
\pi: \partial_i h_d \mapsto
\left[  
\partial_i h_d
\right]_{x_p \, \rightarrow \, 0} = 
\left[  
\partial_i  (x_p \cdot h_{d-1} + h_d(S))
\right]_{x_p \, \rightarrow \, 0}  \\ =
\left[  
x_p \cdot \partial_i  ( h_{d-1} + h_d(S))
\right]_{x_p \, \rightarrow \, 0} = \partial_i h_d(S)
\end{multline}
Furthermore, we have 
\begin{equation}
\pi: \partial_p h_d \mapsto
\left[  
\partial_p h_d
\right]_{x_p \, \rightarrow \, 0} = 
\left[  
\partial_p (x_p \cdot h_{d-1} + h_d(S))
\right]_{x_p \, \rightarrow \, 0} = h_{d-1}(S).
\end{equation} 
Comparing the generators of $\JJJ_{r,p,n}$ with those of $\bar{\JJJ}_{r,p,n-1}$ and using $h_r(S) \in \bar{\JJJ}_{r,p,n-1}$, 
we conclude that 
\begin{equation}
\pi \left( \JJJ_{r,p,n} + (x_p) \right) = \bar{\JJJ}_{r,p,n-1}
\end{equation}
so that the exact sequence \eqref{preliminary-exact-sequence}
induces a new exact sequence
\begin{equation}
\label{next-exact-sequence}
\frac{\CC[\xx_n]}{\JJJ_{r,p+1,n}}  \xrightarrow{ \, \, \times \, x_p \, \,}
\frac{\CC[\xx_n]}{\JJJ_{r,p,n}} \xrightarrow{ \, \, \psi \, \, }
\frac{\CC[\bar{\xx}_{n-1}]}{\bar{\JJJ}_{r,p,n-1}} \rightarrow 0
\end{equation}
where the surjection $\psi$ is induced by $\pi$.
The Hilbert series formula \eqref{J-ideal-hilbert-series} implies that the dimensions of the vector spaces on either side
of \eqref{next-exact-sequence} add to the dimension of the vector space in the middle, so the first map in
\eqref{next-exact-sequence} is injective and we have a short exact sequence
\begin{equation}
\label{final-J-exact-sequence}
0 \rightarrow
\frac{\CC[\xx_n]}{\JJJ_{r,p+1,n}}  \xrightarrow{ \, \, \times \, x_p \, \,}
\frac{\CC[\xx_n]}{\JJJ_{r,p,n}} \xrightarrow{ \, \, \psi \, \, }
\frac{\CC[\bar{\xx}_{n-1}]}{\bar{\JJJ}_{r,p,n-1}} \rightarrow 0.
\end{equation}
By  induction, we may assume that $\MMM_{r,p+1,n}$ descends to a basis of $\CC[\xx_n]/\JJJ_{r,p+1,n}$ and that 
\begin{equation}
\bar{\MMM}_{r,p,n-1} := \left\{
x_1^{b_1} \cdots x_{p-1}^{b_{p-1}} x_{p+1}^{b_{p+1}} \cdots x_n^{b_n} \,:\, b_i < r-1 \text{ for $i < p$ and }
b_i < r \text{ for $i > p$}
\right\}
\end{equation}
descends to a basis of $\CC[\bar{\xx}_{n-1}]/\bar{\JJJ}_{r,p,n-1}$.  The exactness of 
\eqref{final-J-exact-sequence} and the observation
\begin{equation}
\MMM_{r,p,n} = x_p \cdot \MMM_{r,p+1,n} \sqcup \bar{\MMM}_{r,p,n-1}
\end{equation}
guarantee that 
$\MMM_{r,p,n}$ 
descends to a basis for 
$\CC[\xx_n]/\JJJ_{r,p,n}$, which completes the proof. 
\end{proof}

\begin{proposition}
\label{artin-special-case}
Conjecture~\ref{artin-conjecture} is true when $J = \{r+1, \dots, n-1,n\}$ is a Gale-maximal subset of $[n]$.
\end{proposition}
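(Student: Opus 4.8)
The plan is to identify the ideal $\III_J = (I_n : f_J)$ explicitly, then show by a two-stage monomial reduction that the $J$-Artin monomials $\AAA_n(J)$ span $\CC[\xx_n]/\III_J$; since $|\AAA_n(J)| = \dim \CC[\xx_n]/\III_J$ by Theorem~\ref{colon-ideal-identification}, spanning is enough. If $1 \in J$ then $\AAA_n(J) = \varnothing$ and, by Lemma~\ref{f-containment-criterion} and Theorem~\ref{colon-ideal-identification}, $\III_J = \CC[\xx_n]$, so there is nothing to prove; thus I may assume $J = \{r+1,\dots,n\}$ with $r \ge 1$. Feeding this $J$ into the combinatorial definition of the $p_{J,i}$ --- here $\min(J) = r+1$, and for $i \ge r+1$ one has $J \cup \{i+1,\dots,n\} = \{r+1,\dots,n\}$ and $\max(J \cap \{1,\dots,i\}) = i$ --- and invoking Theorem~\ref{colon-ideal-identification}, a direct computation shows that $\III_J$ is generated by $h_1(\{1,\dots,n\}), h_2(\{2,\dots,n\}), \dots, h_r(\{r,\dots,n\})$ together with $\partial_j h_{r+1}(\{r+1,\dots,n\})$ for $r+1 \le j \le n$. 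A parallel computation gives $\stair(J) = (1,2,\dots,r,r,r,\dots,r)$ with the value $r$ occupying positions $r$ through $n$, so that $\AAA_n(J) = \{x_1^{a_1}\cdots x_n^{a_n} : a_i < i \text{ for } i \le r,\ a_i < r \text{ for } i > r\}$.

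To prove spanning I would rewrite an arbitrary monomial $x^c$ in two stages. \emph{Stage 1} processes $i = 1, 2, \dots, r$ in this order: at step $i$ one uses $h_i(\{i,\dots,n\}) \in \III_J$ in the form $x_i^i \equiv -\bigl(h_i(\{i,\dots,n\}) - x_i^i\bigr) \pmod{\III_J}$ to repeatedly reduce the $x_i$-exponent below $i$. Every monomial of $h_i(\{i,\dots,n\}) - x_i^i$ has $x_i$-degree at most $i-1$, so each application strictly lowers the largest $x_i$-exponent present; and since $h_i(\{i,\dots,n\})$ involves only $x_i, \dots, x_n$, step $i$ leaves the exponents of $x_1, \dots, x_{i-1}$ untouched. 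Hence Stage~1 terminates, expressing $x^c$ modulo $\III_J$ as a $\CC$-linear combination of monomials $x^{c'}$ with $c'_i < i$ for all $i \le r$; this is Artin's classical reduction \cite{Artin} applied to the first $r$ variables. \emph{Stage 2} treats each such $x^{c'} = M' N'$, where $M' = x_1^{c'_1}\cdots x_r^{c'_r}$ and $N' = x_{r+1}^{c'_{r+1}}\cdots x_n^{c'_n}$. Regarding $\CC[x_{r+1}, \dots, x_n]$ as a polynomial ring in $n-r$ variables, the generators $\partial_j h_{r+1}(\{r+1,\dots,n\})$ ($r+1 \le j \le n$) of $\III_J$ are exactly the generators of the ideal $\JJJ_{r,1,n-r}$ appearing in Lemma~\ref{almost-box-h-basis} (the case $p = 1$), and that lemma says the monomials $x_{r+1}^{b_{r+1}}\cdots x_n^{b_n}$ with all $b_j < r$ span the quotient by that ideal. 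Therefore $N' \equiv \sum_b d_b\, x_{r+1}^{b_{r+1}}\cdots x_n^{b_n} \pmod{\III_J}$ with every $b_j < r$; multiplying by $M'$ (which is supported on $x_1,\dots,x_r$ and so does not disturb the exponents of $x_{r+1},\dots,x_n$) yields $x^{c'} \equiv \sum_b d_b\, M' x^b \pmod{\III_J}$, a combination of monomials lying in $\AAA_n(J)$. Composing the two stages shows $\AAA_n(J)$ spans $\CC[\xx_n]/\III_J$, and by the dimension count it is a basis.

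I expect the genuinely substantive ingredient to be Lemma~\ref{almost-box-h-basis}, whose exact-sequence induction (in the spirit of the regular nilpotent Hessenberg arguments cited alongside it) is where the real work lies; granting that lemma, the proposition reduces to bookkeeping. Within the proof of the proposition itself, the two points that need care are the explicit identification of $\III_J$ from the pointer/derivative description of the $p_{J,i}$, and the observation that, because each $h_i(\{i,\dots,n\})$ is supported on $\{x_i, \dots, x_n\}$, the Stage~1 reductions can be carried out one variable at a time in increasing order without ever reintroducing a lower-indexed variable --- which is precisely what allows Stage~1 and Stage~2 to be composed cleanly.
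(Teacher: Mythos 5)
Your proposal is correct and takes essentially the same route as the paper: identify the generators $p_{J,i}$ of $(I_n : f_J)$ via Theorem~\ref{colon-ideal-identification}, use the generators $h_i(x_i,\dots,x_n)$ to force the exponents of $x_1,\dots,x_r$ below the staircase (the paper phrases your Stage~1 rewriting as a one-line lexicographic leading-term observation), then apply Lemma~\ref{almost-box-h-basis} over the variables $\{x_{r+1},\dots,x_n\}$ and conclude by the dimension count from Theorem~\ref{colon-ideal-identification}. The only differences are expository: your explicit two-stage reduction and your identification of the lemma's parameter as the $p=1$ case are exactly what the paper leaves implicit.
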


\begin{proof}
By Theorem~\ref{colon-ideal-identification}, the generators of $(I_n : f_J) \subseteq \CC[\xx_n]$ are 
\begin{multline}
h_1(x_1, \dots, x_n), \, \, 
h_2(x_1, \dots, x_n), \quad \dots \quad
h_r(x_r, \dots, x_n), \\\
\partial_{r+1} h_{r+1}(x_{r+1}, \dots, x_n), \, \, 
\partial_{r+2} h_{r+1}(x_{r+1}, \dots, x_n), \quad \dots \quad
\partial_n h_{r+1}(x_{r+1}, \dots, x_n).
\end{multline}
Since $h_d(x_d, \dots, x_n) = x_d^d + \Sigma$ where $\Sigma$ is a linear combination of terms which are $> x_d^d$
in lexicographial order, we see that $\CC[\xx_n] / (I_n : f_J)$ is spanned by monomials of the form
$x_1^{b_1} \cdots x_n^{b_n}$ where $b_i < i$ for $i \leq r$.
The generators $\partial_i h_{r+1}(x_{r+1}, \dots, x_n)$ of $(I_n : f_J)$ and Lemma~\ref{almost-box-h-basis} (applied over the set
$\{x_{r+1}, \dots, x_n \}$ of variables indexed by $J$) implies that $\AAA_n(J)$ descends to a spanning set of 
$\CC[\xx_n] / (I_n : f_J)$.
This spanning set must be a basis by Theorem~\ref{colon-ideal-identification}.
\end{proof}

Given Proposition~\ref{artin-special-case}, a natural strategy for proving Conjecture~\ref{artin-conjecture}
would be to induct on the position of $J$ in Gale order. The base case of $J$  Gale-maximal is handled by Proposition~\ref{artin-special-case}.
If $i \notin J$ and $i+1 \in J$, we have $s_i \cdot J <_\Gale J$ where $s_i = (i,i+1)$ is the adjacent transposition in $\symm_n$.
Furthermore, the property $(\aaa : f g) = ( (\aaa : f) : g)$ of colon ideals gives rise to a natural injection
\begin{equation}
0 \rightarrow \frac{\CC[\xx_n]}{(I_n : f_{s_i \cdot J})} \xrightarrow{ \, \, \varphi \, \, }
\frac{\CC[\xx_n]}{(I_n : f_J)}
\end{equation}
where $\varphi(f) := (x_i - x_{i+1}) \times s_i \cdot f$ is defined by swapping the variables $x_i \leftrightarrow x_{i+1}$ and multiplying by
$x_i - x_{i+1}$.
Unfortunately, the map $\varphi$ does not relate to the structure of monomials in $\AAA_n(s_i \cdot J)$ and
$\AAA_n(J)$ in an obvious way; this has made Conjecture~\ref{artin-conjecture} resistant to inductive attack.

\section{Conclusion}
\label{Conclusion}

The most glaring open problem of our work is to enhance the Hilbert series result
of Theorem~\ref{hilbert-series} and prove the Fields Conjecture~\ref{fields-conjecture} on the bigraded $\symm_n$-structure of 
$SR_n$. One way to achieve this would be to show that the composite linear map
\begin{equation}
\varphi: 
\bigoplus_{k = 1}^n V_{n,k} \hookrightarrow \Omega_n \twoheadrightarrow SR_n
\end{equation}
is bijective, where $V_{n,k} \subseteq \Omega_n$ are the spaces constructed by the authors \cite{RWVan} and described in the introduction.
Thanks to Theorem~\ref{hilbert-series} and \cite{RWVan},
we know that the domain and target of $\varphi$ have the same vector space dimension,
so we are asking that $\varphi$ have a generic property.
Unfortunately, much like in the case of Conjecture~\ref{artin-conjecture}, proving that $\varphi$ satisfies this generic property has 
exhibited resistance to direct attack.

Various ideas in this paper have made appearances in the theory of Hessenberg varieties.
Lemma~\ref{colon-ideal-equality} on the realization of colon ideals $(\aaa : f)$ by complete intersections
was used by Abe, Horiguchi, Masuda, Murai, and Sato \cite{AHMMS} to relate the cohomology rings of Hessenberg varieties
to derivation modules of hyperplane arrangements associated to down-closed sets in positive root posets.
The polynomials $f_J \in \CC[\xx_n]$ appearing in this paper
 factor into products $\prod_{j \in J} f_{\{j\}}$ labeled by singletons.
In turn, the polynomials $f_{\{j\}}$ labeled by singletons resemble members of a family  $f_{j,i} \in \CC[\xx_n]$ 
of polynomials appearing in the work of Abe, Harada, Horiguchi, and Masuda \cite{AHHM}.
The polynomials $f_{j,i}$ were used to present the cohomology of regular nilpotent Hessenberg varieties using a GKM-style
excision which bears combinatorial resemblance to removing $J$-resentful permutations from $\symm_n$ to arrive at 
$J$-Nietzschean permutations.
An Artin-like basis of these cohomology rings was proven by 
Harada, Horiguchi, Murai, Precup, and Tymoczko \cite{HHMPT}; we use similar techniques
in the proof of Lemma~\ref{almost-box-h-basis} to show in Proposition~\ref{artin-special-case}
that the Artin monomials attached to terminal subsets 
$J = \{ r, r+1, \dots, n \} \subseteq [n]$ descend to a basis of the quotient rings $\CC[\xx_n] / (I_n : f_J)$.
Given these technical parallels, the authors suspect that there is a deeper connection between the supercoinvariant ring $SR_n$
and Hessenberg theory.
We present a conjecture in this direction as follows.

Recall that a finite-dimensional graded $\CC$-algebra $A = \bigoplus_{i = 0}^d A_i$ with $A_d \neq 0$ satisfies
Poincar\'e Duality if $A_d \cong \CC$ is 1-dimensional and if the multiplication $A_i \otimes A_{d-i} \rightarrow A_d \cong \CC$
is a perfect paring for all $0 \leq i \leq d$.
If $A$ satisfies Poincar\'e Duality, an element $\ell \in A_1$ of homogeneous degree 1 is a {\em Lefschetz element} if,
for all $i < d/2$, the map
\begin{equation}
\ell^{d - 2i} \times (-) : A_i \longrightarrow A_{d-i}
\end{equation}
of multiplication by $\ell^{d - 2i}$ is a bijection. If a Lefschetz element $\ell \in A_1$ exists, the algebra $A$ is said to satisfy the 
{\em Hard Lefschetz property}.

Algebras $A$ which satisfy PD and HL arise naturally in geometry. 
If $X$ is a smooth closed complex projective variety, its cohomology ring $A = H^{\bullet}(X)$ satisfies PD and HL 
(here we double the grading by setting $A_i := H^{2i}(X)$).
For example, the coinvariant ring $R_n = \CC[\xx_n]/I_n = H^{\bullet}(\mathrm{Fl}(n))$ satisfies PD and HL.
Maeno, Numata, and Wachi proved  \cite{MNW} that a linear form $\ell = c_1 x_1 + \cdots + c_n x_n$ is a Lefschetz element of 
$R_n$ if and only if the coefficients $c_1, \dots, c_n \in \CC$ are distinct.

Even if a variety $X$ is not smooth, its cohomology ring $H^{\bullet}(X)$ can still satisfy PD and HL.
Abe, Horiguchi, Masuda, Murai, and Sato  proved \cite[Thm. 12.1]{AHMMS} that $H^{\bullet}(X)$ satisfies PD and HL
when $X$ is a regular nilpotent Hessenberg variety, despite the fact that these varieties are usually singular.
Furthermore, a graded algebra $A = \bigoplus_{i = 0}^d A_i$ can still satisfy PD and HL, and so behave like the cohomology ring
of a hypothetical smooth compact variety $X$. 
As we have seen, the quotients $\CC[\xx_n] / (I_n : f_J)$ satisfy PD since they are complete intersections.
For the next conjecture, we adopt the convention that the zero ring $0 = H^{\bullet}(\varnothing)$ satisfies HL.

\begin{conjecture}
\label{hl-conjecture}
For any $J \subseteq [n]$, the quotient ring $\CC[\xx_n] / (I_n : f_J)$ satisfies the Hard Lefschetz property.
\end{conjecture}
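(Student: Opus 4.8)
The plan is to reduce the statement to the classical coinvariant ring $R_n = H^\bullet(\mathrm{Fl}(n))$, where Hard Lefschetz is known, and then to transfer the property across the colon operation. If $1 \in J$ then $\CC[\xx_n]/(I_n:f_J) = 0$ by Lemma~\ref{f-containment-criterion}, and the zero ring satisfies HL by the stated convention, so assume $1 \notin J$. By Theorem~\ref{colon-ideal-identification} together with Lemma~\ref{regular-sequence-lemma}, $A_J := \CC[\xx_n]/(I_n:f_J)$ is a graded Artinian complete intersection, hence Gorenstein, hence (as already observed in the proof of Theorem~\ref{colon-ideal-identification}) a Poincar\'e duality algebra. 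Thus the entire task is to produce a Lefschetz element, and the natural candidate --- by analogy with the Maeno--Numata--Wachi theorem \cite{MNW} that the linear forms with distinct coefficients are precisely the strong Lefschetz elements of $R_n$ --- is the image $\bar\ell$ of a generic linear form $\ell = c_1 x_1 + \cdots + c_n x_n$.

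The structural input is the graded-module isomorphism
\begin{equation}
\CC[\xx_n]/(I_n : f_J) \;\cong\; f_J \cdot R_n \qquad (\text{up to a degree shift by } \deg f_J),
\end{equation}
coming from the exact sequence $0 \to (I_n:f_J)/I_n \to R_n \xrightarrow{\ \cdot f_J\ } R_n$. Since $R_n$ satisfies the strong Lefschetz property, a strong Lefschetz element $\ell$ endows $R_n$ with the structure of an $\mathfrak{sl}_2$-module whose raising operator is $E = (\ell\cdot -)$, with $H$ acting in degree $i$ by $2i - \binom{n}{2}$. Because $R_n$ is commutative, multiplication by $f_J$ commutes with $E$, so $(f_J\cdot -) \in \mathrm{End}(R_n)$ is a highest-weight vector of $H$-weight $2\deg f_J$ for the adjoint action. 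One would then read off, component by component in the $\mathfrak{sl}_2$-isotypic decomposition of $R_n$, the ranks of the composites $\ell^s \circ (f_J\cdot -)$ between graded pieces, aiming to show that $\bar\ell$ is a strong --- and in particular a Hard --- Lefschetz element for $f_J R_n \cong A_J$. If it goes through, this argument would prove the general principle that every Gorenstein colon quotient $\CC[\xx_n]/(\aaa:f)$ of a strong-Lefschetz algebra $\CC[\xx_n]/\aaa$ again has the strong Lefschetz property, with $\bar\ell$ a Lefschetz element.

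The main obstacle is precisely this transfer step. The ideal $f_J R_n \subseteq R_n$ is stable under the raising operator $E$ but not under the lowering operator $F$, so it is not an $\mathfrak{sl}_2$-submodule, and the strong Lefschetz property is not inherited by arbitrary $E$-stable subquotients (Gorenstein algebras failing the Lefschetz property exist and can occur this way). Making the transfer work therefore requires genuine input beyond the $\mathfrak{sl}_2$-formalism: either a general theorem of the form ``a Gorenstein colon quotient of an SLP algebra is SLP'' (a statement that may be accessible within the Lefschetz-properties literature), or --- more realistically --- exploiting the specific structure of $R_n$ and of the factorized polynomials $f_J = \prod_{j\in J} f_{\{j\}}$, perhaps by pushing the Gale-order induction that underlies Lemma~\ref{almost-box-h-basis} and Proposition~\ref{artin-special-case}.

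A parallel route, toward which the concluding discussion of the paper gestures, is geometric. Theorem~\ref{colon-ideal-identification} presents $A_J = \gr\,\II(\NNNN_J)$ via orbit harmonics on the $J$-Nietzschean locus, and the excision of $J$-resentful permutations from $\symm_n$ closely mirrors the GKM-style excision used by Abe--Harada--Horiguchi--Masuda \cite{AHHM} to present $H^\bullet$ of regular nilpotent Hessenberg varieties (with $f_{\{j\}}$ resembling their polynomials $f_{j,i}$). If $A_J$, or a flat degeneration of it, can be identified with the cohomology ring of a variety in the Hessenberg family, then HL follows immediately from \cite[Thm.~12.1]{AHMMS}, despite such varieties being singular in general. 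On either route the crux is the same: to understand the colon ideals $(I_n:f_J)$ well enough to pin down a Lefschetz element --- the very difficulty (inscrutable Gr\"obner theory, no known explicit basis) that already obstructs Conjecture~\ref{artin-conjecture}.
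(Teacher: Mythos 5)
The statement you are trying to prove is Conjecture~\ref{hl-conjecture}, which the paper leaves open: the authors offer no proof, only computational verification for $n \leq 7$ and the suspicion that linear forms $\ell = c_1x_1 + \cdots + c_nx_n$ with distinct coefficients serve as Lefschetz elements. So there is no paper proof to compare against, and your proposal, by its own admission, does not supply one either; it is a plan with the decisive step missing.

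To name the gap concretely: the graded-module identification $\CC[\xx_n]/(I_n:f_J) \cong f_J\cdot R_n$ (shifted by $\deg f_J$) is fine, and it is true that multiplication by $f_J$ commutes with $E = (\ell\cdot -)$, but this only makes $f_J R_n$ an $E$-stable subspace, not an $\mathfrak{sl}_2$-submodule, so the isotypic decomposition of $R_n$ tells you nothing about the ranks of the restricted maps $\ell^{s}|_{f_J R_n}$. What HL for $A_J$ requires is bijectivity of $\ell^{\,d-2i}$ between the degree-$i$ and degree-$(d-i)$ pieces of $f_J R_n$, where $d = \binom{n}{2} - \deg f_J$ is the socle degree of $A_J$; inside $R_n$ these are maps between degrees that are \emph{not} symmetric about $\binom{n}{2}/2$, so even the strong Lefschetz property of $R_n$ (Maeno--Numata--Wachi) only yields maximal rank of $\ell^{\,d-2i}$ on the ambient graded pieces, and maximal rank of a map does not imply bijectivity of its restriction to a subspace of the source onto the corresponding subspace of the target. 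The hoped-for general principle ``a Gorenstein colon quotient of an SLP algebra is SLP'' is not a known theorem and would be very strong (Lefschetz properties are notoriously unstable under passing to ideals, quotients, and even complete intersections); and the alternative Hessenberg route requires identifying $\CC[\xx_n]/(I_n:f_J)$ with the cohomology ring of a regular nilpotent Hessenberg variety (or a member of that family), which is precisely the connection the paper only ``suspects'' and does not establish. So both routes stop exactly where the conjecture starts, and the statement remains open.
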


Conjecture~\ref{hl-conjecture} has been tested for $n \leq 7$.  Computational data suggests that the linear forms
$\ell = c_1 x_1 + \cdots + c_n x_n$ continue to serve as Lefschetz elements, provided $c_1, \dots, c_n \in \CC$ are distinct.
We suspect that the Hodge-Riemann relations hold for $\CC[\xx_n] / (I_n : f_J)$, as well (see \cite[Sec. 12]{AHMMS}).

One of the most aesthetically pleasing aspects of $SR_n$ is its direct extension to general complex reflection groups.
An element $g \in GL_n(\CC)$ is a {\em pseudoreflection} if $g$ is conjugate to a diagonal matrix of the form
$\mathrm{diag}(\zeta,1,\dots,1)$ where $\zeta \in \CC^{\times}$ is a root-of-unity of finite order.
A finite subgroup $G \subseteq GL_n(\CC)$ is a {\em complex reflection group} if $G$ is generated by pseudoreflections.

The natural action of a complex reflection group $G \subseteq GL_n(\CC)$ on $\CC^n$ induces actions of $G$ on 
$\CC[\xx_n]$ and $\Omega_n$ by linear substitutions.
Chevalley proved \cite{Chevalley} that the invariant subring $\CC[\xx_n]^G$ admits a set $f_1, \dots, f_n$ 
of algebraically independent homogeneous generators of positive degrees, so that 
$\CC[\xx_n]^G = \CC[f_1, \dots, f_n]$ is itself a polynomial ring.
Although the $f_i$ are not unique, their degrees $d_1, \dots, d_n$ are uniquely determined by $G$.
Solomon \cite{Solomon} proved that the superspace invariants $(\Omega_n)^G$ are a free $\CC[\xx_n]^G$-module
and described a basis for this module as follows.

\begin{theorem}
\label{solomon-theorem}
{\em (Solomon \cite{Solomon})}
Let $f_1, \dots, f_n \in \CC[\xx_n]^{\symm_n}$ be any list of algebraically independent homogeneous generators
of $\CC[\xx_n]^{\symm_n}$.  The space $(\Omega_n)^{\symm_n}$  is a free module over 
$\CC[\xx_n]^{\symm_n}$ with basis
\begin{equation}
\{ d f_{i_1} \cdots d f_{i_r} \,:\, 0 \leq r \leq n, \, \, 1 \leq i_1 < \cdots < i_r \leq n \}.
\end{equation}
\end{theorem}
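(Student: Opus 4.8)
The plan is to establish directly the three defining properties of a free module with the claimed basis: that each $df_{i_1}\cdots df_{i_r}$ lies in $(\Omega_n)^{\symm_n}$, that these $2^n$ elements are linearly independent over $\CC[\xx_n]^{\symm_n}$, and that they generate $(\Omega_n)^{\symm_n}$ as a $\CC[\xx_n]^{\symm_n}$-module. Invariance is immediate: the Euler operator $d$ commutes with the $\symm_n$-action and each $f_i$ is invariant, so each $df_i$ is invariant, and a product of invariant superspace elements is invariant.

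For linear independence I would pass to the fraction field $\CC(\xx_n)$. Since $df_i=\sum_j\partial_j f_i\cdot\theta_j$, the matrix expressing $df_1,\dots,df_n$ in terms of $\theta_1,\dots,\theta_n$ is the Jacobian $(\partial_j f_i)_{i,j}$, whose determinant is a nonzero polynomial because $f_1,\dots,f_n$ are algebraically independent over $\CC$ and $\CC$ has characteristic zero (concretely, taking the $f_i$ to be power sums one gets a Vandermonde determinant times $n!$). Hence over $\CC(\xx_n)$ the forms $df_1,\dots,df_n$ are a basis of $\bigoplus_j\CC(\xx_n)\,\theta_j$, so the products $df_{i_1}\cdots df_{i_r}$ form a $\CC(\xx_n)$-basis of $\Omega_n\otimes_{\CC[\xx_n]}\CC(\xx_n)$; a fortiori they are linearly independent over the subring $\CC[\xx_n]^{\symm_n}$.

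The substantive step is showing that the free $\CC[\xx_n]^{\symm_n}$-submodule $M\subseteq(\Omega_n)^{\symm_n}$ generated by the $df_{i_1}\cdots df_{i_r}$ is the whole invariant space, and I would deduce this from a Hilbert series comparison. Choose the generators so that $\deg f_i=i$ (allowed: by Chevalley--Shephard--Todd the degree multiset of $\symm_n$ is $\{1,2,\dots,n\}$ regardless of the chosen generating set). Then $df_i$ is bihomogeneous of bidegree $(i-1,1)$, and since $M$ is free with $\Hilb(\CC[\xx_n]^{\symm_n};q)=\prod_{i=1}^n(1-q^i)^{-1}$ we get
\begin{equation*}
\Hilb(M;q,z)=\Big(\sum_{I\subseteq[n]}q^{\sum_{i\in I}(i-1)}z^{|I|}\Big)\prod_{i=1}^n\frac{1}{1-q^i}=\prod_{i=1}^n\frac{1+zq^{i-1}}{1-q^i}.
\end{equation*}
On the other hand Molien's formula gives $\Hilb((\Omega_n)^{\symm_n};q,z)=\frac{1}{n!}\sum_{w\in\symm_n}\det(1+zw)/\det(1-qw)$, with $w$ acting via the permutation representation on $\CC^n$. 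Grouping permutations by cycle type and applying the exponential formula, this average equals $[t^n]\prod_{m\ge0}\frac{1+tzq^m}{1-tq^m}$, which the $q$-binomial theorem identifies with $\prod_{i=1}^n\frac{1+zq^{i-1}}{1-q^i}$. Since $M\subseteq(\Omega_n)^{\symm_n}$ and each bidegree component of $\Omega_n$ is finite dimensional, the equality of Hilbert series forces $M=(\Omega_n)^{\symm_n}$, which finishes the proof.

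I expect the main obstacle to be exactly this Molien evaluation — the generating-function identity $\frac{1}{n!}\sum_{w\in\symm_n}\det(1+zw)/\det(1-qw)=\prod_{i=1}^n(1+zq^{i-1})/(1-q^i)$ — together with the bookkeeping needed to keep the comparison legitimate (finiteness of graded pieces, invariance of the degree multiset under change of generators). An alternative that sidesteps the cycle-type sum is to invoke the isomorphism $\CC[\xx_n]\cong\CC[\xx_n]^{\symm_n}\otimes_\CC H_n$ of graded $\symm_n$-modules (with $H_n\cong\CC[\symm_n]$ ungraded), tensor with $\wedge\{\ttheta_n\}$ over $\CC$, and take invariants to obtain $(\Omega_n)^{\symm_n}\cong\CC[\xx_n]^{\symm_n}\otimes_\CC(H_n\otimes\wedge\{\ttheta_n\})^{\symm_n}$, a free module of rank $\dim(H_n\otimes\wedge\{\ttheta_n\})^{\symm_n}=2^n$; but pinning down the bigraded Hilbert series of $(H_n\otimes\wedge\{\ttheta_n\})^{\symm_n}$ requires the same flavor of symmetric-function identity, so the difficulty is not genuinely avoided.
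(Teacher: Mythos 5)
Your proposal is correct, but it cannot be compared against an argument in the paper because the paper offers none: Theorem~\ref{solomon-theorem} is quoted as a known result and attributed to Solomon \cite{Solomon}, so the "paper's proof" is a citation. Your three steps all check out. Invariance is immediate since $d$ commutes with the $\symm_n$-action; linear independence over $\CC[\xx_n]^{\symm_n}$ follows from nonvanishing of the Jacobian $\det(\partial_j f_i)$, which the characteristic-zero Jacobian criterion gives for any algebraically independent $f_1,\dots,f_n$; and the spanning step is a legitimate bigraded Hilbert series comparison, since the degree multiset $\{1,\dots,n\}$ is independent of the chosen generators, the super-Molien series of $(\Omega_n)^{\symm_n}$ is $\frac{1}{n!}\sum_{w}\det(1+zw)/\det(1-qw)$, a cycle of length $c$ contributes $(1-(-z)^c)/(1-q^c)$, the exponential formula turns the average into $[t^n]\prod_{m\ge 0}(1+tzq^m)/(1-tq^m)$, and the $q$-binomial theorem with $a=-z$ identifies that coefficient with $\prod_{i=1}^n(1+zq^{i-1})/(1-q^i)$, which is exactly the Hilbert series of the free module on your $2^n$ generators. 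It is worth noting how this differs from Solomon's original route: he expands an arbitrary invariant form in the basis $df_{i_1}\cdots df_{i_r}$ over the fraction field and shows the rational coefficients are in fact invariant polynomials, using that the Jacobian is (up to scalar) the product of the reflecting-hyperplane linear forms and that alternants are divisible by it; that argument works uniformly for all (pseudo-)reflection groups, whereas your Molien-plus-$q$-binomial computation is specific to $\symm_n$ (for general $G$ the product formula for the super-Molien series is usually deduced from Solomon's theorem, so your order of deduction would become circular there). For the case the paper actually uses, your argument is self-contained and elementary, at the modest cost of the generating-function bookkeeping you flagged.
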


Solomon's Theorem~\ref{solomon-theorem} describes the space $(\Omega_n)^G$ of 
$G$-invariants as a $\CC[\xx_n]^G$-module. Any fundamental system of invariants $f_1, \dots, f_n \in \CC[\xx_n]^G$
gives rise to a generating set for the $G$-supercoinvariant ideal
$SI_G$ generated by $(\Omega_n)^G_+$.  We have $SI_G = (f_1, \dots, f_n, df_1, \dots, df_n)$ and may use this
presentation to study the quotient $SR_G := \Omega_n / SI_G$ as a bigraded $G$-module.

Solomon used Theorem~\ref{solomon-theorem} to give a uniform proof of the product formula
\begin{equation}
\sum_{g \in G} t^{\dim \mathrm{Fix}(g)} = (t + d_1 - 1) \cdots (t + d_n - 1)
\end{equation}
where $\mathrm{Fix}(g) = \{ v \in \CC^n \,:\, g \cdot v = v \}$ is the fixed subspace of $\CC^n$ attached to $g$.
In type A, this is equivalent to the factorization
\begin{equation}
\sum_{k = 0}^n c(n,k) \cdot t^k = t (t+1) \cdots (t + n - 1)
\end{equation}
where $c(n,k)$ is the Stirling number of the first kind counting permutations $w \in \symm_n$ with $k$ cycles.
On the other hand, the algebra of $SR_n = \Omega_n / SI_n$ is governed by ordered set partitions, which relate to 
Stirling numbers of the {\em second} kind.

Ordered set partitions of $[n]$ are in bijective correspondence with faces in the type A Coxeter complex.
All available
data in types BCD suggests that the fermionic degree $k$ piece of $SR_G := \Omega_n/SI_G$ has dimension equal 
to the number of codimension $k$
faces in the corresponding Coxeter complex (in type A this is a consequence of Theorem~\ref{hilbert-series}).
We also have agreement in type H$_3$.
However, in type F$_4$ these quantities disagree. The bigraded Hilbert series of $SR_{\mathrm{F}_4}$ is given by
\begin{scriptsize}
\begin{multline}
\Hilb(SR_{\mathrm{F}_4};q,z) = \\
\left(
\begin{array}{c}
 1 + 4q + 9 q^2 + 16 q^3 + 25 q^4  + 36 q^5 + 48 q^6 +  60 q^7 +  71 q^8 +  
80 q^9 +  87 q^{10} +  92 q^{11} +  94 q^{12} + \\  92 q^{13} + 87 q^{14} + 80 q^{15} + 71 q^{16} + 60 q^{17} +
 48 q^{18} +  36 q^{19} +  25q^{20} + 16 q^{21} + 9 q^{22} 4 + q^{23} + q^{24}
 \end{array}  \right) \cdot z^0 +  \\ 
 \left(
 \begin{array}{c}
 4  + 15q + 32q^2 + 55q^3 + 84q^4 +  118q^5  +152 q^6 + 182 q^7 + 204 q^8 + 215 q^9
 + 216 q^{10} + 207 q^{11} + \\  188 q^{12}  +  161 q^{13} +  132 q^{14} + 105 q^{15} + 80 q^{16} + 58 q^{17} +
  40 q^{18} + 26 q^{19} + 16 q^{20} + 9 q^{21} + 4 q^{22} + q^{23}
 \end{array}
 \right) \cdot z^1 + \\
 \left(
 \begin{array}{c}
 6 + 20 q + 39 q^2 + 64 q^3 + 95 q^4 +  128 q^5 + 154 q^6 + 168 q^7 +
  164 q^8 +  140 q^9 +  \\ 122 q^{10} + 100 q^{11} +   75 q^{12} + 52 q^{13} + 34 q^{14} + 20 q^{15} + 10 q^{16} + 4 q^{17}
  + q^{18}
 \end{array}
 \right) \cdot z^2  +
 \left(
 \begin{array}{c}
 4  +10 q + 16 q^2 + 25 q^3 + 36 q^4 + 43 q^5  + \\ 44 q^6 +  36  q^7 + 16 q^8 + 9 q^9 +  4 q^{10} + q^{11}
 \end{array}
 \right) \cdot z^3 + z^4
\end{multline}
\end{scriptsize}
and this expression has $q \rightarrow 1$ specialization
\begin{equation}
\Hilb(SR_{\mathrm{F}_4};1,z) =1152 \cdot z^0 + 2304 \cdot z^1 + 1396 \cdot z^2 + 244 \cdot z^3 + z^4.
\end{equation}
This coefficient sequence is almost the same as the reversed $f$-vector 
$(1152, 2304, 1392, 240, 1)$ of the type F$_4$ Coxeter complex, but the coefficients of $z^2$ and $z^3$ are too large by 4.  
Finding a precise invariant-theoretic description of the Hilbert series of $SR_G$ would likely be very interesting.

\section{Acknowledgements}
\label{Acknowledgements}

The authors are grateful to Fran\c cois Bergeron,
Nantel Bergeron, Darij Grinberg, Jim Haglund, Dani\"el Kroes, Yasuhide Numata, Vic Reiner,
 Bruce Sagan,
Josh Swanson, Nolan Wallach, Tianyi Yu, and Mike Zabrocki for many helpful conversations.
B. Rhoades was partially supported by NSF Grant DMS-1953781.

\end{document}